\newtheorem{theorem}{Theorem}[section]
\newtheorem{lemma}[theorem]{Lemma}
\newtheorem{remark}[theorem]{Remark}
\newcommand{\xqedhere}[2]{%
\rlap{\hbox to#1{\hfil\llap{\ensuremath{#2}}}}}
\newcommand{\R}{{\if mm {\rm I}\mkern -3mu{\rm R}\else \leavevmode
\hbox{I}\kern -.17em\hbox{R} \fi}}
\theoremstyle{plain}
\newtheorem{prop}{Proposition}[section]
\newcommand{\bv}{{\mathbf v}}
\def\bx{\bm{x}}
\def\bX{\bm{X}}
\newcommand{\Grad}[1]{\nabla #1}
		\title{ Adaptive time-stepping and maximum-principle preserving Lagrangian schemes for gradient flows }
\date{\today}
\begin{document}

\author{
		Qianqian Liu\thanks{School of Mathematical Sciences; Fudan University, Shanghai, China 200433 ({\tt qianqianliu21@m.fudan.edu.cn})} 
				\and
			Wenbin Chen\thanks{
		Shanghai Key Laboratory for Contemporary Applied Mathematics, School of Mathematical Sciences; Fudan University, Shanghai, China 200433 ({\tt wbchen@fudan.edu.cn})} 
		\and
	Jie Shen\thanks{Eastern Institute of Technology, Ningbo, Zhejiang, China 315200 ({\tt jshen@eitech.edu.cn})} 
	\and 
		Qing Cheng\thanks{Key Laboratory of Intelligent Computing and Application (Tongji University), Ministry of Education, Department of Mathematics; Tongji University, Shanghai, China 200092 ({\tt  qingcheng@tongji.edu.cn})} 
}
	\maketitle
	\begin{abstract}{ 			
	We develop in this paper an adaptive time-stepping approach for gradient flows with distinct treatments for conservative and non-conservative dynamics. 
	For the non-conservative gradient flows  in Lagrangian coordinates, we propose a modified formulation augmented by auxiliary terms to guarantee positivity of the determinant, and  prove that the corresponding adaptive second-order Backward Difference Formulas (BDF2) scheme preserves energy stability and the maximum principle under the time-step ratio constraint  $0<r_n\le r_{\max}\le\frac{3}{2}$. 
	On the other hand,  for the conservative Wasserstein gradient flows in Lagrangian coordinates,  we propose an  adaptive BDF2 scheme  which is shown to be  energy dissipative, and  positivity preserving under the time-step ratio constraint  $0<r_n\le r_{\max}\le\frac{3+\sqrt{17}}{2}$ in 1D and $0<r_n\le r_{\max}\le \frac{5}{4}$ in 2D, respectively. 
	We also present ample numerical simulations in 1D and 2D  to validate the efficiency and accuracy of the proposed schemes.

	  \medskip
	\noindent{\bfseries Keywords.} gradient flow, Lagrangian coordinates, adaptive time stepping, energy dissipation
	
	\medskip
	\noindent{\bfseries Mathematics Subject Classification.}  35K55, 35K65, 65M06, 65M12
		}
	\end{abstract}

	\section{Introduction}

	We consider in this paper adaptive time-stepping Lagrangian approaches for gradient flows with  conservative and non-conservative dynamics. Specifically, the non-conservative models require the enforcement of determinant positivity through auxiliary operators, while the conservative systems need  mechanisms for mass conservation.
	
	For the non-conservative system characterized by energy functional $E(\rho)$ and positive mobility $\mathcal{M}(\rho) > 0$, the governing equation takes the form:
	\begin{equation}\label{eq:non}
		\partial_t\rho = -\mathcal{M}(\rho)\frac{\delta E}{\delta \rho},
	\end{equation}
	where the total mass of $\rho$ is usually not conserved, i.e., $\frac{\mathrm{d}}{\mathrm{d}t}\int_{\Omega}\rho(x,t)\mathrm{d}x \neq 0$. Notable examples include the Allen-Cahn equation \cite{allen1979microscopic}, Burgers-Huxley and Burgers-Fisher equations \cite{wazwaz2009partial}. 
	Through the non-conservative transport framework \cite{cheng2020new}
	\begin{align}\label{eq:non flow}
		\rho_t + \bm{v}\cdot\nabla\rho = 0,
	\end{align}
	the original model \eqref{eq:non} can be reformulated in Eulerian coordinates as:
	\begin{align}\label{eq:ac eu}
		\bm{v}\cdot\nabla\rho = \mathcal{M}(\rho)\frac{\delta E}{\delta \rho}.
	\end{align}
	This Eulerian formulation will subsequently be transformed into an equivalent Lagrangian representation.
	
	On the other hand, a conservative system is governed by
	\begin{equation}\label{eq:wd}
		\partial_t\rho = \nabla\cdot\left(\mathcal{M}(\rho)\nabla\frac{\delta E}{\delta\rho}\right),
	\end{equation}
	which preserves mass, i.e., $\frac{\mathrm{d}}{\mathrm{d}t}\int_{\Omega}\rho(x,t)\mathrm{d}x = 0$. This class of gradient flows  includes well-known models such as the Cahn-Hilliard equation \cite{cahn1958free}, Porous-Medium equation (PME) \cite{vazquez2007porous}, 
    Poisson-Nernst-Planck system \cite{eisenberg2010energy}, 
    and Keller-Segel equation \cite{keller1970initiation}. The conservative model \eqref{eq:wd} can be cast as a  continuity equation in the Eulerian framework \cite{liu2019energetic,liu2020variational,liu2020lagrangian}:
	\begin{align}\label{eq:con flow}
		\partial_t\rho + \nabla\cdot(\rho\bm{v}) = 0,
	\end{align}
	with 
	\begin{align}
		\rho\bm{v} = -\mathcal{M}(\rho)\nabla\frac{\delta E}{\delta \rho}.
	\end{align}
	This conservative formulation will likewise be transformed into its Lagrangian counterpart in subsequent analysis.

A typical example of a non-conservative phase-field model is the Allen-Cahn equation. Extensive research has been dedicated to developing numerical schemes in the Eulerian framework that preserve its key physical properties, especially the energy dissipation law and the maximum bound principle (MBP) \cite{feng2003numerical,shen2010numerical,du2021maximum,du2019maximum,ju2022generalized,ju2022stabilized,li2020arbitrarily,akrivis2022error,akrivis2019energy,hou2023linear,hou2023implicit}.
In contrast to Eulerian methods, Lagrangian approaches offer distinct advantages in interface tracking. The variational Lagrangian framework developed in \cite{liu2020variational} allows  for efficient computation of equilibrium states in Allen-Cahn type models while maintaining sharp interfaces and addressing free boundary problems, while the work  \cite{cheng2020new} introduced a Lagrangian formulation via a non-conservative transport equation, and  rigorously demonstrated the preservation of energy dissipation and MBP. 
	
On the other hand, if $\mathcal{M}(\rho)=\rho$, the conservative model \eqref{eq:wd} reduces to a Wasserstein gradient flow.
Recent advancements in direct numerical methods for Wasserstein gradient flows emphasize the preservation of three fundamental properties: mass conservation, positivity preservation, and energy dissipation \cite{shen2021unconditionally, liu2020lagrangian, li2016maximum, cheng2013positivity}. These methods have been successfully applied to various physical systems, including the porous media equation \cite{zhang2009numerical, duan2019pme, jang2015high}, the Fokker-Planck equation \cite{liu2016entropy, duan2021structure}, the Poisson-Nernst-Planck system \cite{shen2021unconditionally, liu2021positivity, huang2021bound}, and the Keller-Segel model \cite{liu2018positivity, shen2020unconditionally, wang2022fully}.
On the other hand, numerical methods based on the celebrated Jordan-Kinderlehrer-Otto (JKO) scheme \cite{jordan1998variational} have been extensively studied in the literature \cite{carrillo2015finite, carrillo2022primal, carrillo2018lagrangian, li2020fisher, benamou2016augmented, liu2023dynamic, cances2020variational}. A recent study by \cite{cheng2024flow} introduces an innovative flow dynamic approach that reformulates the constrained minimization problem into an unconstrained framework. This transformation facilitates a first-order Lagrangian scheme while preserving the essential properties of the original problem.
	
Since dynamics of gradient flows often behave very differently at different times.
Significant progress has been made in developing adaptive strategies tailored to enhance computational efficiency across a range of gradient flows \cite{calvo1990zero,zhang2013adaptive,qiao2011adaptive,chen2019second,huang2020parallel,hou2023implicit,cao2024exponential}.
	A key advancement is presented in \cite{liao2020second,liao2020energy}, where a novel kernel recombination technique enables the design of nonuniform BDF2 schemes for the Allen-Cahn model in Eulerian coordinates, successfully preserving the MBP property under mild time-step ratio constraints. Within the Wasserstein gradient flow framework, \cite{matthes2019variational} introduces a variational BDF2 method with proven $\frac{1}{2}$-order convergence. This approach is further extended in \cite{gallouet2024geodesic}, which applies geometric extrapolation techniques in Wasserstein space to construct BDF2 numerical methods, with numerical experiments confirming second-order accuracy.
	However, to the best of the authors' knowledge, there is no second-order structure-preserving  method with variable time steps for gradient flows in Lagrangian coordinates. 
	
In this work, we employ distinct flow dynamic approaches introduced in \cite{cheng2020new,cheng2024flow} to develop second-order Lagrangian numerical schemes with adaptive temporal discretization for both non-conservative and conservative systems. Specifically, novel numerical methods for the Allen-Cahn equation and Wasserstein gradient flows are formulated using the non-conservative continuity equation \eqref{eq:non flow} and the conservative continuity equation \eqref{eq:con flow}, respectively. These schemes are shown to preserve the intrinsic properties of the original systems, while the constraints on maximum time-step ratios are rigorously analyzed across various application scenarios.
Compared to conventional Eulerian approaches, the proposed Lagrangian framework offers significant advantages in resolving sharp-interface phenomena and addressing problems involving singularities and free boundaries. The main contributions of this study include:

\begin{itemize}
	\item[(i)] For non conservative models such as the Allen-Cahn equation, we construct a modified BDF2 scheme with variable time steps in Lagrangian coordinates via a regularized flow dynamics framework. This novel strategy introduces a new regularization operator to ensure positive-definite Jacobian determinants. The scheme is rigorously proven to preserve the energy dissipation law and the MBP property under the condition $0 < r_n \leq r_{\max} \leq \frac{3}{2}$.
	
	\item[(ii)] For conservative models written as Wasserstein gradient flows, we develop an adaptive BDF2 scheme in Lagrangian coordinates, which preserves the system’s original properties when the time-step ratio satisfies $0 < r_n \leq r_{\max} \leq \frac{3+\sqrt{17}}{2}$ in 1D and $0 < r_n \leq r_{\max} \leq \frac{5}{4}$ in 2D. 
	
	\item[(iii)] An effective adaptive time-stepping algorithm is implemented for numerical validation. The proposed schemes demonstrate enhanced capabilities in capturing sharp interfaces in the Allen-Cahn system, simulating trajectory evolution, and maintaining determinant positivity through rigorous regularization mechanisms. \end{itemize}

The remainder of the paper is organized as follows. In Section \ref{sec:ac}, we introduce the non-conservative model and present the modified BDF2 scheme with variable time steps for the Allen-Cahn equation, which is both energy-dissipative and MBP-preserving. Section \ref{sec:semi} details and analyzes BDF2 schemes with adaptive time steps for Wasserstein gradient flows, establishing corresponding energy dissipation laws. Numerical experiments in Section \ref{sec:num} validate the theoretical results, utilizing the strategy outlined in Section \ref{sec:strategy}.

 \section{Non-conservative models}\label{sec:ac}

 We shall first consider the adaptive time-stepping method based on the flow dynamic approach for non-conservative models in the section.
 \subsection{Flow map} 
 Given an initial position or a reference configuration ${\bm X}$, and a velocity field ${\bm v}$, recall the flow map ${\bm x}({\bm X},t)$:
 \begin{align}
 &	\frac{\mathrm{d}{\bm x}({\bm X},t)}{\mathrm{d} t}={\bm v}({\bm x}({\bm X},t),t),\label{flow}\\
 &	{\bm x}({\bm X},0)={\bm X},\label{flow1}
 \end{align}
 where ${\bm x}$ represents the Eulerian coordinates and ${\bm X}$ denotes the Lagrangian coordinates, $\frac{\partial {\bm x}}{\partial {\bm X}}$ represents the deformation associated with the flow map.  We assume that ${\bm v}$ is the velocity such that 
\begin{equation}\label{transport}
\rho_t+({\bm v}\cdot\Grad_{\bx})\rho=0.
\end{equation}

\begin{figure}[htbp!]
\centering
\includegraphics[width=0.45\textwidth,clip==]{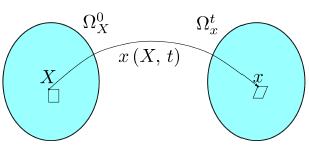}
\caption{A schematic illustration of a flow map $\bm x(\bm X,t)$ at a fixed time $t$: $\bm x(\bm X,t)$ maps $\Omega_0^{\bm X}$ to $\Omega_t^{\bm x}$. $\bm X$ is the Lagrangian coordinate while $\bm x$ is  the Eulerian coordinate, and  $F(\bm X,t)=\frac{\partial \bm x(\bm X,t)}{\partial \bm X}$ represents the deformation associated with the flow map. }\label{flow_map}
\end{figure}

\subsection{Maximum bounds preserving}
Then, the above transport equation and  the flow map  defined in \eqref{flow}-\eqref{flow1}  determine the following  kinematic relationship between Eulerian and Lagrangian coordinates, see also \cite{cheng2020new,cheng2024flow}
\begin{eqnarray}
&&\frac{\mathrm{d}}{\mathrm{d}t}\rho(\bx(\bX,t),t)=\rho_t+({\bm v}\cdot\Grad_{\bx})\rho=0, \label{map:1}
\end{eqnarray}
which leads to 
\begin{equation}\label{essential}
 \rho(\bx(\bX,t),t)=\rho(\bx,0)=\rho_0(\bx) \quad\forall t, 
 \end{equation}
 where $\rho_0(\bx)$ is the  initial condition in the Lagrangian coordinates. Since $\bx(\bX,0)=\bX$, we have $\rho_0(\bx)=\rho_0(\bX)$.
 
 Then we obtain from \eqref{essential}
 \begin{equation}
  \rho(\bx(\bX,t),t)=\rho\circ \bx (\bX) = \rho_0(\bX).
 \end{equation}
 
 Once we have the flow map $\bx(\bX,t)$, we set $\phi(\bX;t)=\bx(\bX,t)$ for each $t$. Then, we derive from \eqref{map:1} that  the solution of \eqref{transport} is given by
\begin{eqnarray}\label{comp}
\rho(\bx,t)=\rho_0 \circ \phi^{-1}(\bx,t)= \rho_0(\phi^{-1}(\bx,t)),
\end{eqnarray}
where  $\phi^{-1}$ is the inverse function of $\phi$, see \cite{cheng2020new,cheng2024flow}.

 Then $\rho(\bx(\bX,t),t)=\rho(\bX,0)$ for all $t$. This indicates that the method preserves the maximum principle, i.e., if the initial value satisfies $0<a\le\rho(\bX,0)\le b$, then for any time $t$, we have  $a\le\rho(\bx,t)\le b$. 
It is also well-known that the Allen-Cahn  equation \eqref{eq:ac ori} with variable mobility  possesses the maximum principle-preserving property \cite{evans1992phase}, which implies that if the initial value satisfies $|\rho(\bx,0)|\le 1$ for all $\bx\in\Omega$, then the solution satisfies $|\rho(\bx,t)|\le 1$ for all $(\bx,t)\in\Omega\times(0,T]$.

 \subsection{$L^2$ gradient flow in Lagrangian coordinate}
We shall take  the Allen-Cahn equation with Dirichlet boundary condition as an example of non-conservative model in this section. The Allen-Cahn equation which is a $L^2$ gradient flow,  originally introduced by Allen and Cahn in \cite{allen1979microscopic} to describe the motion of anti-phase boundaries in crystalline solids, takes  the following form:
\begin{equation}\label{eq:ac ori}
\partial_t\rho=-\mathcal{M}(\rho)(-\epsilon^2\Delta\rho+F'(\rho)),\qquad (x,t)\in\Omega\times(0,T],
\end{equation} 
	 where $\rho$ represents the concentration of one of the two metallic components of the alloy, $\epsilon>0$  is a small parameter reflecting the width of the transition regions, and $T$ is the final time, $\mathcal{M}(\rho)> 0$ is a general mobility function, which can be  $\mathcal{M}(\rho)\equiv1$ or $\mathcal{M}(\rho)=1-\rho^2$, see \cite{hou2023linear}. 
	$F(\rho)=\frac{1}{4}(\rho^2-1)^2$ is the Ginzburg-Landau double-well potential $F(\rho)=\frac{1}{4}(\rho^2-1)^2$ \cite{shen2010numerical}. Equation \eqref{eq:ac ori} can be regarded  as a $L^2$ gradient flow, in the sense that
\begin{equation}\label{diss:law:0}
\frac{\mathrm{d}}{\mathrm{d}t}E(\rho)=-\int_{\Omega}\mathcal{M}(\rho)|\rho_t|^2\mathrm{d}x,
\end{equation}
 in which the energy  is defined by $E(\rho)=\int_{\Omega}\frac{\epsilon^2}{2}|\nabla\rho|^2+F(\rho)\ \mathrm{d}x$. By introducing  the flow map \eqref{transport}, we can reformulate the energy dissipative law \eqref{diss:law:0} into
\begin{align}\label{diss:law:1}
\frac{\mathrm{d}}{\mathrm{d}t}E(\rho)=-\int_{\Omega}\mathcal{M}(\rho)|({\bm v}\cdot\Grad_{\bx})\rho|^2\mathrm{d}x,
\end{align}
where we used the equality $\rho_t=-({\bm v}\cdot\Grad_{\bx})\rho$. By using the least action principle, and Newton's force balance law \cite{cheng2020new,liu2020variational}, we can derive the following trajectory equation in Eulerian coordinates:
 \begin{align}\label{traj}
 {\bm v}\cdot\nabla\rho=\mathcal{M}(\rho)(-\epsilon^2\Delta\rho+F'(\rho)).
 \end{align}

 We rewrite the trajectory equation \eqref{traj}  into  Lagrangian coordinates
 \begin{align}
 x_t(X,t)\rho_0^{\prime}(X)\left(\frac{\partial x}{\partial X}\right)^{-1}{\frac{1}{\mathcal{M}(\rho_0(X))}}=-\epsilon^2\partial_X\left(\rho_0^{\prime}(X)\left(\frac{\partial x}{\partial X}\right)^{-1}\right)\left(\frac{\partial x}{\partial X}\right)^{-1}+F'(\rho_0(X)),\label{ac:tra1}
 \end{align}
 with the following boundary and initial conditions:
 \begin{align}
 x|_{\partial\Omega}=X|_{\partial\Omega},\qquad x(X,0)=X,\ X\in\Omega.
 \end{align}

Notice  that the determinant $\frac{\partial x}{\partial X}(X,t)>0$ should be positive, $\forall X\in\Omega $, see \cite{cheng2020new}.  In the following adaptive time-stepping Lagrangian schemes, we introduce an extra logarithmic term $\eta_0\partial_X\left(\frac{\mathrm{d}}{\mathrm{d}t}\log(\frac{\partial x}{\partial X})\right)$ with $0<\eta_0 \ll 0$ to preserve the positivity of the determinant $\frac{\partial x}{\partial X}(X,t)$.  Then a modified trajectory equation in Lagrangian coordinates for the Allen-Cahn equation with variable mobility is proposed as follows:
 	\begin{equation}\label{ac:regularied tra1}
 	\begin{aligned}
 	& x_t(X,t)(\rho_0^{\prime}(X))^2\left(\frac{\partial x}{\partial X}\right)^{-1}{\frac{1}{\mathcal{M}(\rho_0(X))}}-\eta_0\partial_X\left(\frac{\mathrm{d}}{\mathrm{d}t}\log\left(\frac{\partial x}{\partial X}\right)\right)\\
 	=&-\frac{\epsilon^2}{2}\partial_X\left(\rho_0^{\prime}(X)\left(\frac{\partial x}{\partial X}\right)^{-1}\right)^2+\partial_XF(\rho_0(X)).
 	\end{aligned}
 	\end{equation}
 	
Based on the modified trajectory equation \eqref{ac:regularied tra1}, we can derive the following modified energy dissipative law for the Allen-Cahn equation in Lagrangian coordinate.
 
\begin{lemma}\label{lem:ori energy}
	For the   Allen-Cahn equation \eqref{eq:ac ori} with a general mobility $\mathcal{M}(\rho)>0$, the trajectory equation \eqref{ac:regularied tra1} is energy dissipative in the sense that
	\begin{align}\label{eq: ac 1}
		\frac{\mathrm{d}E_{ac}}{\mathrm{d}t}=-\int_{\Omega_0^{\bm X}}|x_t(X,t)\rho_0^{\prime}(X)(\frac{\partial x}{\partial X})^{-1}|^2\frac{1}{\mathcal{M}(\rho_0(X))}\frac{\partial x}{\partial X}\mathrm{d}X-\int_{\Omega_0^{\bm X}} \eta_0|\partial_Xx_t|^2\left(\frac{\partial x}{\partial X}\right)^{-1}\mathrm{d}X,
	\end{align}
	where 
	$$E_{ac}=\int_{\Omega_0^{\bm X}}\left(\frac{\epsilon^2}{2}|\rho_0^{\prime}(X)(\frac{\partial x}{\partial X})^{-1}|^2+F(\rho_0(X))\right)\frac{\partial x}{\partial X}\mathrm{d}X.$$ 
\end{lemma}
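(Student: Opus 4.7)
The plan is to differentiate the energy $E_{ac}$ directly in Lagrangian coordinates, substitute the modified trajectory equation \eqref{ac:regularied tra1} for the combination of spatial terms that appears, and then convert the regularization term into the quadratic form on the right-hand side via one further integration by parts. Throughout I will abbreviate $J(X,t):=\frac{\partial x}{\partial X}$ and use the identity $J_t=\partial_X x_t$, together with the boundary condition $x|_{\partial\Omega}=X|_{\partial\Omega}$ which forces $x_t=0$ on $\partial\Omega$.

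First I would rewrite the energy as
\begin{equation*}
E_{ac}=\int_{\Omega_0^{\bm X}}\Bigl[\tfrac{\epsilon^2}{2}(\rho_0'(X))^2 J^{-1}+F(\rho_0(X))\,J\Bigr]\,\mathrm{d}X,
\end{equation*}
and differentiate in time, exploiting that $\rho_0$ depends only on $X$. This yields
\begin{equation*}
\frac{\mathrm{d}E_{ac}}{\mathrm{d}t}=\int_{\Omega_0^{\bm X}}\Bigl[-\tfrac{\epsilon^2}{2}(\rho_0'(X))^2 J^{-2}+F(\rho_0(X))\Bigr]\partial_X x_t\,\mathrm{d}X.
\end{equation*}
An integration by parts in $X$, whose boundary contributions vanish because $x_t|_{\partial\Omega}=0$, transfers $\partial_X$ onto the bracket and produces exactly the combination
$\tfrac{\epsilon^2}{2}\partial_X\bigl((\rho_0'(X))^2 J^{-2}\bigr)-\partial_X F(\rho_0(X))$
multiplied by $-x_t$.

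Next I would invoke \eqref{ac:regularied tra1}, which identifies this very combination with
\begin{equation*}
x_t(\rho_0'(X))^2 J^{-1}\frac{1}{\mathcal{M}(\rho_0(X))}-\eta_0\,\partial_X\!\Bigl(\tfrac{\mathrm{d}}{\mathrm{d}t}\log J\Bigr).
\end{equation*}
Substituting, the first piece immediately produces $-\int|x_t|^2(\rho_0'(X))^2 J^{-1}/\mathcal{M}(\rho_0(X))\,\mathrm{d}X$, which is exactly the first term on the right-hand side of \eqref{eq: ac 1} once one notes the trivial identity $(\rho_0'(X))^2 J^{-1}=|\rho_0'(X)J^{-1}|^2 J$. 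For the regularization piece I would use $\tfrac{\mathrm{d}}{\mathrm{d}t}\log J=J^{-1}\partial_X x_t$ and integrate by parts once more (again boundary-term free by the Dirichlet condition on $x_t$), obtaining
\begin{equation*}
-\eta_0\int_{\Omega_0^{\bm X}} x_t\,\partial_X\!\bigl(J^{-1}\partial_X x_t\bigr)\,\mathrm{d}X=\eta_0\int_{\Omega_0^{\bm X}} J^{-1}|\partial_X x_t|^2\,\mathrm{d}X,
\end{equation*}
which, carrying along the sign from the substitution, matches the second term in \eqref{eq: ac 1}.

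The computation is essentially algebraic once the correct identities for $J_t$ and $\tfrac{\mathrm{d}}{\mathrm{d}t}\log J$ are used, so the only delicate step I foresee is the bookkeeping of signs and of the Jacobian factors $J^{\pm 1}$ when passing between the Eulerian-flavored bracket $\tfrac{\epsilon^2}{2}(\rho_0'J^{-1})^2+F(\rho_0)$ and its $J$-weighted Lagrangian form; verifying that $(\rho_0'(X))^2 J^{-1}=|\rho_0'(X)J^{-1}|^2 J$ gives precisely the target expression is what makes the proof work. The Dirichlet condition $x|_{\partial\Omega}=X|_{\partial\Omega}$ is essential for killing both boundary contributions and should be highlighted at the start of the argument.
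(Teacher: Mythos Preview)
Your proposal is correct and follows essentially the same route as the paper: the paper multiplies \eqref{ac:regularied tra1} by $-x_t$, integrates, and recognizes the right-hand side as $\frac{\mathrm d}{\mathrm dt}E_{ac}$ after one integration by parts, which is exactly your computation run in reverse order. Your prose contains two compensating sign slips (after the integration by parts the bracket is multiplied by $+x_t$, not $-x_t$, and correspondingly the trajectory equation identifies that bracket with $-\bigl[x_t(\rho_0')^2J^{-1}/\mathcal{M}-\eta_0\partial_X(\tfrac{\mathrm d}{\mathrm dt}\log J)\bigr]$, not $+$), so when you write it out carefully just track those signs once as you already anticipated.
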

\begin{proof}
Multiplying both sides of \eqref{ac:regularied tra1} by $-x_t$ and using integration by parts leads to 
\begin{equation*}
	\begin{aligned}
		&-\int_{\Omega_0^{\bm X}}\left|x_t\rho_0^{\prime}(X)(\frac{\partial x}{\partial X})^{-1}\right|^2\frac{\partial_{X}x}{\mathcal{M}(\rho_0(X))}\mathrm{d}X+\eta_0\int_{\Omega_0^{\bm X}}\partial_{X}\left(\frac{\mathrm{d}}{\mathrm{d}t}\log\left(\frac{\partial x}{\partial X}\right)\right)x_t\mathrm{d}X\\
		=&\frac{\epsilon^2}{2}\int_{\Omega_0^{\bm X}}\partial_X\left(\rho_0^{\prime}(X)\left(\frac{\partial x}{\partial X}\right)^{-1}\right)^2x_t-\partial_XF(\rho_0(X))x_t\mathrm{d}X\\
		=&-\frac{\epsilon^2}{2}\int_{\Omega_0^{\bm X}}\left(\rho_0^{\prime}(X)\left(\frac{\partial x}{\partial X}\right)^{-1}\right)^2\partial_Xx_t+F(\rho_0(X))\partial_Xx_t\mathrm{d}X\\
		=&\frac{\mathrm{d}}{\mathrm{d}t}\int_{\Omega_0^{\bm X}}\left(\frac{\epsilon^2}{2}|\rho_0^{\prime}(X)\left(\frac{\partial x}{\partial X}\right)^{-1}|^2+F(\rho_0(X))\right)\frac{\partial x}{\partial X}\mathrm{d}X.
	\end{aligned}
\end{equation*}
The second term on the left hand side of above equality can be rewritten by
\begin{equation*}
	\begin{aligned}
	\int_{\Omega_0^{\bm X}}\partial_{X}\left(\frac{\mathrm{d}}{\mathrm{d}t}\log\left(\frac{\partial x}{\partial X}\right)\right)x_t\mathrm{d}X=-\int_{\Omega_0^{\bm X}}\frac{\mathrm{d}}{\mathrm{d}t}\log\left(\frac{\partial x}{\partial X}\right)\partial_{X}x_t\mathrm{d}X=\int_{\Omega_0^{\bm X}} |\partial_Xx_t|^2\left(\frac{\partial x}{\partial X}\right)^{-1}\mathrm{d}X.
	\end{aligned}
\end{equation*}
Then a combination of above estimations yields the energy dissipation law \eqref{eq: ac 1}.
\end{proof}

 \subsection{Adaptive time-stepping Lagrangian scheme}
 Given time-step $\tau_{n}:=t^{n}-t^{n-1}>0$, and define the  time-step ratio $r_{n+1}:=\frac{\tau_{n+1}}{\tau_{n}}>0$, $n\ge 1$,  assume that the time-step ratio $\{r_n\}_n$ is uniformly bounded with an upper bound $r_{\max}$ such that $r_n\le r_{\max}$, $\forall n$. For any given final time $T>0$, $T=\sum_{n=1}^N\tau_{n}$.  Denote ${ x}^n$  the numerical approximation to ${ x}(\cdot, t^n)$.

 The primary challenge lies in approximating the non-constant mobility term  $\rho_0^{\prime}(X)\left(\frac{\partial x}{\partial X}\right)^{-1}\frac{1}{\mathcal{M}(\rho_0(X))}$ in \eqref{ac:regularied tra1} for variable time-stepping methods  that can also preserve energy stability in the discrete level. 
 By introducing  $\ell_n:=\frac{\partial x^n}{\partial X}\mathcal{M}(\rho_0(X))$, we propose  the following  second-order adaptive time-stepping  scheme for \eqref{ac:regularied tra1}:
  $\forall n\ge1$, given ${ x}^{n-1},\ { x}^n$ and $\rho({X},0)$, solve $({ x}^{n+1},\rho^{n+1})$  from
  \begin{equation}
 \begin{aligned}
 &\frac{(2r_{n+1}+1)(\rho_0^{\prime}(X))^2}{2\tau_{n+1}(r_{n+1}+1)}(\ell_{n+1}^{-1}+\ell_n^{-1})(x^{n+1}-x^n) -\eta\tau_{n+1}\partial_X\left(\log\frac{\partial x^{n+1}}{\partial X}-\log\frac{\partial x^{n}}{\partial X}\right)\\
 &-\frac{r_{n+1}^2(\rho_0^{\prime}(X))^2}{\tau_{n+1}(r_{n+1}+1)}((1+\frac{1}{2r_{n+1}})\ell_{n}^{-\frac{1}{2}}-\frac{1}{2r_{n+1}}\ell_{n+1}^{-\frac{1}{2}})(\frac{1}{2}\ell_{n-1}^{-\frac{1}{2}}+\frac{1}{2}\ell_{n}^{-\frac{1}{2}})(x^{n}-x^{n-1})\\
 =&-\frac{\epsilon^2}{2}\partial_X\left(\rho_0^{\prime}(X)\left(\frac{\partial x^{n+1}}{\partial X}\right)^{-1}\right)^2+\partial_XF(\rho_0(X)).\label{scheme:bdf2 modified variable}
 \end{aligned}
 \end{equation}
 
 \begin{remark}
Notice that $\frac{1}{2\tau_{n+1}}(\ell_{n+1}^{-1}+\ell_n^{-1})(x^{n+1}-x^n)$ and $\frac{1}{\tau_n}((1+\frac{1}{2r_{n+1}})\ell_{n}^{-\frac{1}{2}}-\frac{1}{2r_{n+1}}\ell_{n+1}^{-\frac{1}{2}})(\frac{1}{2}\ell_{n-1}^{-\frac{1}{2}}+\frac{1}{2}\ell_{n}^{-\frac{1}{2}})(x^{n}-x^{n-1})$ are second-order approximations to $\ell_{n+\frac{1}{2}}^{-1}x_t|_{n+\frac{1}{2}}$ and $\ell^{-1}_{n-\frac{1}{2}}x_t|_{n-\frac{1}{2}}$, respectively.  Then the linear combination $\frac{2r_{n+1}+1}{r_{n+1}+1}\ell^{-1}_{n+\frac{1}{2}}x_t|_{n+\frac{1}{2}}-\frac{r_{n+1}}{r_{n+1}+1}\ell^{-1}_{n-\frac{1}{2}}x_t|_{n-\frac{1}{2}}$ is a second-order approximation to $\ell^{-1}_{n+1}x_t|_{n+1}$, see  \cite{wang2022fully} . 
\end{remark} 
 
 The artificial regularization term $-\eta\tau_{n+1}\partial_X\left(\log\frac{\partial x^{n+1}}{\partial X}-\log\frac{\partial x^{n}}{\partial X}\right)$ with $\eta\ge 0$ is introduced in \eqref{scheme:bdf2 modified variable} to preserve the positivity of $\frac{\partial x^{n+1}}{\partial X}$ at the next time step. 
 	Once the trajectory $x^{n+1}$ is determined from \eqref{scheme:bdf2 modified variable}, the solution to Allen-Cahn equation can be obtained from $\rho(x^{n+1}(X))=\rho_0(X)$. 
 	
 	We  prove below that the scheme  \eqref{scheme:bdf2 modified variable} is energy stable with variable time steps.
 \begin{theorem}
 		For the Allen-Cahn equation \eqref{eq:ac ori} with a general mobility $\mathcal{M}(\rho)>0$, if the maximum time-step ratio satisfies $0<r_{\max}\le\frac{3}{2}$, then the second-order scheme  \eqref{scheme:bdf2 modified variable} is energy dissipative in the sense that
 	\begin{equation}
 	\begin{aligned}
 	&E^{n+1}+\frac{r_{\max}}{2\tau_{n+1}(r_{\max}+1)}\left((\rho_0^{\prime}(X))^2(\ell_{n+1}^{-1}+\ell_{n}^{-1})(x^{n+1}-x^{n}),x^{n+1}-x^{n}\right)\\
 	&+\eta\tau_{n+1}\left(\log\frac{\partial x^{n+1}}{\partial X}-\log\frac{\partial x^{n}}{\partial X},\frac{\partial x^{n+1}}{\partial X}-\frac{\partial x^{n}}{\partial X}\right)\\
 	\le &E^{n}+\frac{r_{\max}}{2\tau_{n}(r_{\max}+1)}\left((\rho_0^{\prime}(X))^2(\ell_{n-1}^{-1}+\ell_{n}^{-1})(x^{n}-x^{n-1}),x^{n}-x^{n-1}\right),
 	\end{aligned}
 	\end{equation}
 	 	where the energy is  defined by $E^n=\int_{\Omega_0^{\bm X}}\left(\frac{\epsilon^2}{2}|\rho_0^{\prime}(X)(\frac{\partial x^n}{\partial X})^{-1}|^2+F(\rho_0(X))\right)\frac{\partial x^n}{\partial X}\mathrm{d}X$.
 \end{theorem}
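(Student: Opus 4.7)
The plan is to mirror the continuous dissipation argument of Lemma \ref{lem:ori energy} at the fully-discrete level: test \eqref{scheme:bdf2 modified variable} against $-(x^{n+1}-x^n)$, integrate over $\Omega_0^{\bm X}$, and invoke integration by parts. The boundary terms vanish because the Dirichlet condition $x^k|_{\partial\Omega}=X|_{\partial\Omega}$ forces $(x^{n+1}-x^n)|_{\partial\Omega}=0$. I will treat the right-hand side of the scheme, the regularization term, and the BDF2 quadratic form separately, and then combine them.

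For the right-hand side of \eqref{scheme:bdf2 modified variable}, integration by parts recasts it in the form $\int_{\Omega_0^{\bm X}} [-\tfrac{\epsilon^2}{2}(\rho_0')^2 (\tfrac{\partial x^{n+1}}{\partial X})^{-2}+F(\rho_0)]\,(\tfrac{\partial x^{n+1}}{\partial X}-\tfrac{\partial x^n}{\partial X})\,dX$, exactly as in the continuous proof of Lemma \ref{lem:ori energy}. The convexity of $u\mapsto u^{-1}$ on $(0,\infty)$ gives the pointwise inequality $(\tfrac{\partial x^{n+1}}{\partial X})^{-1}-(\tfrac{\partial x^n}{\partial X})^{-1}\le -(\tfrac{\partial x^{n+1}}{\partial X})^{-2}(\tfrac{\partial x^{n+1}}{\partial X}-\tfrac{\partial x^n}{\partial X})$, so the above integral is bounded below by $E^{n+1}-E^n$. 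For the regularization term, integration by parts yields $\eta\tau_{n+1}\int\partial_X(\log\tfrac{\partial x^{n+1}}{\partial X}-\log\tfrac{\partial x^n}{\partial X})(x^{n+1}-x^n)\,dX=-\eta\tau_{n+1}(\log\tfrac{\partial x^{n+1}}{\partial X}-\log\tfrac{\partial x^n}{\partial X},\,\tfrac{\partial x^{n+1}}{\partial X}-\tfrac{\partial x^n}{\partial X})$, which supplies exactly the $\eta$-term that appears on the left of the claimed inequality (nonnegative by monotonicity of $\log$).

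The decisive step is the analysis of the BDF2 quadratic form generated by the first and third terms of the scheme's LHS. Writing these tested against $-(x^{n+1}-x^n)$ and integrated as $\int (A\,a^2-B\,a\,b)\,dX$ with $a:=x^{n+1}-x^n$, $b:=x^n-x^{n-1}$, $r:=r_{n+1}$, and substituting $\tau_n=\tau_{n+1}/r$, the theorem reduces to the pointwise estimate
\begin{equation*}
A\,a^2-B\,a\,b \;\ge\; \frac{r_{\max}(\rho_0')^2(\ell_{n+1}^{-1}+\ell_n^{-1})}{2\tau_{n+1}(r_{\max}+1)}\,a^2 \;-\; \frac{r\,r_{\max}(\rho_0')^2(\ell_{n-1}^{-1}+\ell_n^{-1})}{2\tau_{n+1}(r_{\max}+1)}\,b^2,
\end{equation*}
equivalently the nonnegativity of the quadratic form $P\,a^2-B\,a\,b+Q\,b^2$, where $P$ equals $A$ minus the $a^2$-coefficient on the right and $Q$ is the $b^2$-coefficient on the right. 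A direct computation gives $P>0$ for all $r,r_{\max}>0$, so the binding condition is the discriminant $B^2\le 4PQ$. I would control the extrapolation factor $(1+\tfrac{1}{2r})\ell_n^{-1/2}-\tfrac{1}{2r}\ell_{n+1}^{-1/2}$ and the average factor $\tfrac12(\ell_{n-1}^{-1/2}+\ell_n^{-1/2})$ in $B$ by weighted Cauchy--Schwarz $(\alpha p+\beta q)^2\le(\alpha+\beta)(\alpha p^2+\beta q^2)$, together with $(\alpha p-\beta q)^2\le(\alpha p+\beta q)^2$ and the elementary AM--GM inequality $2(\ell_i\ell_j)^{-1/2}\le\ell_i^{-1}+\ell_j^{-1}$, thereby reducing $B^2\le 4PQ$ to a purely algebraic inequality in $(r,r_{\max})$ satisfied under $0<r\le r_{\max}\le\tfrac32$.

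The main obstacle is this final step: the Young-type splits used to estimate $B^2$ must be tuned with weights depending on $(r,r_{\max})$ so that the resulting bound is tight uniformly over all admissible Jacobian factors $(\ell_{n-1},\ell_n,\ell_{n+1})>0$, and calibrating coefficients so that the critical case collapses exactly to $r_{\max}=\tfrac32$ is where most of the algebraic care resides. Once the pointwise quadratic-form inequality is established, summing the three estimates and rearranging yields the stated modified-energy inequality.
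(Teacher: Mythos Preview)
Your overall strategy matches the paper: test \eqref{scheme:bdf2 modified variable} against $-(x^{n+1}-x^n)$, handle the right-hand side by integration by parts together with the convexity of $u\mapsto u^{-1}$, and treat the regularization term by integration by parts (the paper phrases the nonnegativity via the mean value theorem rather than monotonicity of $\log$, but the content is identical). The substantive difference is in the BDF2 quadratic form, which you cast as a discriminant condition $B^2\le 4PQ$ to be verified uniformly in the Jacobian factors $\ell_{n-1},\ell_n,\ell_{n+1}$---and rightly flag as the crux.

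The paper does not pursue the discriminant route. Instead it applies a \emph{fixed} Young split $uv\le\tfrac14 u^2+v^2$, with $u=[(1+\tfrac{1}{2r})\ell_n^{-1/2}-\tfrac{1}{2r}\ell_{n+1}^{-1/2}](x^{n+1}-x^n)$ and $v=\tfrac12(\ell_{n-1}^{-1/2}+\ell_n^{-1/2})(x^n-x^{n-1})$, and then controls $u^2$ by exactly the weighted Cauchy--Schwarz $(\alpha p-\beta q)^2\le(\alpha+\beta)(\alpha p^2+\beta q^2)$ you mention, and $v^2$ by $(\tfrac12 p+\tfrac12 q)^2\le\tfrac12(p^2+q^2)$. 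After dropping one manifestly nonpositive term, this produces the clean coefficient $\frac{(2r+1)(3-r)}{8\tau_{n+1}(r+1)}$ in front of the $(x^{n+1}-x^n)$ quadratic, and the stability condition collapses to the elementary one-variable check
\[
h(r):=\frac{(2r+1)(3-r)}{8(r+1)}\;\ge\;\frac{r_{\max}}{2(r_{\max}+1)}\quad\text{for all }0<r\le r_{\max},
\]
which (since $h$ is unimodal on $(0,3)$ and $h(0)=\tfrac38$) reduces to $h(r_{\max})\ge \frac{r_{\max}}{2(r_{\max}+1)}$, i.e.\ $2r_{\max}^2-r_{\max}-3\le 0$, giving $r_{\max}\le\tfrac32$. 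Your discriminant analysis would in principle be at least as sharp, but it forces you to handle the $\ell$-dependence inside $B^2\le 4PQ$ simultaneously; the paper's fixed $\tfrac14$--$1$ split decouples the $\ell$-estimates from the $(r,r_{\max})$ algebra and is what makes the computation close neatly at $\tfrac32$.
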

 \begin{proof}
 		We take inner product of \eqref{scheme:bdf2 modified variable} with respect to $-(x^{n+1}-x^n)$, and then estimate each term of the following equality:
\begin{equation}\label{energy,eq1}
	 	\begin{aligned}
	&\sum_{i=1}^3L_i:=-\frac{2r_{n+1}+1}{2\tau_{n+1}(r_{n+1}+1)}\left((\rho_0^{\prime}(X))^2(\ell_{n+1}^{-1}+\ell_n^{-1})(x^{n+1}-x^n),x^{n+1}-x^n\right)\\
	&+\left(\eta\tau_{n+1}\partial_X\left(\log\frac{\partial x^{n+1}}{\partial X}-\log\frac{\partial x^{n}}{\partial X}\right),x^{n+1}-x^n\right)+\frac{r_{n+1}^2}{\tau_{n+1}(r_{n+1}+1)}\\
	&\times\left((\rho_0^{\prime}(X))^2((\frac{2r_{n+1}+1}{2r_{n+1}})\ell_{n}^{-\frac{1}{2}}-\frac{1}{2r_{n+1}}\ell_{n+1}^{-\frac{1}{2}})(\frac{1}{2}\ell_{n-1}^{-\frac{1}{2}}+\frac{1}{2}\ell_{n}^{-\frac{1}{2}})(x^{n}-x^{n-1}),x^{n+1}-x^{n}\right)\\
	=&\frac{\epsilon^2}{2}\left(\partial_{\bm X}\Big|\rho_0^{\prime}(X)(\frac{\partial x^{n+1}}{\partial X})^{-1}\Big|^2,x^{n+1}-x^{n}\right)-\left(\partial_XF(\rho_0(X)),x^{n+1}-x^{n}\right):=\sum_{i=1}^2R_i.
	\end{aligned}
\end{equation}
 	By repeatedly applying the Cauchy-Schwarz inequality to estimate $L_3$, it can be shown that 
 	 	\begin{align*}
 	&\left((\rho_0^{\prime}(X))^2((1+\frac{1}{2r_{n+1}})\ell_{n}^{-\frac{1}{2}}-\frac{1}{2r_{n+1}}\ell_{n+1}^{-\frac{1}{2}})(\frac{1}{2}\ell_{n-1}^{-\frac{1}{2}}+\frac{1}{2}\ell_{n}^{-\frac{1}{2}})(x^{n}-x^{n-1}),x^{n+1}-x^{n}\right)\\
 	\le& \frac{1}{4}\left((\rho_0^{\prime}(X))^2((1+\frac{1}{2r_{n+1}})\ell_{n}^{-\frac{1}{2}}-\frac{1}{2r_{n+1}}\ell_{n+1}^{-\frac{1}{2}})^2(x^{n+1}-x^{n}),x^{n+1}-x^{n}\right)\\
 	&+\left((\rho_0^{\prime}(X))^2(\frac{1}{2}\ell_{n-1}^{-\frac{1}{2}}+\frac{1}{2}\ell_{n}^{-\frac{1}{2}})^2(x^{n}-x^{n-1}),x^{n}-x^{n-1}\right)\\
 	\le &\frac{1}{4}\left((\rho_0^{\prime}(X))^2(\frac{(2r_{n+1}+1)^2+2r_{n+1}+1}{4r_{n+1}^2}\ell_{n}^{-1}+\frac{2r_{n+1}+2}{4r_{n+1}^2}\ell_{n+1}^{-1})(x^{n+1}-x^{n}),x^{n+1}-x^{n}\right)\\
 	&+\frac{1}{2}\left((\rho_0^{\prime}(X))^2(\ell_{n-1}^{-1}+\ell_{n}^{-1})(x^{n}-x^{n-1}),x^{n}-x^{n-1}\right)\\
 	= &\frac{1}{4}\frac{(2r_{n+1}+1)^2+2r_{n+1}+1}{4r_{n+1}^2}\left((\rho_0^{\prime}(X))^2(\ell_{n}^{-1}+\ell_{n+1}^{-1})(x^{n+1}-x^{n}),x^{n+1}-x^{n}\right)\\
 	&-\frac{1}{4}\frac{(2r_{n+1}+1)^2-1}{4r_{n+1}^2}\left((\rho_0^{\prime}(X))^2\ell_{n+1}^{-1}(x^{n+1}-x^{n}),x^{n+1}-x^{n}\right)\\
 	&+\frac{1}{2}\left((\rho_0^{\prime}(X))^2(\ell_{n-1}^{-1}+\ell_{n}^{-1})(x^{n}-x^{n-1}),x^{n}-x^{n-1}\right).
 	\end{align*}
  SInce $(2r_{n+1}+1)^2-1>0$ $\forall r_{n+1}>0$, we have
	 	\begin{equation}\label{energy,eql1}
	\begin{aligned}
	L_1+L_3
	\le&-\frac{2r_{n+1}+1}{2\tau_{n+1}(r_{n+1}+1)}\left((\rho_0^{\prime}(X))^2(\ell_{n+1}^{-1}+\ell_n^{-1})(x^{n+1}-x^n),x^{n+1}-x^n\right)\\
	&+\frac{(2r_{n+1}+1)^2+2r_{n+1}+1}{16\tau_{n+1}(r_{n+1}+1)}\left((\rho_0^{\prime}(X))^2(\ell_{n}^{-1}+\ell_{n+1}^{-1})(x^{n+1}-x^{n}),x^{n+1}-x^{n}\right)\\
	&+\frac{r_{n+1}^2}{2\tau_{n+1}(r_{n+1}+1)}\left((\rho_0^{\prime}(X))^2(\ell_{n-1}^{-1}+\ell_{n}^{-1})(x^{n}-x^{n-1}),x^{n}-x^{n-1}\right)\\
	=&-\frac{(2r_{n+1}+1)(3-r_{n+1})}{8\tau_{n+1}(r_{n+1}+1)}\left((\rho_0^{\prime}(X))^2(\ell_{n+1}^{-1}+\ell_n^{-1})(x^{n+1}-x^n),x^{n+1}-x^n\right)\\
	&+\frac{r_{n+1}^2}{2\tau_{n+1}(r_{n+1}+1)}\left((\rho_0^{\prime}(X))^2(\ell_{n-1}^{-1}+\ell_{n}^{-1})(x^{n}-x^{n-1}),x^{n}-x^{n-1}\right).
	\end{aligned}
	\end{equation}
 		Using the integration by parts and the mean value theorem 
 	yields the following estimate for $L_2$: 
 	\begin{equation}
 	\begin{aligned}
 	L_2&=-\eta\tau_{n+1}\left(\log\frac{\partial x^{n+1}}{\partial X}-\log\frac{\partial x^{n}}{\partial X},\frac{\partial x^{n+1}}{\partial X}-\frac{\partial x^{n}}{\partial X}\right)\\
 	&=-\eta\tau_{n+1}\left(\left(\frac{\partial \xi}{\partial X}\right)^{-1},\left(\frac{\partial x^{n+1}}{\partial X}-\frac{\partial x^{n}}{\partial X}\right)^2\right)\le 0,
 	\end{aligned}
 	\end{equation} 
 	where $\xi$ lies between $x^n$ and $x^{n+1}$.
 	The first term on the right hand side of \eqref{energy,eq1} can be estimated by applying the integration by parts and the convexity of $\frac{1}{y}$ with respect to $y$ with $y>0$:
 	\begin{equation}
 	\begin{aligned}
 	R_1=&-\frac{\epsilon^2}{2}\left(\Big|\rho_0^{\prime}(X)(\frac{\partial x^{n+1}}{\partial X})^{-1}\Big|^2,\partial_{\bm X}(x^{n+1}-x^{n})\right)\\
 	\ge&\ \frac{\epsilon^2}{2}\int_{\Omega_0^{\bm X}}(\rho_0^{\prime}(X))^2(\frac{\partial x^{n+1}}{\partial X})^{-1}\mathrm{d} X-\frac{\epsilon^2}{2}\int_{\Omega_0^{\bm X}}(\rho_0^{\prime}(X))^2(\frac{\partial x^{n}}{\partial X})^{-1}\mathrm{d} X.
 	\end{aligned}
 	\end{equation}
 	Similarly, the last term on the right hand side of \eqref{energy,eq1} can be rewritten as follows
 	\begin{equation}\label{energy,eqr2}
 	\begin{aligned}
 	R_2
 	=-\int_{\Omega_0^{\bm X}}\partial_{\bm X}F(\rho_0(X))(x^{n+1}-x^{n})\mathrm{d} X
 	=\int_{\Omega_0^{\bm X}}F(\rho_0(X))\partial_{\bm X}(x^{n+1}-x^{n})\mathrm{d} X.
 	\end{aligned}
 	\end{equation}
 	Substituting \eqref{energy,eql1}-\eqref{energy,eqr2} into \eqref{energy,eq1} shows that
 	\begin{align*}
 	&E^{n+1}+\frac{(2r_{n+1}+1)(3-r_{n+1})}{8\tau_{n+1}(r_{n+1}+1)}\left((\rho_0^{\prime}(X)(\ell_{n+1}^{-1}+\ell_n^{-1})(x^{n+1}-x^n),\rho_0^{\prime}(X)(x^{n+1}-x^n)\right)\\
 	&+\eta\tau_{n+1}\left(\log\frac{\partial x^{n+1}}{\partial X}-\log\frac{\partial x^{n}}{\partial X},\frac{\partial x^{n+1}}{\partial X}-\frac{\partial x^{n}}{\partial X}\right)\\
 	\le &E^{n}+\frac{r_{n+1}^2}{2\tau_{n+1}(r_{n+1}+1)}\left((\rho_0^{\prime}(X))^2(\ell_{n-1}^{-1}+\ell_{n}^{-1})(x^{n}-x^{n-1}),(x^{n}-x^{n-1})\right)\\
 	\le & E^{n}+\frac{r_{\max}}{2\tau_{n}(r_{\max}+1)}\left((\rho_0^{\prime}(X))^2(\ell_{n-1}^{-1}+\ell_{n}^{-1})(x^{n}-x^{n-1}),(x^{n}-x^{n-1})\right).
 	\end{align*}
 	Then the energy dissipation law will be derived once the following inequality holds:
 	\begin{align*}
 	\frac{(2r_{n+1}+1)(3-r_{n+1})}{8(r_{n+1}+1)}\ge \frac{r_{\max}}{2(r_{\max}+1)},
 	\end{align*}
 	Denote function $h(s)=\frac{(2s+1)(3-s)}{8(s+1)}$ for $0<s<3$, since $h(s)$ is increasing in the range $0<s<-1+\sqrt{2}$, and decreasing in the range $-1+\sqrt{2}<s\le r_{\max}<3$. Then if the maximum time-step ratio satisfies $0<r_{\max}\le\frac{3}{2}$, we have $2r_{\max}^2-r_{\max}-3\le 0$ and
 	\begin{align*}
 	h(s)\ge \min\{h(0),h(r_{\max})\}=\min\left\{\frac{3}{8},	\frac{(2r_{\max}+1)(3-r_{\max})}{8(r_{\max}+1)}\right\}\ge \frac{r_{\max}}{2(r_{\max}+1)},
 	\end{align*}
 	which completes the proof.
 \end{proof}
 \begin{remark}
 	If we set the time-step ratio $r_{n}\equiv 1$, $\forall n$, the scheme \eqref{scheme:bdf2 modified variable}  reduces to a second-order numerical scheme with a fixed time step. In this case, the energy dissipation law will also hold
 	 	\begin{align*}
 	&E^{n+1}+\frac{3}{8\tau}\left((\rho_0^{\prime}(X))^2(\ell_{n+1}^{-1}+\ell_n^{-1})(x^{n+1}-x^n),(x^{n+1}-x^n)\right)\\
 	\le &E^{n}+\frac{1}{4\tau}\left((\rho_0^{\prime}(X))^2(\ell_{n-1}^{-1}+\ell_n^{-1})(x^{n}-x^{n-1}),(x^{n}-x^{n-1})\right).
 	\end{align*}
 \end{remark}
\begin{remark}
	 	The standard BDF2 numerical scheme with variable time steps can also be proposed:
	\begin{equation}
	\begin{aligned}
	&\frac{(1+2r_{n+1})x^{n+1}-(1+r_{n+1})^2x^{n}+r_{n+1}^2x^{n-1}}{\tau_{n+1}(1+r_{n+1})}\rho_0^{\prime}(X)\left(\frac{\partial x^{n+1}_\star}{\partial X}\right)^{-1}\\
	=&-\epsilon^2\partial_X\left(\rho_0^{\prime}(X)\left(\frac{\partial x^{n+1}}{\partial X}\right)^{-1}\right)\left(\frac{\partial x^{n+1}}{\partial X}\right)^{-1}+F'(\rho_0(X)),\label{scheme:ac 2nd}
	\end{aligned}
	\end{equation}
	where $\frac{\partial x_{\star}^{n+1}}{\partial X}$ is a second-order extrapolation term defined by
	\begin{equation*}
		\frac{\partial x_{\star}^{n+1}}{\partial X}=\begin{cases}
			\frac{\partial ((1+r_{n+1})x^{n}-r_{n+1}x^{n-1})}{\partial X},&\ \text{if}\ \frac{\partial x^n}{\partial X}>\frac{\partial x^{n-1}}{\partial X},\\
				\frac{1}{(1+r_{n+1})/\frac{\partial x^n}{\partial X} -r_{n+1}/\frac{\partial x^{n-1}}{\partial X}},&\ \text{if}\ \frac{\partial x^n}{\partial X}<\frac{\partial x^{n-1}}{\partial X}.
		\end{cases}
	\end{equation*} 
	The scheme \eqref{scheme:ac 2nd} is positivity-preserving for $\frac{\partial x^{n+1}}{\partial X}$ and MBP-preserving,  a proof of which follows similarly from previous works \cite{cheng2020new}. This idea has also been applied to the Keller-Segel equation \cite{wang2022fully}, the Cahn-Hilliard equation \cite{liu2023positivity} and the PME \cite{duan2019pme,liu2023envara}. 
	However, as discussed in \cite{wang2022fully,shen2020unconditionally}, the energy stability of \eqref{scheme:ac 2nd} 	is not easy to be proved due to the non-constant term $(\frac{\partial x^{n+1}_\star}{\partial X})^{-1}$. The advantage of the proposed scheme \eqref{scheme:bdf2 modified variable} is that its energy stability can be theoretically established, in contrast to \eqref{scheme:ac 2nd}.
 \end{remark}

\section{Mass conservative models}\label{sec:semi}
In this section, we explore adaptive time-stepping methods for mass-conservative models, using Wasserstein gradient flows as illustrative examples. If the variable $\rho$ satisfies the conservative transport equation \eqref{eq:con flow}, the mass conservation law is inherently fulfilled.

\subsection{Wasserstein gradient flows in Lagrangian coordinate}
The conservative model  \eqref{eq:wd} with $\mathcal{M}(\rho)=\rho$ can be written as Wasserstein gradient flows \cite{benamou2016augmented,carrillo2003kinetic}
\begin{align}\label{eq:wasserstein}
\begin{split}
	\partial_t\rho=\nabla\cdot(\rho\bv),\quad \bm{v}=\nabla\frac{\delta E}{\delta \rho},
	\end{split}
\end{align}
where the  energy  is defined by $E(\rho)=\int_{\Omega}F(\rho)\mathrm{d}{\bm x}
 =\int_{\Omega}U(\rho({\bm x}))+V({\bm x})\rho({\bm x})\mathrm{d}{\bm x}+\frac{1}{2}\int_{\Omega\times\Omega}W({\bm x}-{\bm y})\rho({\bm x})\rho({\bm y})\mathrm{d}{\bm x}\mathrm{d}{\bm y}$.  $F(\rho)$ is the energy density, $U(\rho)$ is an internal energy density, $V({\bm x})$ is a drift potential and $W({\bm x},{\bm y})=W({\bm y},{\bm x})$ is an interaction potential . 
The solution to \eqref{eq:wasserstein} satisfies the nice properties,  positivity-preserving, mass conservative and energy dissipative.  As discussed in \cite{cheng2024flow}, 
a combination of \eqref{eq:con flow} and Wasserstein gradient flows \eqref{eq:wasserstein} leads to the constitutive relation:
$$\rho {\bm v} = - \rho \Grad \frac{\delta E}{\delta \rho},$$  
which can be rewritten into the following Lagrangian form \cite{cheng2024flow}:
 \begin{equation}\label{vis:force:0}
 \begin{split}
 &\rho_0({\bm X})\frac{\mathrm{d} {\bm x}}{\mathrm{d}t}+\rho_0(\bm X)\Grad_{\bm x}\frac{\delta E}{\delta \rho}  =0,\\
 & \rho(\bx,t)=\frac{\rho_0({\bm X})}{\text{det}\frac{\partial \bm x}{\partial {\bm X}}}.
 \end{split}
\end{equation}
We can readily demonstrate that the Lagrangian formulation \eqref{vis:force:0} is equivalent to the original system \eqref{eq:wd} at the continuous level. We propose adaptive time-stepping schemes to solve \eqref{eq:wd} based on \eqref{vis:force:0}, rather than directly utilizing \eqref{eq:wd}.

We note that \eqref{vis:force:0} constitutes a highly nonlinear system. To solve \eqref{vis:force:0} efficiently, we introduce a regularization term to \eqref{vis:force:0}, resulting in the following modified system:
 \begin{equation}\label{vis:force}
 \begin{split}
& \rho_0({\bm X})\frac{\mathrm{d} {\bm x}}{\mathrm{d}t}-\varepsilon \Delta_{\bm X} \frac{\mathrm{d}{\bm x}}{\mathrm{d}t}  +\rho_0(\bm X)\Grad_{\bm x}\frac{\delta E}{\delta \rho}  =0,\\
&\rho(\bx,t)=\frac{\rho_0({\bm X})}{\text{det}\frac{\partial \bm x}{\partial {\bm X}}}.
 \end{split}
\end{equation}
We use the Dirichlet boundary condition ${\bm x}^{n+1}|_{\partial\Omega}={\bm X}|_{\partial\Omega}$, and $\Omega_0^{\bm x}=\Omega_0^{\bm X}$. Notice that  $\varepsilon \Delta_{\bm X} \frac{\mathrm{d}{\bm x}}{\mathrm{d}t}$ is  an artificial viscosity term and $\varepsilon\ge0$ is a small constant. 
Below we shall propose adaptive time-stepping methods to solve the modified system \eqref{vis:force}.

\subsection{Adaptive time-stepping method}

 A second-order semi-discrete scheme with variable time steps for \eqref{vis:force} can be constructed  as follows:  for any $n\ge1$, given ${\bm x}^{n-1},\ {\bm x}^{n}$, $\tau_{n+1}$, $r_{n+1}$ and the initial value $\rho_0({\bm X})$, the  solution $({\bm x}^{n+1},\rho^{n+1})$ can be obtained by solving
\begin{align}
&\rho_0({\bm X})\frac{(1+2r_{n+1}){\bm x}^{n+1}-(1+r_{n+1})^2{\bm x}^{n}+r_{n+1}^2{\bm x}^{n-1}}{\tau_{n+1}(1+r_{n+1})}-\tau_{n+1}\Delta_{\bm X} ({\bm x}^{n+1}-{\bm x}^{n})\nonumber\\
&\qquad+\rho_0({\bm X})\nabla_{\bm x}F'\left(\frac{\rho_0({\bm X})}{\text{det}\frac{\partial {\bm x}^{n+1}}{\partial {\bm X}}}\right)=0,\label{eq:jko_2}\\
&\rho^{n+1}=\frac{\rho_0({\bm X})}{\text{det}\frac{\partial {\bm x}^{n+1}}{\partial {\bm X}}}, \label{eq:jko_11}
\end{align}
with the Dirichlet boundary condition ${\bm x}^{n+1}|_{\partial\Omega}={\bm X}|_{\partial\Omega}$, and $\Omega_0^{\bm x}=\Omega_0^{\bm X}$.  
Since \eqref{eq:jko_2} requires initial values ${\bm x}^0$ and ${\bm x}^1$, we choose ${\bm x}^0={\bm X}$ and use the first-order scheme in \cite{cheng2024flow} to solve ${\bm x}^1$. 

Now   we will prove that the scheme \eqref{eq:jko_2}-\eqref{eq:jko_11} preserves properties of Wasserstein gradient flows.

\begin{lemma}\label{lem:semi wd}
	Assume  initial value $\rho_0({\bm X})>0$, if the energy $E({\bm x})$ is convex with respect to ${\bm x}$, then there exists a unique solution to scheme \eqref{eq:jko_2}-\eqref{eq:jko_11}. Moreover, if the internal energy density is of the form $U(s)=s\log s$, then for any $1\leq n\leq N$, the solution $\rho^{n+1}({\bm x})$ to numerical scheme \eqref{eq:jko_2}-\eqref{eq:jko_11} 
	is positive and mass conservative in the sense that $	\int_{\Omega_t^{\bm x}}\rho^{n+1}({\bm x})\mathrm{d}{\bm x}=\int_{\Omega_0^{\bm X}}\rho_0({\bm X})\mathrm{d}{\bm X}$.
\end{lemma}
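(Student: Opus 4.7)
The plan is to recognize (3.6)-(3.7) as the Euler--Lagrange system of a strictly convex variational problem in the deformation map $\bx$, then use an entropic barrier to force positivity of the Jacobian determinant, and finally conclude mass conservation by a direct change of variables.

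First I would introduce the functional
\begin{equation*}
\mathcal{J}(\bx) = \frac{1+2r_{n+1}}{2\tau_{n+1}(1+r_{n+1})}\int_{\Omega_0^{\bm X}} \rho_0(\bX)\,|\bx-\hat{\bx}^n|^2\, d\bX + \frac{\tau_{n+1}}{2}\int_{\Omega_0^{\bm X}} |\nabla_{\bX}(\bx-\bx^n)|^2\, d\bX + E(\bx),
\end{equation*}
with $\hat{\bx}^n := \bigl[(1+r_{n+1})^2\bx^n - r_{n+1}^2\bx^{n-1}\bigr]/(1+2r_{n+1})$ and $E$ expressed in Lagrangian coordinates through $\rho=\rho_0/J$, $J:=\det(\partial\bx/\partial\bX)$. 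Using the standard identity $\delta J = J\,\nabla_{\bx}\!\cdot\!\delta\bx$, a direct computation shows that the Euler--Lagrange equation of $\mathcal{J}$ reproduces (3.6) exactly, the nonlinear term $\rho_0\,\nabla_{\bx}F'(\rho^{n+1})$ being precisely the first variation of the transformed energy $E(\bx)$ with respect to $\bx$. Granted this equivalence, existence and uniqueness follow from strict convexity: the weighted quadratic $\int \rho_0|\bx-\hat{\bx}^n|^2$ is strictly convex (because $\rho_0>0$), the $H^1$-type regularization is convex, and $E$ is convex in $\bx$ by assumption, so $\mathcal{J}$ is strictly convex and coercive over the affine admissible set determined by $\bx|_{\partial\Omega}=\bX|_{\partial\Omega}$.

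For positivity, when $U(s)=s\log s$ the internal energy becomes, after transforming to Lagrangian coordinates,
\begin{equation*}
\int_{\Omega_t^{\bm x}} \rho\log\rho\, d\bx = \int_{\Omega_0^{\bm X}} \rho_0\log\rho_0\, d\bX - \int_{\Omega_0^{\bm X}} \rho_0\log J\, d\bX,
\end{equation*}
and the second term acts as a logarithmic barrier: if $J^{n+1}$ approached zero on any set of positive measure, $\mathcal{J}$ would diverge to $+\infty$, contradicting optimality. Hence $J^{n+1}>0$ almost everywhere at the minimizer, and $\rho^{n+1}=\rho_0/J^{n+1}$ is well-defined and strictly positive. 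Mass conservation is then immediate from the same change of variables $\bx=\bx^{n+1}(\bX)$:
\begin{equation*}
\int_{\Omega_t^{\bm x}} \rho^{n+1}(\bx)\, d\bx = \int_{\Omega_0^{\bm X}} \frac{\rho_0(\bX)}{J^{n+1}}\cdot J^{n+1}\, d\bX = \int_{\Omega_0^{\bm X}} \rho_0(\bX)\, d\bX.
\end{equation*}

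The main obstacle is the variational-form identification in the first step: one must carefully verify that differentiating $E(\bx)=\int F(\rho_0/J)J\,d\bX$ with respect to $\bx$ (through its dependence on $J$ and on the Eulerian position for the drift and interaction pieces) yields exactly the term $\rho_0\,\nabla_{\bx}F'(\rho_0/J)$ appearing in (3.6), including the contributions from $V(\bx)$ and the nonlocal kernel $W$. A secondary delicate point is the well-posedness of the minimization on the open set $\{J>0\}$, where the entropy barrier is precisely the ingredient needed to keep any minimizing sequence bounded away from $J=0$ and close the argument.
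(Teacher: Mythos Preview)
Your proposal is correct and follows essentially the same route as the paper: recast (3.6) as the Euler--Lagrange equation of the strictly convex functional $\mathcal{J}$ (the paper writes the quadratic term in the equivalent unfactored form $\frac{1}{2\tau_{n+1}(1+r_{n+1})(1+2r_{n+1})}\int\rho_0|(1+2r_{n+1})\bx-(1+r_{n+1})^2\bx^n+r_{n+1}^2\bx^{n-1}|^2$), use the logarithmic barrier from $U(s)=s\log s$ to rule out $J^{n+1}=0$, and conclude mass conservation by the change of variables $d\bx=J^{n+1}\,d\bX$. Your discussion of the obstacles (the variational identification and the barrier argument on the open set $\{J>0\}$) is in fact more explicit than the paper's own proof.
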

\begin{proof}
The solution to scheme \eqref{eq:jko_2} is the minimizer of the following variational problem:
\begin{equation}\label{eq:jko_semi}
	\begin{aligned}
		{\bm x}^{k+1}:=\arg\min_{\bm x}\Big\{&\frac{1}{2\tau_{n+1}}\int_{\Omega_0^{\bm X}}\rho_0({\bm X})\frac{|(1+2r_{n+1}){\bm x}-(1+r_{n+1})^2{\bm x}^{n}+r_{n+1}^2{\bm x}^{n-1}|^2}{(1+r_{n+1})(1+2r_{n+1})}\mathrm{d}{\bm X}\\
		&+E({\bm x})+\frac{\tau_{n+1}}{2}\int_{\Omega_0^{\bm X}}|\nabla_{\bm X}({\bm x}-{\bm x}^k)|^2\mathrm{d}{\bm X}\Big\}.
	\end{aligned}
\end{equation}
Since the above variational problem is convex with respect to ${\bm x}$ due to the convexity of $E({\bm x})$, the  scheme \eqref{eq:jko_2} admits a unique solution. In addition, 
If the internal energy density is of the form $U(s)=s\log s$, then the term $\frac{\rho_0({\bm X})}{\text{det}\frac{\partial {\bm x}^{n+1}}{\partial {\bm X}}}$ should stay in the domain of the logarithmic function.  This implies that $\text{det}\frac{\partial {\bm x}^{n+1}}{\partial {\bm X}}\ge0$. If $\text{det}\frac{\partial {\bm x}^{n+1}}{\partial {\bm X}}=0$, then the variational energy functional in \eqref{eq:jko_semi} at 
${\bm x}^{n+1}$ becomes $+\infty$, which contradicts the minimizer. Then the solution satisfies $\rho^{n+1}>0$. 
	Using the equality \eqref{eq:jko_11} and  $\text{det}\frac{\partial {\bm x}}{\partial {\bm X}}\mathrm{d}{\bm X}=\mathrm{d}{\bm x}$, we have
	\begin{align*}
		\int_{\Omega_t^{\bm x}}\rho^{n+1}({\bm x})\mathrm{d}{\bm x}=\int_{\Omega_t^{\bm x}}\frac{\rho_0({\bm X})}{\text{det}\frac{\partial {\bm x}^{n+1}}{\partial {\bm X}}}\mathrm{d}{\bm x}=\int_{\Omega_0^{\bm X}}\frac{\rho_0({\bm X})}{\text{det}\frac{\partial {\bm x}^{n+1}}{\partial {\bm X}}}\text{det}\frac{\partial {\bm x}^{n+1}}{\partial {\bm X}}\mathrm{d}{\bm X}=\int_{\Omega_0^{\bm X}}\rho_0({\bm X})\mathrm{d}{\bm X}.
	\end{align*}  
	Hence, the scheme \eqref{eq:jko_2}-\eqref{eq:jko_11} is mass conservative.
\end{proof}

The energy dissipation law will hold under a mild condition on the maximum time-step ratio.
	\begin{theorem}
		 Assume the initial value $\rho_0({\bm X})>0$,  and the maximum time-step ratio satisfies $0<r_{\max}\le\frac{3+\sqrt{17}}{2}$. If energy $E({\bm x})$ is convex with respect to ${\bm x}$, then scheme \eqref{eq:jko_2} is energy dissipative in the sense that
\begin{equation}
	\begin{aligned}
		&E({\bm x}^{n+1})+\frac{r_{\max}}{2\tau_{n+1}(1+r_{\max})}\int_{\Omega_0^{\bm X}}\rho_0({\bm X})|{\bm x}^{n+1}-{\bm x}^{n}|^2\mathrm{d}{\bm X}\\
		\le& E({\bm x}^{n})+\frac{r_{\max}}{2\tau_{n}(1+r_{\max})}\int_{\Omega_0^{\bm X}}\rho_0({\bm X})|{\bm x}^{n}-{\bm x}^{n-1}|^2\mathrm{d}{\bm X}.
	\end{aligned}
\end{equation}
	\end{theorem}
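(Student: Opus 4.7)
The plan is to test the scheme \eqref{eq:jko_2} in the $L^2(\Omega_0^{\bm X})$ inner product against the increment $\bm{x}^{n+1}-\bm{x}^n$ and reorganize so that the weighted kinetic-energy quantity
\begin{equation*}
A_{n+1}:=\int_{\Omega_0^{\bm X}}\rho_0(\bm{X})|\bm{x}^{n+1}-\bm{x}^n|^2\,\mathrm{d}\bm{X}
\end{equation*}
can be telescoped. The crucial algebraic identity, using $(1+r_{n+1})^2=1+2r_{n+1}+r_{n+1}^2$, is
\begin{equation*}
(1+2r_{n+1})\bm{x}^{n+1}-(1+r_{n+1})^2\bm{x}^n+r_{n+1}^2\bm{x}^{n-1}=(1+2r_{n+1})(\bm{x}^{n+1}-\bm{x}^n)-r_{n+1}^2(\bm{x}^n-\bm{x}^{n-1}),
\end{equation*}
which turns the inertial contribution into $\frac{1+2r_{n+1}}{\tau_{n+1}(1+r_{n+1})}A_{n+1}$ minus a cross term to be handled by a weighted Cauchy--Schwarz inequality.

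I would next handle the remaining two terms. Integration by parts under the Dirichlet boundary condition on ${\bm x}^{n+1}$ makes the viscous contribution $\tau_{n+1}\int_{\Omega_0^{\bm X}}|\nabla_{\bm X}(\bm{x}^{n+1}-\bm{x}^n)|^2\,\mathrm{d}\bm{X}\ge 0$, which is simply discarded. For the nonlinear driving term I use the Piola/Jacobian identity $\delta(\det\partial\bm{x}/\partial\bm{X})=(\det\partial\bm{x}/\partial\bm{X})(\nabla_{\bm{x}}\cdot\delta\bm{x})$ to identify $\delta E/\delta\bm{x}=\rho_0(\bm{X})\nabla_{\bm{x}}F'(\rho)$, and the hypothesized convexity of $E(\bm{x})$ yields the supporting-hyperplane inequality
\begin{equation*}
\int_{\Omega_0^{\bm X}}\rho_0(\bm{X})\nabla_{\bm{x}}F'(\rho^{n+1})\cdot(\bm{x}^{n+1}-\bm{x}^n)\,\mathrm{d}\bm{X}\ge E(\bm{x}^{n+1})-E(\bm{x}^n).
\end{equation*}
Applying the weighted Cauchy--Schwarz $\int_{\Omega_0^{\bm X}}\rho_0(\bm{x}^n-\bm{x}^{n-1})\cdot(\bm{x}^{n+1}-\bm{x}^n)\,\mathrm{d}\bm{X}\le\tfrac{1}{2}A_{n+1}+\tfrac{1}{2}A_n$ and collecting all contributions yields
\begin{equation*}
E(\bm{x}^{n+1})+\frac{2+4r_{n+1}-r_{n+1}^2}{2\tau_{n+1}(1+r_{n+1})}A_{n+1}\le E(\bm{x}^n)+\frac{r_{n+1}^2}{2\tau_{n+1}(1+r_{n+1})}A_n.
\end{equation*}

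It remains to bound both quadratic coefficients by $r_{\max}/(2(1+r_{\max}))$ divided by the appropriate time step. Using $\tau_{n+1}=r_{n+1}\tau_n$, the right-hand coefficient becomes $\frac{r_{n+1}}{2\tau_n(1+r_{n+1})}\le\frac{r_{\max}}{2\tau_n(1+r_{\max})}$ since $r\mapsto r/(1+r)$ is monotone increasing. For the left-hand coefficient I require $g(r_{n+1})\ge r_{\max}/(1+r_{\max})$ with $g(r):=(2+4r-r^2)/(1+r)$. The main obstacle is this scalar quadratic inequality: a direct calculation $g'(r)=(2-2r-r^2)/(1+r)^2$ shows $g$ is decreasing for $r>-1+\sqrt{3}$, so on $(0,r_{\max}]$ its minimum is attained at $r_{\max}$; and solving $g(r)=r/(1+r)$ reduces to $r^2-3r-2=0$, whose positive root is $(3+\sqrt{17})/2$. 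Consequently $g(r_{\max})\ge r_{\max}/(1+r_{\max})$ holds precisely when $r_{\max}\le(3+\sqrt{17})/2$, matching the advertised threshold and completing the proof.
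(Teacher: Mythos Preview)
Your proposal is essentially the paper's own proof: test \eqref{eq:jko_2} against $\bm{x}^{n+1}-\bm{x}^n$, use the algebraic rewriting of the BDF2 combination, discard the nonnegative viscous term, invoke convexity of $E$ for the supporting-hyperplane inequality, apply Cauchy--Schwarz to the cross term, and reduce to the scalar condition $g(r_{\max})\ge r_{\max}/(1+r_{\max})$ with $g(r)=(2+4r-r^2)/(1+r)$.

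One small logical slip: your claim that ``on $(0,r_{\max}]$ the minimum of $g$ is attained at $r_{\max}$'' does not follow merely from $g$ being decreasing for $r>-1+\sqrt{3}$, since the interval also contains points where $g$ is increasing. The paper handles this by writing $g(r)\ge\min\{g(0),g(r_{\max})\}$; since $g(0)=2>r_{\max}/(1+r_{\max})$ always, the binding endpoint is indeed $r_{\max}$ and your conclusion is unaffected.
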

	\begin{proof}
		Taking inner product with ${\bm x}^{n+1}-{\bm x}^{n}$ of \eqref{eq:jko_2} leads to
		\begin{align*}
		&\frac{1+2r_{n+1}}{\tau_{n+1}(1+r_{n+1})}\int_{\Omega_0^{\bm X}}\rho_0({\bm X})\left(|{\bm x}^{n+1}-{\bm x}^{n}|^2-\frac{r_{n+1}^2}{1+2r_{n+1}}({\bm x}^{n+1}-{\bm x}^{n})({\bm x}^{n}-{\bm x}^{n-1})\right)\mathrm{d}{\bm X}\nonumber\\
		+&\tau_{n+1}\|\partial_{\bm X}({\bm x}^{n+1}-{\bm x}^{n})\|^2+\left(\frac{\delta E}{\delta {\bm x}}({\bm x}^{n+1}),({\bm x}^{n+1}-{\bm x}^{n})\right)=0,
		\end{align*}
		 an application of the convexity of $E({\bm x})$ with respect to ${\bm x}$ implies that
\begin{equation}\label{eq:energy1}
	\begin{aligned}
	&\frac{1+2r_{n+1}}{\tau_{n+1}(1+r_{n+1})}\int_{\Omega_0^{\bm X}}\rho_0({\bm X})\left(|{\bm x}^{n+1}-{\bm x}^{n}|^2-\frac{r_{n+1}^2}{1+2r_{n+1}}({\bm x}^{n+1}-{\bm x}^{n})({\bm x}^{n}-{\bm x}^{n-1})\right)\mathrm{d}{\bm X}\\
	+&\tau_{n+1}\|\partial_{\bm X}({\bm x}^{n+1}-{\bm x}^{n})\|^2+E({\bm x}^{n+1})\le E({\bm x}^{n}).
	\end{aligned}
\end{equation}
		For the first term on the left hand side of \eqref{eq:energy1}, using the Cauchy-Schwarz inequality shows that
\begin{equation*}
		\begin{aligned}
		&\int_{\Omega_0^{\bm X}}\rho_0({\bm X})\left(|{\bm x}^{n+1}-{\bm x}^{n}|^2-\frac{r_{n+1}^2}{1+2r_{n+1}}({\bm x}^{n+1}-{\bm x}^{n})({\bm x}^{n}-{\bm x}^{n-1})\right)\mathrm{d}{\bm X}\\
		\ge &\int_{\Omega_0^{\bm X}}\rho_0({\bm X})\left(\frac{2+4r_{n+1}-r_{n+1}^2}{2(1+2r_{n+1})}|{\bm x}^{n+1}-{\bm x}^{n}|^2-\frac{r_{n+1}^2}{2(1+2r_{n+1})}|{\bm x}^{n}-{\bm x}^{n-1}|^2\right)\mathrm{d}{\bm X}.
	\end{aligned}
\end{equation*}
		Substituting above estimate into \eqref{eq:energy1} leads to
		\begin{align*}
		&\int_{\Omega_0^{\bm X}}\rho_0({\bm X})\left(\frac{2+4r_{n+1}-r_{n+1}^2}{2\tau_{n+1}(1+r_{n+1})}|{\bm x}^{n+1}-{\bm x}^{n}|^2-\frac{r_{n+1}^2}{2\tau_{n+1}(1+r_{n+1})}|{\bm x}^{n}-{\bm x}^{n-1}|^2\right)\mathrm{d}{\bm X}\nonumber\\
		+&\tau_{n+1}\|\partial_{\bm X}({\bm x}^{n+1}-{\bm x}^{n})\|^2+E({\bm x}^{n+1})\le E({\bm x}^{n}),
		\end{align*}
		then the following inequality holds by using $r_{n+1}=\frac{\tau_{n+1}}{\tau_{n}}$:
		\begin{align*}
		&\frac{2+4r_{n+1}-r_{n+1}^2}{2\tau_{n+1}(1+r_{n+1})}\int_{\Omega_0^{\bm X}}\rho_0({\bm X})|{\bm x}^{n+1}-{\bm x}^{n}|^2\mathrm{d}{\bm X}+\tau_{n+1}\|\partial_{\bm X}({\bm x}^{n+1}-{\bm x}^{n})\|^2+E({\bm x}^{n+1})\\
		\le &E({\bm x}^{n})+\frac{r_{n+1}}{2\tau_{n}(1+r_{n+1})}\int_{\Omega_0^{\bm X}}\rho_0({\bm X})|{\bm x}^{n}-{\bm x}^{n-1}|^2\mathrm{d}{\bm X}\\
		\le& E({\bm x}^{n})+\frac{r_{\max}}{2\tau_{n}(1+r_{\max})}\int_{\Omega_0^{\bm X}}\rho_0({\bm X})|{\bm x}^{n}-{\bm x}^{n-1}|^2\mathrm{d}{\bm X}.
		\end{align*}
		The energy dissipation law will hold under the condition that $	g(r_{n+1}):=\frac{2+4r_{n+1}-r_{n+1}^2}{1+r_{n+1}}\ge \frac{r_{\max}}{1+r_{\max}}$. 
		 It can be easily verified that the function $g(r)=\frac{2+4r-r^2}{1+r}$ is increasing within $r\in(0,-1+\sqrt{3})$ and decreasing with $r\in(-1+\sqrt{3},r_{\max})$, then we have $g(r)\ge\min\{g(0),g(r_{\max})\}$ for $0<r\le r_{\max}$. The energy dissipation law will be derived once the maximum time-step ratio satisfies:
			\begin{align*}
			g(r_{\max})\ge\frac{r_{\max}}{1+r_{\max}},
			\end{align*}
			that is $2+3r_{\max}-r_{\max}^2\ge0$, 
			then the energy dissipation law holds when $0<r_{\max}\le\frac{3+\sqrt{17}}{2}$. 
\end{proof}

\begin{remark}
	In above Theorem, we assume that the energy is convex with respect to ${\bm x}$. In fact,  the PME and Fokker-Planck equation satisfy such assumption in one dimension.
\end{remark}

\begin{remark}
	Similar to the adaptive time-stepping BDF2 scheme proposed for the Cahn-Hilliard equation in \cite{chen2019second}, which has been proved to be energy stable when the maximum time-step ratio satisfies $0<r_{\max}\le\frac{3+\sqrt{17}}{2}$. The restriction can be relaxed by using the discrete convolution kernel in \cite{liao2022mesh} and the scalar auxiliary variable approach in \cite{hou2023implicit}. It would be interesting to investigate whether the scheme \eqref{eq:jko_2} remains energy stable under a milder restriction on the maximum time-step ratio.
\end{remark}

\section{Fully discretization schemes}
In this section, we shall construct  adaptive time-stepping with  standard finite difference  in space for non-conservative models and conservative models. 
\subsection{Non-conservative models}
For simplicity, we use the Allen-Cahn equation \eqref{ac:tra1} as an example. 
 
 Let $\Omega_0^X=[-L,L]$ and $\Omega_0^x=[-L,L]$ represent the computational domain in Lagrangian and Eulerian coordinates, respectively. The spatial grids are defined as $-L=X_0<X_1<\cdots<X_{M_x}=L$, with the grid spacing given by $h=\frac{2L}{M_x}$. We define the midpoints $x_{j+\frac{1}{2}}:=\frac{1}{2}(x_j+x_{j+1})$,  and the discrete differential operators as follows: 

$$(D_hx)_{j+\frac{1}{2}}:=\frac{1}{h}(x_{j+1}-x_j),$$ for $j=0,1,\cdots,M_x-1$, 
 and $$(d_hx)_j:=\frac{1}{h}(x_{j+\frac{1}{2}}-x_{j-\frac{1}{2}}),$$ for $j=1,\cdots,M_x-1$. 
 The discrete inner product is defined by $$(u,v)_h:=\sum_{j=0}^{M_x-1}(uv)_{j+\frac{1}{2}}h,$$ and $$[u,v]_h=\sum_{j=1}^{M_x-1}(uv)_jh+\frac{h}{2}(u_0v_0+u_{M_x}v_{M_x}).$$ If the test function $v$ satisfies the boundary conditions $v_0=v_{M_x}=0$,  the summation by parts formula holds:  $(D_hu,v)_h=-[u,d_hv]_h$. 
 
 The fully discrete, adaptive time stepping scheme for the Allen-Cahn equation is given as follows: for any $n\ge1$, given $x^{n-1}$, $x^n$, $\tau_{n+1}$ and $r_{n+1}$, solve $(x^{n+1},\rho^{n+1})$ from:
   \begin{equation}\label{scheme:ac fully discrete}
 \begin{aligned}
 &\frac{(2r_{n+1}+1)(\rho_0^{\prime}(X_{j+\frac{1}{2}}))^2}{2\tau_{n+1}(r_{n+1}+1)M(\rho_0(X_{j+\frac{1}{2}}))}\left((D_hx^{n+1})_{j+\frac{1}{2}}^{-1}+(D_hx^{n})_{j+\frac{1}{2}}^{-1}\right)(x_{j+\frac{1}{2}}^{n+1}-x_{j+\frac{1}{2}}^{n})\\
 &-\eta\tau_{n+1}D_h\left(\log(d_hx^{n+1}) -\log(d_hx^{n})\right)_{j+\frac{1}{2}}\\
 &-\frac{r_{n+1}^2(\rho_0^{\prime}(X_{j+\frac{1}{2}}))^2}{2\tau_{n+1}(r_{n+1}+1)M(\rho_0(X_{j+\frac{1}{2}}))}\left((1+\frac{1}{2r_{n+1}})(D_hx^n)_{j+\frac{1}{2}}^{-\frac{1}{2}}-\frac{1}{2r_{n+1}}(D_hx^{n+1})_{j+\frac{1}{2}}^{-\frac{1}{2}}\right)\\
 &\times\left((D_hx^{n-1})_{j+\frac{1}{2}}^{-\frac{1}{2}}-(D_hx^n)_{j+\frac{1}{2}}^{-\frac{1}{2}}\right)(x_{j+\frac{1}{2}}^{n}-x_{j+\frac{1}{2}}^{n-1})\\
 =&-\frac{\epsilon^2}{2}D_h(\rho_0^{\prime}(X)(d_hx^{n+1})^{-1})^2_{j+\frac{1}{2}}+D_hF(\rho_0(X))_{j+\frac{1}{2}},
 \end{aligned}
 \end{equation}
  then the solution to the Allen-Cahn equation will be obtained by $\rho^{n+1}_{j+\frac{1}{2}}=\rho_0(X_{j+\frac{1}{2}})$ for $j=0$, $1$, $\cdots$, $M_x-1$. 
 The following initial and boundary conditions will be considered:
 \begin{align}
 	(x_0^0,x_1^0,\cdots,x_{M_x}^0)=(X_0,X_1,\cdots,X_{M_x}),\quad x^{n+1}_0=X_0,\ x^{n+1}_{M_x}=X_{M_x}.
 \end{align}
 The term $\eta\tau_{n+1}D_h\left(\log(d_hx^{n+1}) -\log(d_hx^{n})\right)$ with $\eta\ge0$ is introduced to ensure the positivity of $d_hx^{n+1}$. 

 Similar to the semi-discrete case, taking a discrete inner product of \eqref{scheme:ac fully discrete} with $-(x^{n+1}-x^n)$ will lead to the energy dissipation law in Lagrangian coordinate.
\begin{theorem}
	 	For the Allen-Cahn equation \eqref{eq:ac ori} with $\mathcal{M}(\rho)>0$, if the maximum time-step ratio satisfies $0<r_{\max}\le\frac{3}{2}$, then the fully discrete numerical scheme \eqref{scheme:ac fully discrete} is energy dissipative in the sense that
	\begin{equation}\label{eq: ac energy discrete}
	\begin{aligned}
	&E_h^{n+1}+\frac{r_{\max}}{2\tau_{n+1}(r_{\max}+1)}\left(\frac{(\rho_0^{\prime})^2}{\mathcal{M}(\rho_0)}((D_hx^{n+1})^{-1}+(D_hx^{n})^{-1})(x^{n+1}-x^{n}),(x^{n+1}-x^{n})\right)_h\\
	&+\eta\tau_{n+1} [\log(d_hx^{n+1}) -\log(d_hx^{n}),d_h(x^{n+1}-x^n)]_h\\
	\le &E_h^{n}+\frac{r_{\max}}{2\tau_{n}(r_{\max}+1)}\left(\frac{(\rho_0^{\prime})^2}{\mathcal{M}(\rho_0)}((D_hx^{n-1})^{-1}+(D_hx^{n})^{-1})(x^{n}-x^{n-1}),(x^{n}-x^{n-1})\right)_h,
	\end{aligned}
	\end{equation}
	where the discrete energy is defined by $$E_h^n=\frac{\epsilon^2}{2}\left[|\rho_0^{\prime}(X)(d_hx^n)^{-1}|^2,d_hx^n\right]_h+\left[F(\rho_0(X)),d_hx^n\right]_h.$$ 
\end{theorem}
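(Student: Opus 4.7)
The plan is to mirror the semi-discrete energy estimate, replacing each continuous inner product and integration by parts with its discrete counterpart $(\cdot,\cdot)_h$, $[\cdot,\cdot]_h$, and the summation-by-parts identity $(D_hu,v)_h=-[u,d_hv]_h$. Specifically, I would take the discrete inner product of \eqref{scheme:ac fully discrete} with $-(x^{n+1}-x^n)$ in the $(\cdot,\cdot)_h$ pairing and label the three resulting terms on the left as $L_1,L_2,L_3$ and the two on the right as $R_1,R_2$, exactly as in the semi-discrete proof. The boundary contributions drop out because $x^{n+1}_0=X_0$ and $x^{n+1}_{M_x}=X_{M_x}$, so $(x^{n+1}-x^n)$ vanishes at the endpoints, which is exactly what one needs for summation by parts.

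For $L_1$ and $L_3$, I would carry over the semi-discrete Cauchy--Schwarz estimate verbatim (it is an algebraic identity among grid-point values with positive weight $\frac{(\rho_0')^2}{\mathcal M(\rho_0)}$ and positive factors $(D_hx^{k})^{-1}$, which are well-defined because of the positivity-preserving regularization). This yields
\[
L_1+L_3\le -\frac{(2r_{n+1}+1)(3-r_{n+1})}{8\tau_{n+1}(r_{n+1}+1)}\Big(\tfrac{(\rho_0')^2}{\mathcal M(\rho_0)}((D_hx^{n+1})^{-1}+(D_hx^{n})^{-1})(x^{n+1}-x^n),x^{n+1}-x^n\Big)_h
\]
plus the term at the previous time level, just as before. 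For $L_2$, after applying the discrete summation by parts the difference of logarithms and the difference of $d_hx$ both sit at the same grid points, and the discrete mean value theorem applied pointwise gives a nonpositive contribution, matching the $L_2\le 0$ estimate in the semi-discrete proof.

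For $R_1$, the discrete integration by parts transfers the $D_h$ onto $x^{n+1}-x^n$ to produce $d_h(x^{n+1}-x^n)$, and then I would use the pointwise convexity of $1/y$ in the form $(d_hx^{n})^{-1}-(d_hx^{n+1})^{-1}\ge -(d_hx^{n+1})^{-2}(d_h(x^{n+1}-x^n))$ to obtain $R_1\ge \tfrac{\epsilon^2}{2}[(\rho_0')^2(d_hx^{n+1})^{-1},1]_h-\tfrac{\epsilon^2}{2}[(\rho_0')^2(d_hx^{n})^{-1},1]_h$. For $R_2$ the discrete summation by parts gives $R_2=[F(\rho_0),d_h(x^{n+1}-x^n)]_h$, which is exactly the increment needed to complete the $F$-part of the discrete energy. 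Putting these together produces the discrete analogue of the semi-discrete inequality, and the sufficient condition on $r_{\max}$ reduces to the same scalar inequality $h(s)=\frac{(2s+1)(3-s)}{8(s+1)}\ge \frac{r_{\max}}{2(r_{\max}+1)}$, which holds precisely when $0<r_{\max}\le 3/2$ by the monotonicity analysis already performed.

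The main obstacle, and the only place where the discrete setting is genuinely different from the semi-discrete one, is to make sure the pointwise factors $(D_hx^{k})^{-1}$ and $(d_hx^{k})^{-1}$ are meaningful and that the discrete version of the logarithm/mean-value step is done on the correct grid. Care is needed because $D_h$ produces quantities at half-integer points $j+\tfrac12$ while $d_h$ produces quantities at integer points $j$; one must align these consistently throughout so the summation-by-parts identity $(D_hu,v)_h=-[u,d_hv]_h$ applies without boundary terms, and so that the logarithmic term from $\eta\tau_{n+1}D_h(\log(d_hx^{n+1})-\log(d_hx^n))$ pairs correctly against $d_h(x^{n+1}-x^n)$ after one summation by parts. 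Once this bookkeeping is set, the rest is a line-by-line transcription of the semi-discrete argument, and the energy inequality \eqref{eq: ac energy discrete} follows.
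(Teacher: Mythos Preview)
Your proposal is correct and follows essentially the same approach as the paper's proof: take the discrete inner product with $-(x^{n+1}-x^n)$, apply summation by parts using the boundary conditions, use the Cauchy--Schwarz chain on $L_1+L_3$ to produce the coefficient $\tfrac{(2r_{n+1}+1)(3-r_{n+1})}{8\tau_{n+1}(r_{n+1}+1)}$, handle $R_1$ via pointwise convexity of $1/y$, and close with the same scalar inequality $h(s)\ge \tfrac{r_{\max}}{2(r_{\max}+1)}$. One small point: in the theorem as stated the logarithmic term $\eta\tau_{n+1}[\log(d_hx^{n+1})-\log(d_hx^n),d_h(x^{n+1}-x^n)]_h$ is \emph{retained} on the left-hand side rather than dropped via $L_2\le 0$; since $(\log a-\log b)(a-b)\ge 0$ pointwise, you can simply keep it after the summation by parts instead of invoking the mean value theorem to discard it.
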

\begin{proof}
 Taking a discrete inner product of \eqref{scheme:ac fully discrete} with $-(x^{n+1}-x^n)$ leads to
\begin{equation}
	 \begin{aligned}
		&-\frac{2r_{n+1}+1}{2\tau_{n+1}(r_{n+1}+1)}\left(\frac{(\rho_0^{\prime}(X))^2}{\mathcal{M}(\rho_0(X))}\left((D_hx^{n+1})^{-1}+(D_hx^{n})^{-1}\right)(x^{n+1}-x^{n}),x^{n+1}-x^n\right)_h\\
		&+\eta\tau_{n+1}\left(D_h\left(\log(d_hx^{n+1}) -\log(d_hx^{n})\right),x^{n+1}-x^n\right)_h\\
		&+\frac{r_{n+1}^2}{2\tau_{n+1}(r_{n+1}+1)}\Bigg(\frac{(\rho_0^{\prime}(X))^2}{\mathcal{M}(\rho_0(X))}\left((1+\frac{1}{2r_{n+1}})(D_hx^n)^{-\frac{1}{2}}-\frac{1}{2r_{n+1}}(D_hx^{n+1})^{-\frac{1}{2}}\right)\\
		&\times\left((D_hx^{n-1})^{-\frac{1}{2}}-(D_hx^n)^{-\frac{1}{2}}\right)(x^{n}-x^{n-1}),x^{n+1}-x^n\Bigg)_h\\
		=&\frac{\epsilon^2}{2}(D_h(\rho_0^{\prime}(X)(d_hx^{n+1})^{-1})^2,x^{n+1}-x^n)-(D_hF(\rho_0(X)),x^{n+1}-x^n)_h.
	\end{aligned}
\end{equation}
By applying summation by parts and utilizing the convexity of $\frac{1}{y}$ with respect to $y$, we have
 \begin{equation}\label{eq: discrete ac 1}
 	\begin{aligned}
 		&-\frac{2r_{n+1}+1}{2\tau_{n+1}(r_{n+1}+1)}\left(\frac{(\rho_0^{\prime}(X))^2}{\mathcal{M}(\rho_0(X))}\left((D_hx^{n+1})^{-1}+(D_hx^{n})^{-1}\right)(x^{n+1}-x^{n}),x^{n+1}-x^n\right)_h\\
 		&-\eta\tau_{n+1}\left[\log(d_hx^{n+1}) -\log(d_hx^{n}),d_h(x^{n+1}-x^n)\right]_h\\
 		&+\frac{r_{n+1}^2}{2\tau_{n+1}(r_{n+1}+1)}\Bigg(\frac{(\rho_0^{\prime}(X))^2}{\mathcal{M}(\rho_0(X))}\left((1+\frac{1}{2r_{n+1}})(D_hx^n)^{-\frac{1}{2}}-\frac{1}{2r_{n+1}}(D_hx^{n+1})^{-\frac{1}{2}}\right)\\
 		&\times\left((D_hx^{n-1})^{-\frac{1}{2}}-(D_hx^n)^{-\frac{1}{2}}\right)(x^{n}-x^{n-1}),x^{n+1}-x^n\Bigg)_h\\
 		=&-\frac{\epsilon^2}{2}[(\rho_0^{\prime}(X)(d_hx^{n+1})^{-1})^2,d_h(x^{n+1}-x^n)]_h+[F(\rho_0(X)),d_h(x^{n+1}-x^n)]_h\\
 		\ge&\frac{\epsilon^2}{2}[(\rho_0^{\prime}(X)(d_hx^{n+1})^{-1})^2,d_hx^{n+1}]_h-\frac{\epsilon^2}{2}[(\rho_0^{\prime}(X)(d_hx^{n})^{-1})^2,d_hx^{n}]_h\\
 		&+[F(\rho_0(X)),d_h(x^{n+1}-x^n)]_h.
 	\end{aligned}
 \end{equation}
 Denote $D_hx^n\mathcal{M}(\rho_0)=\ell_{h^n}$, similar to 
 the semi-discrete case, repeatedly applying the Cauchy-Schwarz inequality leads to
 \begin{align*}
 	 	&\left((\rho_0^{\prime}(X))^2((1+\frac{1}{2r_{n+1}})\ell_{h^n}^{-\frac{1}{2}}-\frac{1}{2r_{n+1}}\ell_{h^{n+1}}^{-\frac{1}{2}})(\frac{1}{2}\ell_{h^{n-1}}^{-\frac{1}{2}}+\frac{1}{2}\ell_{h^{n}}^{-\frac{1}{2}})(x^{n}-x^{n-1}),x^{n+1}-x^{n}\right)_h\\
 	\le &\frac{1}{4}\left((\rho_0^{\prime}(X))^2(\frac{(2r_{n+1}+1)^2+2r_{n+1}+1}{4r_{n+1}^2}\ell_{h^{n}}^{-1}+\frac{2r_{n+1}+2}{4r_{n+1}^2}\ell_{h^{n+1}}^{-1})(x^{n+1}-x^{n}),x^{n+1}-x^{n}\right)_h\\
 	&+\frac{1}{2}\left((\rho_0^{\prime}(X))^2(\ell_{h^{n-1}}^{-1}+\ell_{h^n}^{-1})(x^{n}-x^{n-1}),x^{n}-x^{n-1}\right)_h\\
 	\le &\frac{1}{4}\frac{(2r_{n+1}+1)^2+2r_{n+1}+1}{4r_{n+1}^2}\left((\rho_0^{\prime}(X))^2(\ell_{h^n}^{-1}+\ell_{h^{n+1}}^{-1})(x^{n+1}-x^{n}),x^{n+1}-x^{n}\right)_h\\
 	&+\frac{1}{2}\left((\rho_0^{\prime}(X))^2(\ell_{h^{n-1}}^{-1}+\ell_{h^n}^{-1})(x^{n}-x^{n-1}),x^{n}-x^{n-1}\right)_h.
 \end{align*}
 Then we obtain the following estimate for the first and third terms on the left hand side of \eqref{eq: discrete ac 1}:
 \begin{align*}
 	&-\frac{2r_{n+1}+1}{2\tau_{n+1}(r_{n+1}+1)}\left(\frac{(\rho_0^{\prime}(X))^2}{\mathcal{M}(\rho_0(X))}\left((D_hx^{n+1})^{-1}+(D_hx^{n})^{-1}\right)(x^{n+1}-x^{n}),x^{n+1}-x^n\right)_h\\
 	&+\frac{r_{n+1}^2}{2\tau_{n+1}(r_{n+1}+1)}\Bigg(\frac{(\rho_0^{\prime}(X))^2}{\mathcal{M}(\rho_0(X))}\left((1+\frac{1}{2r_{n+1}})(D_hx^n)^{-\frac{1}{2}}-\frac{1}{2r_{n+1}}(D_hx^{n+1})^{-\frac{1}{2}}\right)\\
 	&\times\left((D_hx^{n-1})^{-\frac{1}{2}}-(D_hx^n)^{-\frac{1}{2}}\right)(x^{n}-x^{n-1}),x^{n+1}-x^n\Bigg)_h\\
 	\le&-\frac{2r_{n+1}+1}{2\tau_{n+1}(r_{n+1}+1)}\left(\frac{(\rho_0^{\prime}(X))^2}{\mathcal{M}(\rho_0(X))}\left((D_hx^{n+1})^{-1}+(D_hx^{n})^{-1}\right)(x^{n+1}-x^{n}),x^{n+1}-x^n\right)_h\\
 	&+\frac{(2r_{n+1}+1)^2+2r_{n+1}+1}{16\tau_{n+1}(r_{n+1}+1)}\left(\frac{(\rho_0^{\prime}(X))^2}{\mathcal{M}(\rho_0(X))}((D_hx^n)^{-1}+(D_hx^{n+1})^{-1})(x^{n+1}-x^{n}),x^{n+1}-x^{n}\right)_h\\
 	&+\frac{r_{n+1}^2}{2\tau_{n+1}(r_{n+1}+1)}\left(\frac{(\rho_0^{\prime}(X))^2}{\mathcal{M}(\rho_0(X))}((D_hx^{n-1})^{-1}+(D_hx^n)^{-1})(x^{n}-x^{n-1}),x^{n}-x^{n-1}\right)_h\\
 	 =&-\frac{(2r_{n+1}+1)(3-r_{n+1})}{8\tau_{n+1}(r_{n+1}+1)}\left(\frac{(\rho_0^{\prime}(X))^2}{\mathcal{M}(\rho_0(X))}((D_hx^n)^{-1}+(D_hx^{n+1})^{-1})(x^{n+1}-x^{n}),x^{n+1}-x^{n}\right)_h\\
 	&+\frac{r_{n+1}^2}{2\tau_{n+1}(r_{n+1}+1)}\left(\frac{(\rho_0^{\prime}(X))^2}{\mathcal{M}(\rho_0(X))}((D_hx^{n-1})^{-1}+(D_hx^n)^{-1})(x^{n}-x^{n-1}),x^{n}-x^{n-1}\right)_h,
 \end{align*}
 a combination of above inequality with \eqref{eq: discrete ac 1} shows
 \begin{equation*}
 	\begin{aligned}
 		&E_h^{n+1}+\frac{(2r_{n+1}+1)(3-r_{n+1})}{8\tau_{n+1}(r_{n+1}+1)}\left(\frac{(\rho_0^{\prime}(X))^2}{\mathcal{M}(\rho_0(X))}((D_hx^n)^{-1}+(D_hx^{n+1})^{-1})(x^{n+1}-x^{n}),x^{n+1}-x^{n}\right)_h\\
 		&+\eta\tau_{n+1}\left[\log(d_hx^{n+1}) -\log(d_hx^{n}),d_h(x^{n+1}-x^n)\right]_h\\
 		\le &E_h^n+\frac{r_{\max}}{2\tau_{n}(r_{\max}+1)}\left(\frac{(\rho_0^{\prime}(X))^2}{\mathcal{M}(\rho_0(X))}((D_hx^{n-1})^{-1}+(D_hx^n)^{-1})(x^{n}-x^{n-1}),x^{n}-x^{n-1}\right)_h.
 	\end{aligned}
 \end{equation*}
 then the energy dissipation law will hold  once the following inequality holds:
 \begin{align*}
 	h(r_{n+1})=\frac{(2r_{n+1}+1)(3-r_{n+1})}{8(r_{n+1}+1)}\ge \frac{r_{\max}}{2(r_{\max}+1)},
 \end{align*} 
  which implies that if the maximum time-step ratio satisfies $0<r_{\max}\le\frac{3}{2}$, the energy dissipation law \eqref{eq: ac energy discrete} holds.
\end{proof}

\subsection{Conservative models}\label{sec:fully discrete}
We construct below a full discrete numerical scheme for the Wasserstein gradient flows \eqref{eq:wasserstein}. 

Define the admissible set $S_{ad}:=\{{\bm x}: \ x_{j+1}>x_{j}\ \text{for}\ j=0, 1, \cdots, M_x-1, \ \text{and} \ x_0=X_0,\ x_{M_x}=X_{M_x}\}$. 
Given ${\bm x}^{n-1}$, ${\bm x}^{n}$, $\tau_{n+1}$, $r_{n+1}$ and the initial value $\rho_0({\bm X})$, solving $({\bm x}^{n+1},\rho^{n+1})$ from the following finite dimensional minimization problem:
\begin{equation}\label{eq:discre optim11}
		\begin{cases}
			&\displaystyle{\bm x}^{n+1}:=\arg\inf_{{\bm x}\in S_{ad}}\frac{1+2r_{n+1}}{2\tau_{n+1}(1+r_{n+1})}\left(\rho_0({\bm X}),|{\bm x}-\hat{\bm x}^{n}|^2\right)_h+E_h({\bm x})\\
			&\qquad\qquad\qquad\qquad\qquad\displaystyle+\frac{\tau_{n+1}}{2}\left(|D_h( {\bm x}- {\bm x}^{n})|^2,1\right)_h,\\
			&\displaystyle\rho_{j+\frac{1}{2}}^{n+1}:=\frac{\rho_0(X_{j+\frac{1}{2}})}{(D_hx^{n+1})_{j+\frac{1}{2}}},\qquad j=0, 1, \cdots, M_x-1,
		\end{cases}
\end{equation}
	where ${\bm x}=(x_0,x_1,\ldots,x_{M_x-1},x_{M_x})$, 
	${x}_{j+\frac{1}{2}}:=\frac{1}{2}(x_{j+1}+x_{j})$, $\hat{\bm x}^{n}=\frac{(1+r_{n+1})^2}{1+2r_{n+1}}{\bm x}^{n}-\frac{r_{n+1}^2}{1+2r_{n+1}}{\bm x}^{n-1}$, and the discrete energy is defined by $	E_h({\bm x}):=\left(F\left(\frac{\rho_0({\bm X})}{D_h{\bm x}}\right),D_h{\bm x}\right)_h$.
	
Taking the  variational of \eqref{eq:discre optim11} with respect to ${\bm x}$ leads to the weak formulation with ${\bm x}={\bm x}^{n+1}$:
\begin{equation}\label{eq:weak formula}
\begin{aligned}
	&\dfrac{1+2r_{n+1}}{\tau_{n+1}(1+r_{n+1})}\left(\rho_0({\bm X})({\bm x}^{n+1}-\hat{\bm x}^{n}),\delta{\bm x}\right)_h+\tau_{n+1}\left(D_h({\bm x}^{n+1}-{\bm x}^{n}),D_h\delta{\bm x}\right)_h\\
	&\quad+\left(\frac{\delta E_h}{\delta {\bm x}}({\bm x}^{n+1}),\delta{\bm x}\right)_h=0.
\end{aligned} 
\end{equation}
Taking the test function $\delta{\bm x}=(\delta x_0,\delta x_1,\cdots,\delta x_{M_x-1},\delta x_{M_x})$ with $\delta x_i= \delta_{ij}$ (Dirac function) for $i,j=0,1\cdots,M_x-1$, we obtain the following second-order scheme with variable time steps:
\begin{align}
&\dfrac{1+2r_{n+1}}{2\tau_{n+1}(1+r_{n+1})}\rho_0(X_{j+\frac{1}{2}})(x_{j+\frac{1}{2}}^{n+1}-\hat{x}_{j+\frac{1}{2}}^{n})h+\dfrac{1+2r_{n+1}}{2\tau_{n+1}(1+r_{n+1})}\rho_0(X_{j-\frac{1}{2}})(x_{j-\frac{1}{2}}^{n+1}-\hat{x}_{j-\frac{1}{2}}^{n})h\nonumber\\
&-\tau_{n+1}\frac{(x_{j+1}^{n+1}-x_{j+1}^{n})-2(x_j^{n+1}-x_{j}^{n})+x_{j-1}^{n+1}-x_{j-1}^{n}}{h^2}h+\frac{\delta E_h}{\delta x_j}({\bm x}^{n+1})=0,\label{eq:scheme bdf2}\\
&\rho_{j+\frac{1}{2}}^{n+1}:=\frac{\rho_0(X_{j+\frac{1}{2}})}{(D_hx^{n+1})_{j+\frac{1}{2}}},\qquad j=0, 1, \cdots, M_x-1,\label{schem:2-1}
\end{align}
with the initial and boundary conditions
\begin{align}
{\bm x}^0=(X_0,X_1,\cdots,X_{M_x})\quad\text{and}\quad x_{0}^{n+1}=X_0,\quad x_{M_x}^{n+1}=X_{M_x}.\label{schem:1-3}
\end{align}
	We can use the first-order numerical scheme proposed in \cite{cheng2024flow} to calculate ${\bm x}^1$ and $\rho^1$.

\begin{theorem}\label{thm:1d}
	Assume the initial value $\rho_0(X)>0$ for $X\in\Omega_0^X$, the energy density $F(s)$ satisfies $F(s)\ge 0$ for $s\ge 0$, and $\lim\limits_{s\rightarrow 0}F(\frac{1}{s})s=\infty$,  
	then the scheme \eqref{eq:scheme bdf2}-\eqref{schem:2-1} has a unique solution $\bm{x}^{n+1}\in S_{ad}$ when $\frac{\delta^2 E_h}{\delta{\bm x}^2}>0$, 
	the solution is positive $\rho_{j+\frac{1}{2}}^{n+1}>0$ for $j=0, 1, \cdots, M_x-1$, and mass conservative. 
	Moreover, if the time-step ratio satisfies $0<r_{n+1}\le r_{\max}\le\frac{3+\sqrt{17}}{2}$, the energy is dissipative: 
	\begin{equation}
	\begin{split}
	&E_h({\bm x}^{n+1})+\dfrac{r_{\max}}{2\tau_{n+1}(1+r_{\max})}\left(\rho_0({\bm X})({\bm x}^{n+1}-{\bm x}^{n})^2,1\right)_h\\
	\le& E_h({\bm x}^{n})+\dfrac{r_{\max}}{2\tau_{n}(1+r_{\max})}\left(\rho_0({\bm X})({\bm x}^{n}-{\bm x}^{n-1})^2,1\right)_h.
	\end{split}
	\end{equation}
\end{theorem}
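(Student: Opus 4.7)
The plan is to treat \eqref{eq:scheme bdf2}--\eqref{schem:2-1} as the Euler--Lagrange equations for the finite-dimensional minimization problem \eqref{eq:discre optim11} and then transcribe the semi-discrete argument used to prove the energy dissipation of \eqref{eq:jko_2} into the discrete inner product $(\cdot,\cdot)_h$. I will split the proof into three blocks corresponding to the three claims: (i) existence, uniqueness and positivity; (ii) mass conservation; (iii) energy dissipation under the time-step ratio constraint.

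For (i), denote the objective of \eqref{eq:discre optim11} by $J(\bm{x})$. The quadratic term $\frac{1+2r_{n+1}}{2\tau_{n+1}(1+r_{n+1})}(\rho_0(\bm{X}),|\bm{x}-\hat{\bm{x}}^n|^2)_h$ is strictly convex and coercive in $\bm{x}$ because $\rho_0(X_{j+\frac12})>0$ at every node, and the viscous term $\frac{\tau_{n+1}}{2}(|D_h(\bm{x}-\bm{x}^n)|^2,1)_h$ is convex. Combined with the hypothesis $\frac{\delta^2 E_h}{\delta \bm{x}^2}>0$, the functional $J$ is strictly convex on the open convex cone $S_{ad}$. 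The boundary of $S_{ad}$ is reached only when some gap $x_{j+1}-x_j\to 0^+$; writing $F(\rho_0/D_h\bm{x})\,D_h\bm{x}=F(1/s)\,s$ with $s=(x_{j+1}-x_j)/(h\rho_0(X_{j+\frac12}))$, the barrier assumption $\lim_{s\to 0}F(1/s)s=\infty$ forces $E_h(\bm{x})\to\infty$ along any such sequence. Since $J$ is also coercive as $|\bm{x}|\to\infty$ by the quadratic terms, a unique interior minimizer $\bm{x}^{n+1}\in S_{ad}$ exists, giving $(D_h x^{n+1})_{j+\frac12}>0$ and therefore $\rho_{j+\frac12}^{n+1}>0$. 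Mass conservation for (ii) is then immediate from the defining identity $\rho_{j+\frac12}^{n+1}(x_{j+1}^{n+1}-x_j^{n+1})=\rho_0(X_{j+\frac12})h$, which upon summing over $j$ yields $\sum_j\rho_{j+\frac12}^{n+1}(x_{j+1}^{n+1}-x_j^{n+1})=\sum_j\rho_0(X_{j+\frac12})h$, the discrete analogue of $\rho\,\mathrm{d}\bm{x}=\rho_0\,\mathrm{d}\bm{X}$.

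For (iii), I will test \eqref{eq:scheme bdf2} against $\bm{x}^{n+1}-\bm{x}^n$ in the $(\cdot,\cdot)_h$ inner product. Convexity of $E_h$ gives $(\frac{\delta E_h}{\delta\bm{x}}(\bm{x}^{n+1}),\bm{x}^{n+1}-\bm{x}^n)_h\ge E_h(\bm{x}^{n+1})-E_h(\bm{x}^n)$, and summation by parts identifies the viscosity contribution as $\tau_{n+1}\bigl\||D_h(\bm{x}^{n+1}-\bm{x}^n)|\bigr\|_h^2\ge 0$, which can be dropped. The BDF2 kinetic term produces
\[
\frac{1+2r_{n+1}}{\tau_{n+1}(1+r_{n+1})}\Bigl(\rho_0(\bm{X}),|\bm{x}^{n+1}-\bm{x}^n|^2-\tfrac{r_{n+1}^2}{1+2r_{n+1}}(\bm{x}^{n+1}-\bm{x}^n)(\bm{x}^n-\bm{x}^{n-1})\Bigr)_h,
\]
and applying Cauchy--Schwarz to split the cross term (exactly as in the proof of Theorem 3.1) yields the lower bound $\frac{2+4r_{n+1}-r_{n+1}^2}{2\tau_{n+1}(1+r_{n+1})}(\rho_0,|\bm{x}^{n+1}-\bm{x}^n|^2)_h-\frac{r_{n+1}^2}{2\tau_{n+1}(1+r_{n+1})}(\rho_0,|\bm{x}^n-\bm{x}^{n-1}|^2)_h$. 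Using $r_{n+1}=\tau_{n+1}/\tau_n$, the negative term at level $n$ rescales into $\frac{r_{n+1}}{2\tau_n(1+r_{n+1})}(\rho_0,|\bm{x}^n-\bm{x}^{n-1}|^2)_h$, which is bounded above by $\frac{r_{\max}}{2\tau_n(1+r_{\max})}(\rho_0,|\bm{x}^n-\bm{x}^{n-1}|^2)_h$. To close the telescoping inequality in terms of the two-step energy $E_h(\bm{x}^{n+1})+\frac{r_{\max}}{2\tau_{n+1}(1+r_{\max})}(\rho_0,|\bm{x}^{n+1}-\bm{x}^n|^2)_h$, I need $\frac{2+4r_{n+1}-r_{n+1}^2}{1+r_{n+1}}\ge\frac{r_{\max}}{1+r_{\max}}$; the scalar analysis already carried out for the semi-discrete case (monotonicity of $g(r)=(2+4r-r^2)/(1+r)$ on $(0,-1+\sqrt{3})$ versus $(-1+\sqrt{3},\infty)$ together with the boundary inequality $2+3r_{\max}-r_{\max}^2\ge 0$) shows that this holds precisely when $0<r_{\max}\le\frac{3+\sqrt{17}}{2}$.

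The only genuine obstacle I foresee is in part (i), where strict convexity of $J$ alone does not immediately rule out a minimizing sequence escaping to the boundary of $S_{ad}$; the barrier property of $F$ is essential here, and I will need to state it carefully to guarantee the minimizer is interior rather than merely an infimum. Once (i) is secured, the energy-dissipation calculation in (iii) is a mechanical translation of the semi-discrete BDF2 argument into the discrete $(\cdot,\cdot)_h$ framework, with the Cauchy--Schwarz step being identical term by term and the sharp threshold $(3+\sqrt{17})/2$ arising from the same scalar inequality.
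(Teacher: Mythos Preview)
Your proposal is correct and follows essentially the same route as the paper. For the energy-dissipation part (iii) the paper does exactly what you outline: it tests the weak formulation \eqref{eq:weak formula} with $\delta\bm{x}=\bm{x}^{n+1}-\bm{x}^n$, uses convexity of $E_h$ and Cauchy--Schwarz on the BDF2 cross term to reach the coefficient $\frac{2+4r_{n+1}-r_{n+1}^2}{2\tau_{n+1}(1+r_{n+1})}$, and then invokes the same scalar inequality $g(r_{\max})\ge\frac{r_{\max}}{1+r_{\max}}$ analyzed in the semi-discrete theorem. For parts (i)--(ii) the paper actually omits the argument entirely and just cites \cite{cheng2020new}; your convexity-plus-barrier sketch is more explicit than what the paper provides and is the standard way such results are established.
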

\begin{proof}
	The first two properties can be derived using similar technique in \cite{cheng2020new}, the proof of which is omitted here. To prove the energy dissipation law, we set $\delta{\bm x}={\bm x}^{n+1}-{\bm x}^{n}$ in \eqref{eq:weak formula} and obtain
	\begin{align*}
	&\dfrac{1+2r_{n+1}}{\tau_{n+1}(1+r_{n+1})}\left(\rho_0({\bm X})({\bm x}^{n+1}-{\bm x}^{n})^2,1\right)_h-	\dfrac{r_{n+1}^2}{\tau_{n+1}(1+r_{n+1})}\left(\rho_0({\bm X})({\bm x}^{n}-{\bm x}^{n-1}),{\bm x}^{n+1}-{\bm x}^{n}\right)_h\\
	&	+\tau_{n+1}(|D_h({\bm x}^{n+1}-{\bm x}^{n})|^2,1)_h+\left(\frac{\delta E_h}{\delta {\bm x}}({\bm x}^{n+1}),{\bm x}^{n+1}-{\bm x}^{n}\right)_h=0,
	\end{align*}
	using the Cauchy-Schwarz inequality and the convexity of $E({\bm x})$ with respect to ${\bm x}$ shows that
	\begin{align*}
	&\dfrac{2+4r_{n+1}-r_{n+1}^2}{2\tau_{n+1}(1+r_{n+1})}\left(\rho_0({\bm X})({\bm x}^{n+1}-{\bm x}^{n})^2,1\right)_h-		\dfrac{r_{n+1}^2}{2\tau_{n+1}(1+r_{n+1})}\left(\rho_0({\bm X})({\bm x}^{n}-{\bm x}^{n-1})^2,1\right)_h\\
	+&\tau_{n+1}(|\nabla_h({\bm x}^{n+1}-{\bm x}^{n})|^2,1)_h+E_h({\bm x}^{n+1})\le E_h({\bm x}^{n}),
	\end{align*}
	then we have
	\begin{align*}
	&E_h({\bm x}^{n+1})+\dfrac{2+4r_{n+1}-r_{n+1}^2}{2\tau_{n+1}(1+r_{n+1})}\left(\rho_0({\bm X})({\bm x}^{n+1}-{\bm x}^{n})^2,1\right)_h\\
	\le& E_h({\bm x}^{n})+		\dfrac{r_{\max}}{2\tau_{n}(1+r_{\max})}\left(\rho_0({\bm X})({\bm x}^{n}-{\bm x}^{n-1})^2,1\right)_h.
	\end{align*}
	Similar to the semi-discrete case discussed above, the energy dissipation law can be derived under the condition that $	g(r_{\max})\ge\frac{r_{\max}}{1+r_{\max}}$, which holds for  $0<r_{\max}\le\frac{3+\sqrt{17}}{2}$.
\end{proof}

\begin{remark}
	In fact, the energy densities of the PME and Fokker-Planck equation are  $F(s)=\frac{1}{m-1}s^m$, $m>1$ and  $F(s)=s\log s+sV(x)$, respectively. 
	 Both satisfy the assumption that  $F(s)\ge 0$ for $s\ge 0$, and $\lim\limits_{s\rightarrow 0}F(\frac{1}{s})s=\infty$.
\end{remark}

	\subsection{Adaptive time-stepping methods in 2D}\label{sec:two dim}
Now we generalize adaptive time-stepping, Lagrangian methods based on flow dynamic approach  into two-dimensional case. 

Denote ${\bm x}=(x,y)$, ${\bm X}=(X,Y)$, and the Jacobian matrix $\frac{\partial {\bm x}}{\partial {\bm X}}=\frac{\partial (x,y)}{\partial (X,Y)}$. 
Set $\Omega_0^X=[-L_x,L_x]\times[-L_y,L_y]$ with $L_x$, $L_y>0$, and $\Omega_0^x=\Omega_0^X$. 
Given $M_x$, $M_y\in\mathbb{N}$, and define the spatial grid size $h_x=\frac{2L_x}{M_x}$, $h_y=\frac{2L_y}{M_y}$. 
Let $X_{ij}=X_0+jh_x$, $Y_{ij}=Y_0+ih_y$ for $0\le j\le M_x$, $ 0\le i\le M_y$, and $\rho_{ij}^0=\rho(X_{ij},Y_{ij},0)\ge 0$.


Given ${\bm x}^0={\bm X}$ and the initial value $\rho_0({\bm X})$, the first step ${\bm x}^1$ is calculated by the first-order scheme in \cite{cheng2024flow}. Then the fully implicit BDF2 scheme with variable time steps can be proposed: 

{\bf Implicit numerical scheme}. 
$\forall n\ge1$, given ${\bm x}^{n-1}$, ${\bm x}^{n}$, $\tau_{n+1}$, $r_{n+1}$, solving $(x^{n+1},y^{n+1})$ from
\begin{align}
	&\rho_{ij}^0D_2x_{ij}^{n+1}-\varepsilon\tau_{n+1}\Delta_{\bm X}(x^{n+1}_{ij}-x^{n}_{ij}) +\frac{\delta \tilde{E}_{h,2}}{\delta x}({\bm x}_{ij}^{n+1})=0,\label{eq:wasserstein 2d-1}\\
	&\rho_{ij}^0D_2y_{ij}^{n+1}-\varepsilon\tau_{n+1}\Delta_{\bm X}(y^{n+1}_{ij}-y^{n}_{ij}) +\frac{\delta \tilde{E}_{h,2}}{\delta y}({\bm x}_{ij}^{n+1})=0,\label{eq:wasserstein 2d-2}
\end{align}
where $D_2a_{ij}^{n+1}:=\frac{(1+2r_{n+1})a_{ij}^{n+1}-(1+r_{n+1})^2a_{ij}^n+r_{n+1}^2a_{ij}^{n-1}}{\tau_{n+1}(1+r_{n+1})}$ and
$\tilde{E}_{h,2}({\bm x}):=\sum_{i,j}F(\frac{\rho_{ij}^0}{\text{det}\frac{\partial {\bm x}}{\partial {\bm X}}|_{ij}})\text{det}\frac{\partial {\bm x}}{\partial {\bm X}}|_{ij}$. 
The Dirichlet boundary condition is considered: $ {\bm x }^{n+1}|_{\partial\Omega}={\bm X}|_{\partial\Omega}$. 
Then $\rho_{ij}^{n+1}$ is derived by
\begin{align}\label{eq:rho in 2d}
	\rho_{ij}^{n+1}=\frac{\rho_{ij}^0}{\mathcal{F}_{ij}^{n+1}}\quad\text{with}\quad 
	\mathcal{F}_{ij}^{n+1}=\left |\begin{array}{cc}
		\frac{\partial x_{ij}^{n+1}}{\partial X} &\frac{\partial y_{ij}^{n+1}}{\partial X}   \\
		\frac{\partial x_{ij}^{n+1}}{\partial Y} &\frac{\partial y_{ij}^{n+1}}{\partial Y}   \\
	\end{array}\right|=\left |\begin{array}{cc}
		\frac{ x_{i,j+1}^{n+1}-x_{i,j-1}^{n+1}}{2h_x} &\frac{ y_{i,j+1}^{n+1}-y_{i,j-1}^{n+1}}{2h_x}   \\
		\frac{ x_{i+1,j}^{n+1}-x_{i-1,j}^{n+1}}{2h_y} &\frac{y_{i+1,j}^{n+1}-y_{i-1,j}^{n+1}}{2h_y}   \\
	\end{array}\right|.
\end{align}
The scheme \eqref{eq:wasserstein 2d-1}-\eqref{eq:wasserstein 2d-2} should be solved in the admissible set $E_{ad}=\{{\bm x}:\ \text{det}\frac{\partial {\bm x}}{\partial {\bm X}}|_{ij}>0 \ \text{for all}\ i,j\in\mathbb{N}, \ {\bm x}|_{\partial\Omega} ={\bm X}|_{\partial\Omega}\}$, and the solution to which is the minimizer of the following minimization problem: 
\begin{align}\label{min:2d}
	{\bm x}^{n+1}:=\arg\inf_{\bm{x}\in E_{ad}}J_{n+1}({\bm x}),
\end{align}
where $J_{n+1}({\bm x}):=E_{h,2}({\bm x})+\sum_{i,j}\frac{1+2r_{n+1}}{2\tau_{n+1}(1+r_{n+1})}\rho_{ij}^0|{\bm x}_{ij}-\hat{\bm x}_{ij}^{n}|^2h_xh_y+\frac{\varepsilon\tau_{n+1}}{2}\sum_{i,j}(|\frac{{\bm x}_{i,j+1}-{\bm x}_{i,j+1}^{n}-{\bm x }_{i,j}+{\bm x}_{i,j}^{n}}{h_x}|^2+|\frac{{\bm x}_{i+1,j}-{\bm x}_{i+1,j}^{n}-{\bm x}_{i,j}+{\bm x}_{i,j}^{n}}{h_y}|^2)h_xh_y$, 
and $E_{h,2}({\bm x}):=\tilde{E}_{h,2}({\bm x})h_xh_y$. We obtain the following  result for the scheme \eqref{eq:wasserstein 2d-1}-\eqref{eq:wasserstein 2d-2}.
\begin{theorem}
	Assume the energy density $F(s)\ge 0$ for $s\ge 0$, and satisfies $\lim\limits_{s\rightarrow 0}F(\frac{1}{s})s=\infty$. 
	Then there exists a solution ${\bm x}^{n+1}\in E_{ad}$ to the nonlinear numerical scheme \eqref{eq:wasserstein 2d-1}-\eqref{eq:wasserstein 2d-2}, and the following energy dissipation law holds under the condition that $0<r_{n+1}\le r_{\max}\le\frac{5}{4}$:
	\begin{equation}
		\begin{aligned}
			&E_{h,2}({\bm x}^{n+1})+\sum_{i,j}\frac{r_{\max}^3}{\tau_{n+1}(1+r_{\max})(1+2r_{\max})}\rho_{ij}^0|{\bm x}_{ij}^{n+1}-{\bm x}_{ij}^{n}|^2h_xh_y\\
			\le& E_{h,2}({\bm x}^{n})+\sum_{i,j}\frac{r_{\max}^3}{\tau_{n}(1+r_{\max})(1+2r_{\max})}\rho_{ij}^0|{\bm x}_{ij}^{n}-{\bm x}_{ij}^{n-1}|^2h_xh_y.
		\end{aligned}
	\end{equation}
\end{theorem}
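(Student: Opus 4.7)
The scheme \eqref{eq:wasserstein 2d-1}--\eqref{eq:wasserstein 2d-2} is precisely the Euler--Lagrange system of the finite-dimensional minimization problem \eqref{min:2d}, so the plan is to first establish existence of a minimizer of $J_{n+1}$ in the admissible set $E_{ad}$, and then derive the energy inequality by directly comparing $J_{n+1}({\bm x}^{n+1})$ with $J_{n+1}({\bm x}^n)$.

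For existence, I would argue by the direct method. The functional $J_{n+1}$ is continuous on the open set $E_{ad}$ (which is nonempty since it contains ${\bm x}^n$) and bounded below because $F\ge 0$. The quadratic penalty together with the viscous regularization provides coercivity in the finite-dimensional affine space of ${\bm x}$ satisfying the Dirichlet boundary condition. The key point is that $\lim_{s\to 0}F(1/s)s=\infty$ forces $F(\rho_{ij}^0/\mathcal{F}_{ij})\mathcal{F}_{ij}\to\infty$ whenever some $\mathcal{F}_{ij}\to 0^+$, so a minimizing sequence cannot approach the boundary of $E_{ad}$. Hence the infimum is attained at an interior point ${\bm x}^{n+1}\in E_{ad}$, which satisfies \eqref{eq:wasserstein 2d-1}--\eqref{eq:wasserstein 2d-2}, and the density $\rho^{n+1}$ defined by \eqref{eq:rho in 2d} is automatically positive.

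For the energy inequality, I would use the variational property ${\bm x}^{n+1}=\arg\min J_{n+1}$ via $J_{n+1}({\bm x}^{n+1})\le J_{n+1}({\bm x}^n)$. The viscosity term vanishes at ${\bm x}={\bm x}^n$, and a direct computation using $\hat{\bm x}^n=\frac{(1+r_{n+1})^2}{1+2r_{n+1}}{\bm x}^n-\frac{r_{n+1}^2}{1+2r_{n+1}}{\bm x}^{n-1}$ gives ${\bm x}^n-\hat{\bm x}^n=-\frac{r_{n+1}^2}{1+2r_{n+1}}({\bm x}^n-{\bm x}^{n-1})$. Expanding
\[
|{\bm x}^{n+1}-\hat{\bm x}^n|^2=|{\bm x}^{n+1}-{\bm x}^n|^2+2({\bm x}^{n+1}-{\bm x}^n)\cdot({\bm x}^n-\hat{\bm x}^n)+|{\bm x}^n-\hat{\bm x}^n|^2
\]
and controlling the cross term by Cauchy--Schwarz with a free parameter $\alpha>0$, followed by $\tau_{n+1}=r_{n+1}\tau_n$, recasts the comparison into the form
\[
E_{h,2}({\bm x}^{n+1})+\frac{C(r_{n+1},\alpha)}{\tau_{n+1}}\sum_{i,j}\rho_{ij}^0|{\bm x}_{ij}^{n+1}-{\bm x}_{ij}^n|^2h_xh_y\le E_{h,2}({\bm x}^n)+\frac{D(r_{n+1},\alpha)}{\tau_n}\sum_{i,j}\rho_{ij}^0|{\bm x}_{ij}^n-{\bm x}_{ij}^{n-1}|^2h_xh_y.
\]
Requiring both $C$ and $D$ to dominate the target coefficient $\frac{r_{\max}^3}{(1+r_{\max})(1+2r_{\max})}$ uniformly in $r_{n+1}\in(0,r_{\max}]$ reduces, after optimizing in $\alpha$, to an elementary quadratic inequality that one checks by hand to hold exactly when $r_{\max}\le 5/4$.

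The main technical obstacle is that, unlike the 1D setting of Theorem \ref{thm:1d}, the 2D discrete energy $E_{h,2}({\bm x})=\sum_{i,j}F(\rho_{ij}^0/\mathcal{F}_{ij})\mathcal{F}_{ij}h_xh_y$ depends on ${\bm x}$ through the Jacobian determinant $\mathcal{F}_{ij}$, which is a genuinely multilinear, non-convex function of the nodal coordinates. Consequently the clean convexity inequality $(\frac{\delta E_h}{\delta {\bm x}}({\bm x}^{n+1}),{\bm x}^{n+1}-{\bm x}^n)\ge E_h({\bm x}^{n+1})-E_h({\bm x}^n)$ used in the 1D proof is unavailable. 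The variational formulation \eqref{min:2d} precisely circumvents this issue: comparing $J_{n+1}$ at ${\bm x}^{n+1}$ and ${\bm x}^n$ does not rely on convexity of $E_{h,2}$ at all, and this change of perspective is what makes the analysis go through, at the cost of the tighter (but still explicit) ratio bound $r_{\max}\le 5/4$.
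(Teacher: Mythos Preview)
Your proposal is correct and follows essentially the same route as the paper: existence via the direct method on the minimization problem \eqref{min:2d} (using $\lim_{s\to 0}F(1/s)s=\infty$ to keep minimizing sequences away from $\partial E_{ad}$), then the energy inequality from $J_{n+1}({\bm x}^{n+1})\le J_{n+1}({\bm x}^n)$ combined with a Young-type splitting of $|{\bm x}^{n+1}-\hat{\bm x}^n|^2$; the paper simply fixes the Young parameter via $|a-b|^2\ge\tfrac{1}{2}|a|^2-|b|^2$ rather than optimizing over a free $\alpha$, and then reduces to the numerical condition $\tfrac{1}{4}\ge\tfrac{r_{\max}^3}{(1+r_{\max})(1+2r_{\max})}$ yielding $r_{\max}\le\tfrac{5}{4}$. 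Your closing observation that the minimization comparison replaces the unavailable convexity of $E_{h,2}$ in 2D is exactly the point.
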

\begin{proof}
	The existence of the solution to scheme \eqref{eq:wasserstein 2d-1}-\eqref{eq:wasserstein 2d-2} is equivalent to the existence of the minimizer of $J_{n+1}({\bm x})$ in the admissible set $E_{ad}$. Then we turn to prove the existence of the minimizer of the minimization problem \eqref{min:2d}. If the minimizer lies on the boundary of the admissible set, that is ${\bm x}\in \partial E_{ad}$, we have $J_{n+1}({\bm x})=\infty$, which is a contradiction. Following the proof in \cite{liu2020lagrangian,carrillo2018lagrangian}, the claim of the theorem will be derived once we show that the sub-level set
	$$	\mathcal{S}:=\Big\{{\bm x}\in E_{ad}:\ J_{n+1}({\bm x})\le E_{h,2}({\bm x}^{n})+\sum_{i,j}\frac{r_{n+1}^4}{2\tau_{n+1}(1+r_{n+1})(1+2r_{n+1})}\rho_{ij}^0|{\bm x}_{ij}^{n}-{\bm x}_{ij}^{n-1}|^2h_xh_y:=\gamma\Big\}$$
	is a non-empty compact subset of $\mathbb{R}^2$. Clearly, ${\bm x}^{n}\in \mathcal{S}$, so it is non-empty. Similar to \cite{cheng2024flow, liu2020lagrangian}, the boundedness and closedness of $\mathcal{S}$ can be established. Consequently, $\mathcal{S}$ is a non-empty compact subset of $\mathbb{R}^2$. 
	
	If ${\bm x}^{n+1}\in \mathcal{S}$ is a minimizer of the minimization problem \eqref{min:2d}, we have
	\begin{equation}\label{eq:energy2d}
		\begin{aligned}
			&E_{h,2}({\bm x}^{n+1})+\sum_{i,j}\frac{1+2r_{n+1}}{2\tau_{n+1}(1+r_{n+1})}\rho_{ij}^0|{\bm x}_{ij}^{n+1}-\hat{\bm x}_{ij}^{n}|^2h_xh_y\\
			\le& E_{h,2}({\bm x}^{n})+\sum_{i,j}\frac{r_{n+1}^4}{2\tau_{n+1}(1+r_{n+1})(1+2r_{n+1})}\rho_{ij}^0|{\bm x}_{ij}^{n}-{\bm x}_{ij}^{n-1}|^2h_xh_y.
		\end{aligned}
	\end{equation}
	Using the inequality $|a-b|^2\ge\frac{1}{2}|a|^2-|b|^2$ with $a={\bm x}^{n+1}_{ij}-{\bm x}^{n}_{ij}$ and $b=\frac{r_{n+1}^2}{1+2r_{n+1}}({\bm x}^{n}_{ij}-{\bm x}^{n-1}_{ij})$ shows that $	\frac{1}{2}|{\bm x}_{ij}^{n+1}-{\bm x}_{ij}^{n}|^2-\frac{r_{n+1}^4}{(1+2r_{n+1})^2}|{\bm x}_{ij}^{n}-{\bm x}_{ij}^{n-1}|^2\le |{\bm x}_{ij}^{n+1}-\hat{\bm x}_{ij}^{n}|^2$, 
	substituting which into \eqref{eq:energy2d} leads to
	\begin{align*}
		&E_{h,2}({\bm x}^{n+1})+\sum_{i,j}\frac{1+2r_{n+1}}{4\tau_{n+1}(1+r_{n+1})}\rho_{ij}^0|{\bm x}_{ij}^{n+1}-{\bm x}_{ij}^{n}|^2h_xh_y\nonumber\\
		\le& E_{h,2}({\bm x}^{n})+\sum_{i,j}\frac{r_{n+1}^4}{\tau_{n+1}(1+r_{n+1})(1+2r_{n+1})}\rho_{ij}^0|{\bm x}_{ij}^{n}-{\bm x}_{ij}^{n-1}|^2h_xh_y\nonumber\\
		\le& E_{h,2}({\bm x}^{n})+\sum_{i,j}\frac{r_{\max}^3}{\tau_{n}(1+r_{\max})(1+2r_{\max})}\rho_{ij}^0|{\bm x}_{ij}^{n}-{\bm x}_{ij}^{n-1}|^2h_xh_y.\nonumber
	\end{align*}
	The energy dissipation law will hold under the condition that the maximum time-step ratio satisfies $		\frac{1+2r_{n+1}}{4(1+r_{n+1})}>\frac{1}{4}\ge \frac{r_{\max}^3}{(1+r_{\max})(1+2r_{\max})}$. 
	If the time-step ratio is chosen such that $0<r_{n+1}\le r_{\max}\le\frac{5}{4}$, where $\frac{5}{4}$ is computed numerically, then the proof is complete.
\end{proof}



	{\bf Explicit numerical scheme}. 
	Given ${\bm x}^0={\bm X}$ and the initial value $\rho_0({\bm X})$, the first step  $({\bm x}^1,\rho^1)$ is solved by the first-order scheme proposed in \cite{cheng2024flow}. Given $(x^{n},y^{n})$, $\forall n\ge1$, $(x^{n+1},y^{n+1})$ can be solved by the linear scheme: 
	\begin{align}
		&\rho_{ij}^0D_2x_{ij}^{n+1}-\varepsilon\tau_{n+1}\Delta_{\bm X}(x^{n+1}_{ij}-x^{n}_{ij})+\frac{\delta \tilde{E}_{h,2}}{\delta x}((1+r_{n+1}){\bm x}_{ij}^{n}-r_{n+1}{\bm x}_{ij}^{n-1})=0,\label{scheme:2d explicit1}\\
		&\rho_{ij}^0D_2y_{ij}^{n+1}-\varepsilon\tau_{n+1}\Delta_{\bm X}(y^{n+1}_{ij}-y^{n}_{ij})+\frac{\delta \tilde{E}_{h,2}}{\delta y}((1+r_{n+1}){\bm x}_{ij}^{n}-r_{n+1}{\bm x}_{ij}^{n-1})=0,\label{scheme:2d explicit2}
	\end{align}
	with the Dirichlet boundary condition $ {\bm x }^{n+1}|_{\partial\Omega}={\bm X}|_{\partial\Omega}$, and the last term $\frac{\delta \tilde{E}_{h,2}}{\delta{\bm x}}$ is an second-order extrapolation term.  Then the solution $\rho^{n+1}$ will be derived by \eqref{eq:rho in 2d}.

\section{Adaptive time-stepping strategy}\label{sec:strategy}
In this section, we describe our adaptive time-step strategy in 
Algorithm~\ref{algo,2} which determines time steps based on   the changes of trajectory and energy  \cite{huang2020parallel,hou2023implicit}. 
To be specific, the parameter $\gamma$ in \eqref{strategy1} reflects the sensitivity of the time step with respect to changes of trajectory, and $\beta$ in \eqref{strategy2} represents the sensitivity of the time step with respect to changes of energy.

\begin{algorithm}[!htbp]
	\caption{Adaptive time-stepping algorithm related to energy and trajectory changes}
	\begin{algorithmic}[1]\label{algo,2}
		\STATE Initialize $\tau_{n}$, ${\bm x}^{n-1}$, ${\bm x}^{n}$, $E^{n-1}$, $E^{n}$, $\tau_{\min}$, $\tau_{\max}$, $r_{\text{user}}$
		\STATE Set tolerances $\text{tol}$
		\STATE Compute the next time-step by the following strategy with $\delta_\tau a^{n}:=\frac{a^{n}-a^{n-1}}{\tau_{n}}$:
		\begin{align}
		&\tau_{n+1}=\min\left(\max\left(\tau_{\min},\frac{\tau_{\max}}{\sqrt{1+\gamma \|\delta_\tau{\bm x}^{n}\|^2}}\right),r_{\text{user}}\tau_{n}\right),\label{strategy1}\\
			\text{or}\quad&\tau_{n+1}=\min\left(\max\left(\tau_{\min},\frac{\tau_{\max}}{\sqrt{1+\beta |\delta_\tau E_h^{n}|^2}}\right),r_{\text{user}}\tau_{n}\right)\label{strategy2}
		\end{align}
		\STATE Compute ${\bm x}^{n+1}_{\text{2nd}}$ and $\rho^{n+1}_\text{2nd}$ by solving scheme \eqref{eq:scheme bdf2}-\eqref{schem:1-3} with time-step size $\tau_{n+1}$ 
		\STATE Compute 
		the Jacobian determinant $\mathrm{det}\frac{\partial {\bm x}^{n+1}}{\partial{\bm X}}$
		\IF{
			$\mathrm{det}\frac{\partial {\bm x}^{n+1}}{\partial{\bm X}}>0$}
		\STATE Accept step: ${\bm x}^{n+1} \gets {\bm x}^{n+1}_{\text{2nd}}$, $\rho^{n+1} \gets \rho^{n+1}_{\text{2nd}}$
		\ELSE
		\STATE Reject step and decrease time-step: $\tau_{n+1}=\frac{1}{2}\tau_{n+1}$ 
		and return to step 4
		\ENDIF
		\STATE \textbf{return} ${\bm x}^{n+1}$, $\rho^{n+1}$, $E^{n+1}$ and $\tau_{n+1}$
	\end{algorithmic}
\end{algorithm}

\section{Numerical experiments}\label{sec:num}
We present in this section some numerical experiments  to validate the accuracy and stability of the proposed numerical schemes.


\subsection{Non-conservative models in one dimension}
We first present numerical experiments for non-conservative models in one-dimension. 

 \subsubsection{Allen-Cahn equation}
  We will take the initial value $\rho_0(X)=1-X^2$, $X\in[-1,1]$, $\epsilon=0.01$, choose $\mathcal{M}(\rho)\equiv 1$ and $\mathcal{M}(\rho)=1-\rho^2$ to solve numerical solutions using scheme \eqref{scheme:ac fully discrete}.
  
{\bf convergence test.} Set $M_x=16\times 2^{i-1}$ and $N=625\times 2^{i-1}$ for $i=1$, 2, 3,  $4$, the reference numerical solution is computed with $i=7$. For the fixed time-step case, we set the time step $\tau=\frac{T}{N}$ and spatial size $\delta X=\frac{2}{M_x}$, and compute the convergence order by $\text{Order}(i)=\ln\left(\frac{\text{error}(i)}{\text{error}(i-1)}\right)/\ln\left(\frac{M_x(i)}{M_x(i-1)}\right)$. For the variable time-step case, the time step is chosen by $\tau_n=\frac{\sigma_nT}{\sum_{k=1}^{N}\sigma_k}$ with uniformly distributed random values $\sigma_n\in(0,1)$, $\forall n$, the convergence order is calculated by $	\text{Order}(i)=\ln\left(\frac{\text{error}(i)}{\text{error}(i-1)}\right)/\ln\left(\frac{\tau(i)}{\tau(i-1)}\right)$, where $\tau(i)$ represents the maximum time step. Numerical results displayed in Figure~\ref{fig:ac order} show that the convergence order of \eqref{scheme:ac fully discrete} is second order.
 \begin{figure}[!htb]
 	\centering
 	\subfigure[Fixed time step]{	
 	\includegraphics[width=0.4\textwidth]{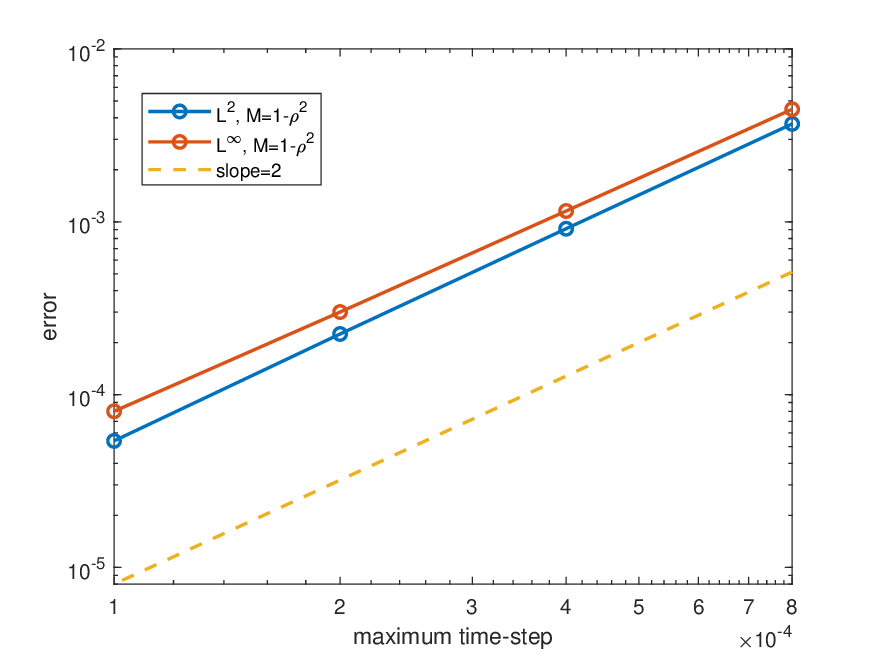}
 	}
	\subfigure[Variable time steps]{
	\includegraphics[width=0.4\textwidth]{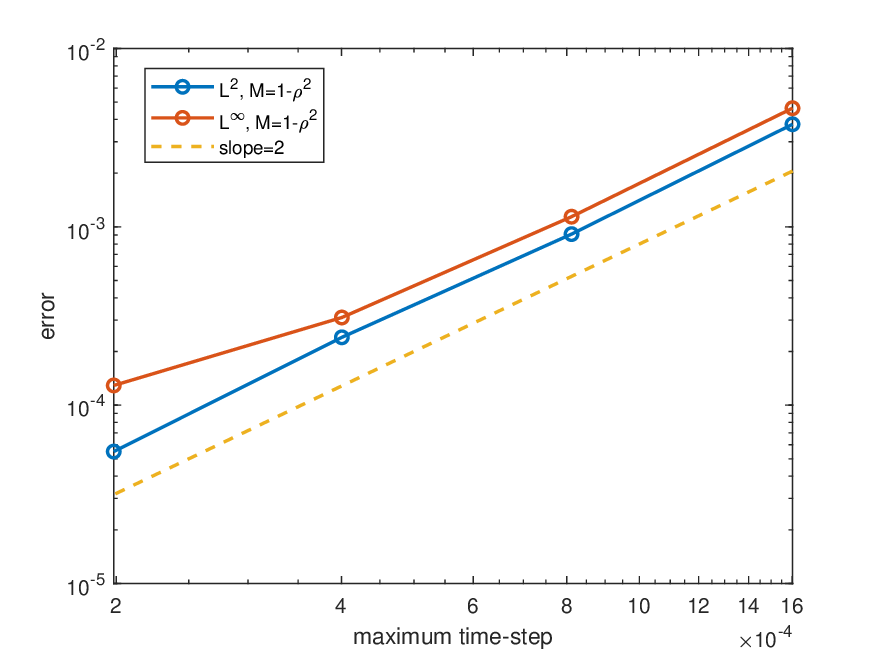}
	}
 	\caption{Convergence order for \eqref{scheme:ac fully discrete} with  $\rho_0(X)=1-X^2$, $X\in[-1,1]$, $\epsilon=0.01$, $\eta=0$, $T=0.5$. }\label{fig:ac order}
\end{figure}

{\bf Interface capture.} 
We first choose the time steps by \( \tau_n = \frac{\sigma_n T}{\sum_{k=1}^{N} \sigma_k} \), with uniformly distributed random values \( \sigma_n \in (0,1) \), \(\forall n \). The numerical results are shown in diagrams (a) to (f) in Figure~\ref{fig:ac2}. The interface width and particle positions are captured, and the discrete energy is also dissipative. Furthermore, we use strategy \eqref{strategy2} to solve the scheme \eqref{scheme:ac fully discrete} with \( \epsilon = 0.01 \) and \( \mathcal{M}(\rho) \equiv 1 \). The energy plot is displayed in in diagrams (g) in Figure~\ref{fig:ac2}. The time steps (blue line) and the corresponding time-step ratios (red line) are plotted in diagrams (h) in Figure~\ref{fig:ac2}. It can be observed that the time step increases as the energy decreases slowly, which improves the efficiency.
 	\begin{figure}[!htb]
 	\centering
 	\subfigure[Interface, $M\equiv 1$]{
 		\includegraphics[width=0.22\textwidth]{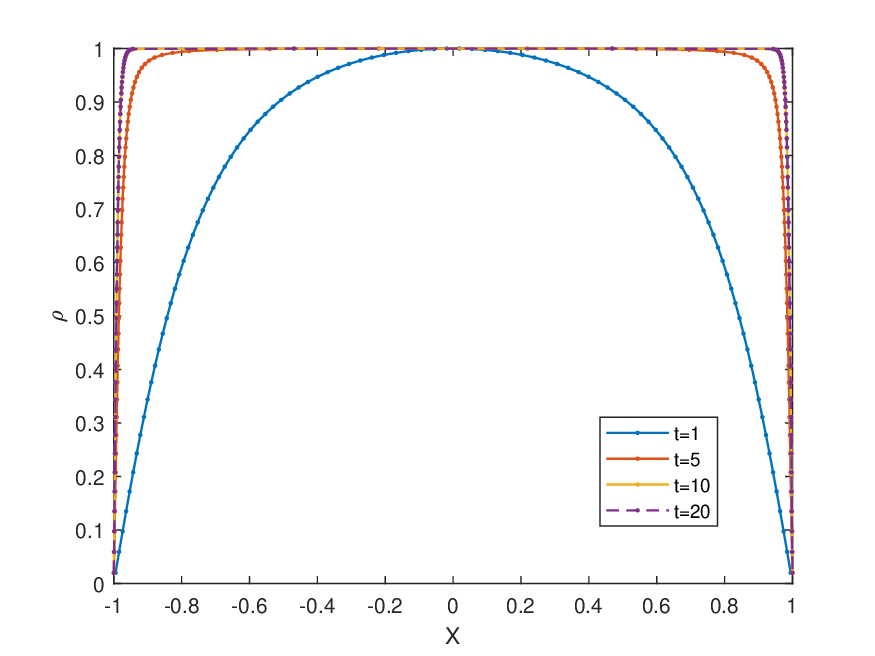}
 	}
 	\subfigure[Interface, $M=1-\rho^2$]{
 		\includegraphics[width=0.22\textwidth]{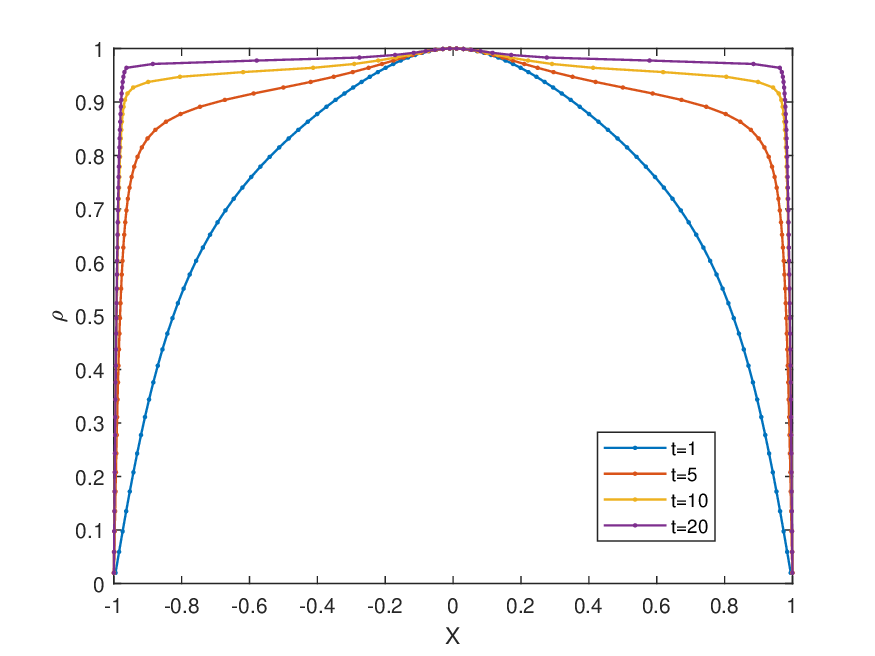}
 	}
 	\subfigure[Position, $M\equiv 1$]{
 		\includegraphics[width=0.22\textwidth]{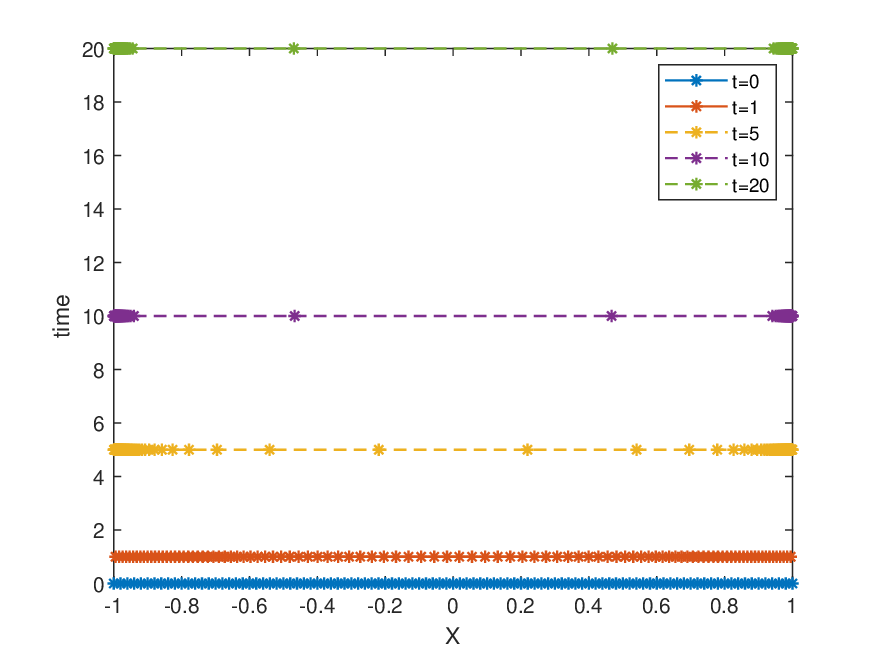}
 	}
 	\subfigure[Position, $M=1-\rho^2$]{
 		\includegraphics[width=0.22\textwidth]{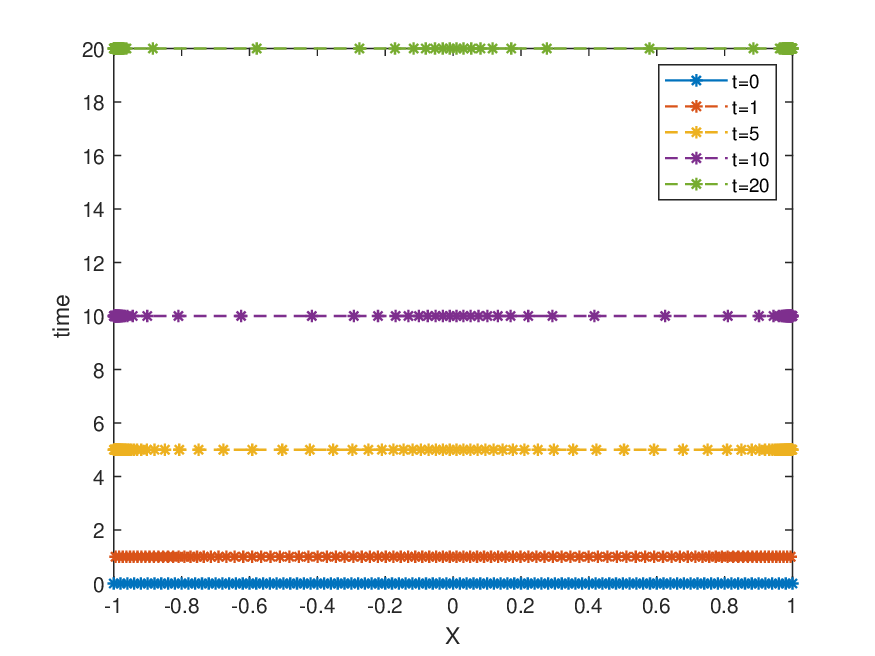}
 	}
  	\subfigure[Interface at $T=20$]{
 	\includegraphics[width=0.22\textwidth]{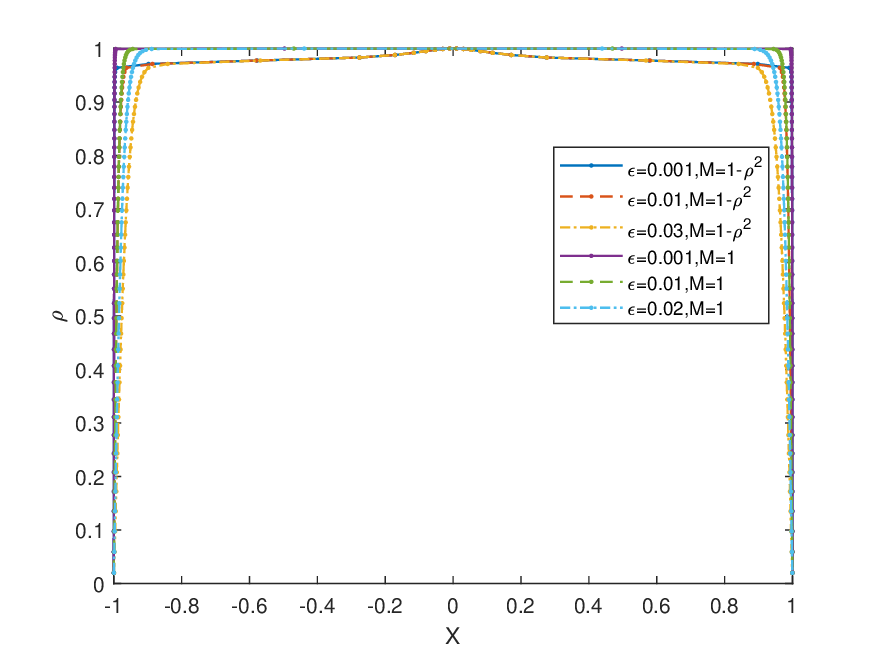}
 }
 	\subfigure[Energy]{
 		\includegraphics[width=0.22\textwidth]{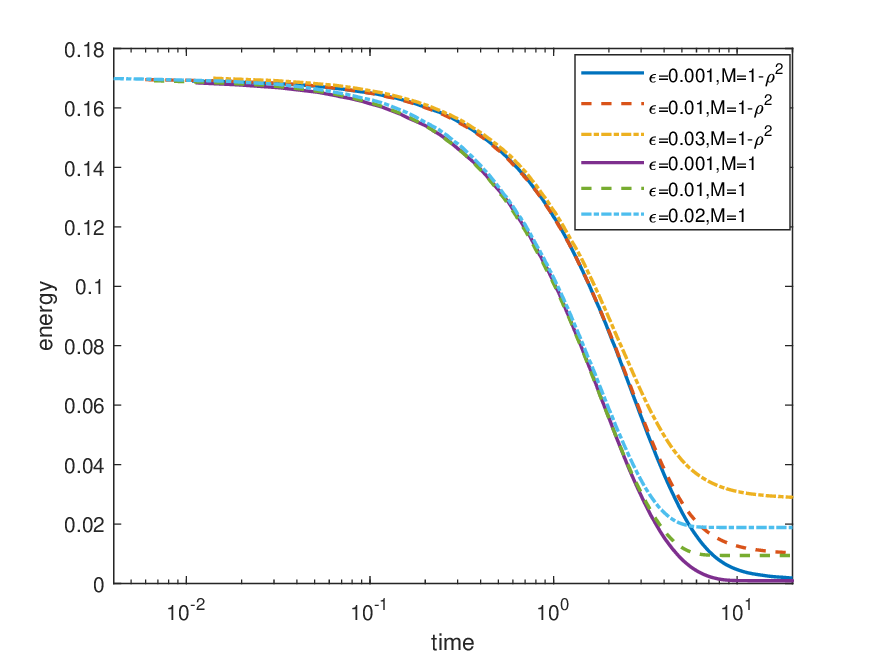}
 	}
  	\subfigure[Energy, strategy \eqref{strategy2}]{
 	\includegraphics[width=0.22\textwidth]{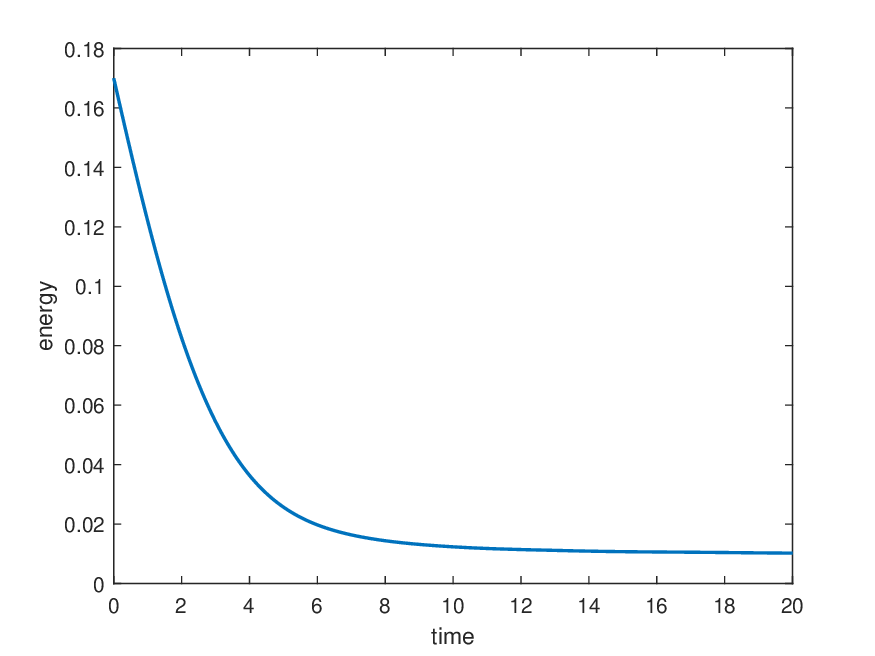}
 }
 \subfigure[Time step with \eqref{strategy2}]{
 	\includegraphics[width=0.22\textwidth]{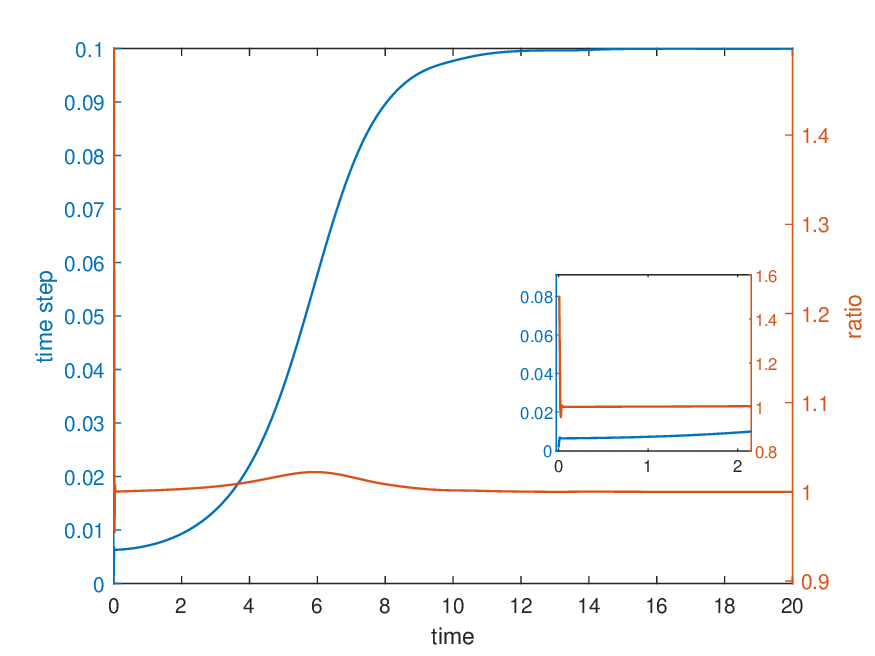}
 }
 	\caption{Numerical solution solved by scheme \eqref{scheme:ac fully discrete} at $T=20$ with $\epsilon=0.01$, $M_x=100$, $N=2000$.  (a-f): $\tau_n=\frac{\sigma_nT}{\sum_{k=1}^{N}\sigma_k}$. (g-h): strategy \eqref{strategy2} with $\beta=1e5$, $r_{\text{user}}=1.5$,  $\tau_{\max}=0.1$ and $\tau_{\min}=1e-3$.}\label{fig:ac2}
 \end{figure}

\subsection{Conservative models in one dimension}
Now, we present numerical experiments for conservative models in one-dimension. 

\subsubsection{Porous medium equation}
The PME is a Wasserstein gradient flow with energy $E(\rho)=\int_{\Omega}\frac{1}{m-1}\rho^m\mathrm{d}x$. 

{\bf Convergence test.} 
Consider the following smooth initial value: 
\begin{align}
	\rho_0(x)=\cos\left(\frac{\pi x}{2}\right),\qquad x\in[-1,1],
\end{align}
with the Dirichlet boundary condition $x|_{\partial\Omega}=X|_{\partial\Omega}$. To calculate the convergence order, we choose the reference solution computed under a much fine mesh with $M_x=10000$, $N=20000$. 

The convergence order of the scheme \eqref{eq:scheme bdf2}-\eqref{schem:1-3} with both fixed and variable time steps  are displayed in Table~\ref{convergence fix} and Table~\ref{convergence fix_variable}, both of which are clearly second order. It can be observed that even when the maximum time-step ratio exceeds the theoretical value \( r_{\max} \), the numerical results remain stable and accurate. This suggests that the limitation on the maximum time-step ratio  in the theoretical analysis is most likely pessimistic.

\begin{table}[!htb]
	\centering
	\caption{Convergence order for the PME with $m=2$ at $T=0.5$ under fixed time-step.}
		\begin{tabular}{cccccccccc}
			\hline
			$M_x$ &$\delta t$&$L_h^{2}$ error ($x$) & order & $L^{\infty}$ error ($x$) & order &\ $L_h^{2}$ error ($u$) & order\\ \hline
			100 &1/200& 8.7715e-05	& & 1.2351e-04& & 3.8165e-05&\\
			200 &1/400& 2.1936e-05&1.9995 &3.0882e-05&1.9998&9.5315e-06 &2.0015\\
			400 &1/800& 5.4852e-06&1.9997& 7.7083e-06 &2.0023&	2.3792e-06& 2.0023\\
			800 &1/1600& 1.3715e-06&1.9998&  1.9271e-06&2.0000&5.9187e-07  &2.0071\\
			\hline
		\end{tabular}\label{convergence fix}
\end{table}
\begin{table}[!htb]
	\centering
	\caption{Convergence order for the PME with $m=2$ at $T=0.5$ under variable time steps.}
		\resizebox{\linewidth}{!}{
	\begin{tabular}{cccccccccc}
		\hline
		$M_x$ &max time-step&max time-ratio &$L_h^{2}$ error ($x$) & order & $L^{\infty}$ error ($x$) & order &\ $L_h^{2}$ error ($u$) & order\\ \hline
		100 &0.0051 & 73.7835&8.8319e-05	& & 1.2351e-04& & 3.7623e-05&\\
		200 &0.0025 & 971.0994&2.2082e-05&1.9179 & 3.0882e-05&1.9179 &9.4009e-06 &1.9188\\
		400 &0.0013 & 369.4026&5.5279e-06&2.0927 &  7.7084e-06 &2.0970&2.3422e-06& 2.0999\\
		800 &6.2751e-04 &4.1802e+03 &1.3816e-06&1.9586&1.9271e-06&1.9582&5.8307e-07  &1.9642\\
		\hline
	\end{tabular}\label{convergence fix_variable}}
\end{table}

{\bf Waiting time.} 
The propagation speed at the boundary for PME can be calculated by
\begin{align}
\partial_tx=-\frac{m}{m-1}\frac{\partial_X(\rho(X,0))^{m-1}}{(\partial_Xx)^{m}},
\end{align}
see \cite{duan2019pme,duan2021structure,liu2023envara}. The numerical waiting time can be calculated as the first instance such that $\partial_tx\neq0$, see the related works \cite{duan2019pme,cheng2024flow}. We set the  initial value to be
\begin{align}\label{initial:wt}
	\rho_0(x)=\left(\frac{m-1}{m}\left((1-\theta)\sin^2(x)+\theta\sin^4(x)\right)\right)^{1/(m-1)},\qquad x\in[-\pi,0],
\end{align}
where $\theta\in[0,0.25]$.  The waiting time for \eqref{initial:wt} is given in \cite{aronson1983initially} by $t_{w,e}:=\frac{1}{2(m+1)(1-\theta)}$. 

 We apply the Algorithm \ref{algo,2} with different time-adaptive strategies, \eqref{strategy1} and \eqref{strategy2}, in the following numerical experiments with $r_{\text{user}}=1.4$, $\tau_{\min}=1e-6$ and $\tau_{\max}=1e-2$ in \eqref{strategy1}, and $r_{\text{user}}=1.4$, $\tau_{\min}=1e-6$ and $\tau_{\max}=5\times1e-3$ in \eqref{strategy2}. We take $\gamma=0.1,\ 1,\ 10,\ 100$ and $\beta=0.1,\ 1,\ 10,\ 100$, and display  the results  in Figure~\ref{fig:adaptive}. It can be noticed that the time-step increases after the waiting time, and the efficiency is improved compared to the scheme with a fixed time-step.
\begin{figure}[!htb]
	\centering
	\subfigure[Time-step with \eqref{strategy1}]{
		\includegraphics[width=0.22\textwidth]{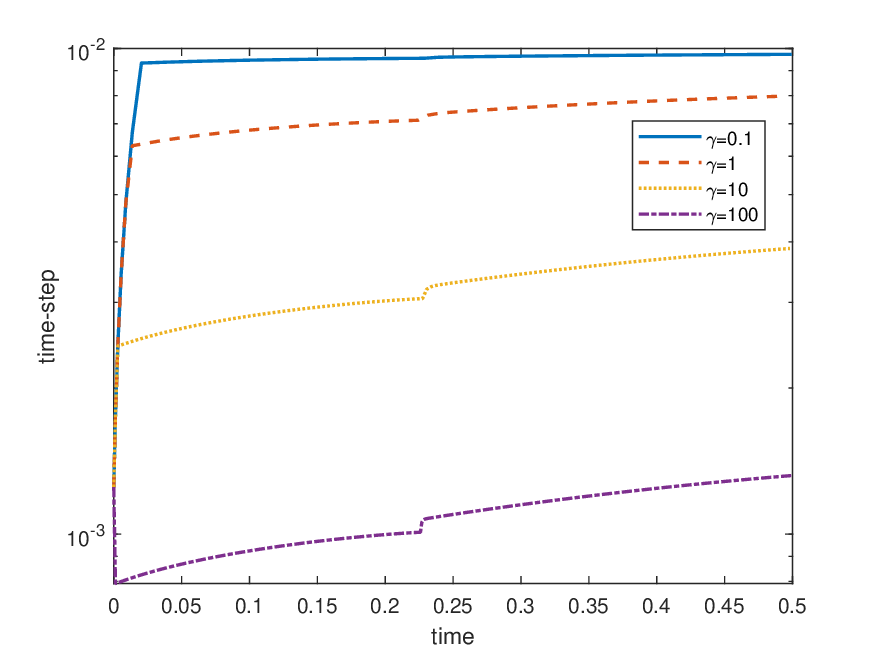}
	}
	\subfigure[Time-step ratio, \eqref{strategy1}]{
		\includegraphics[width=0.22\textwidth]{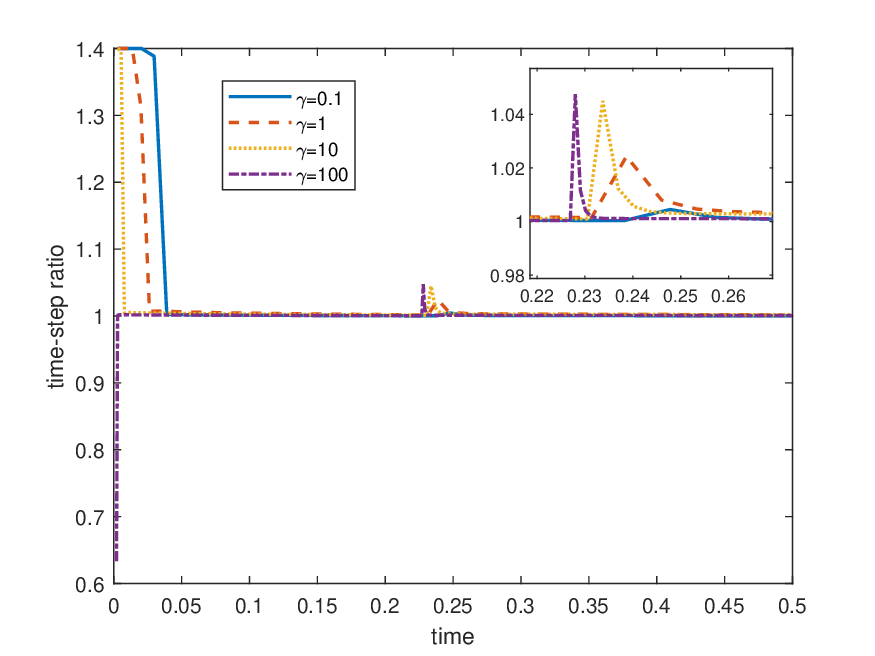}
	}
	\subfigure[Time-step  with \eqref{strategy2}]{
	\includegraphics[width=0.22\textwidth]{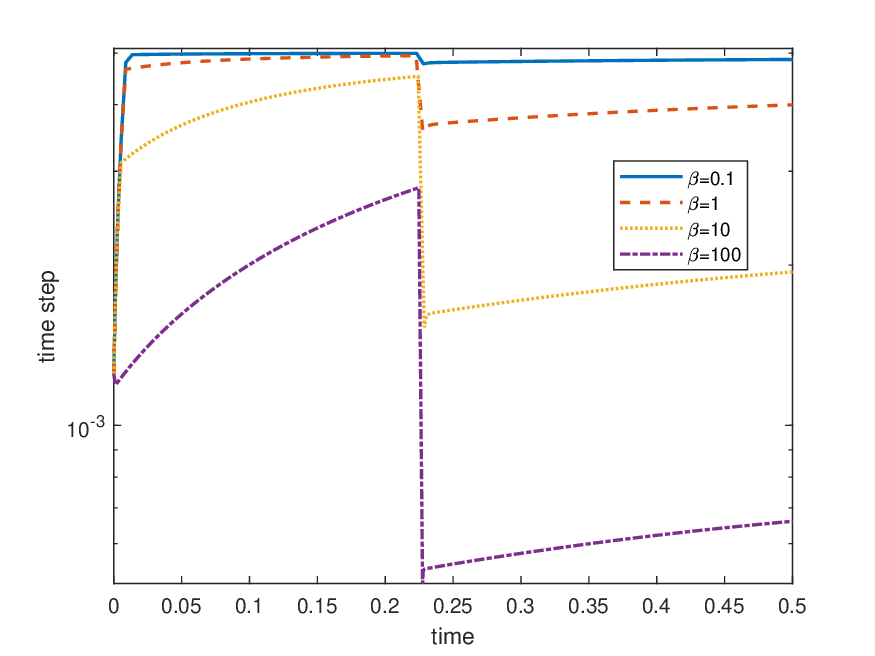}
}
\subfigure[Time-step ratio,  \eqref{strategy2}]{
	\includegraphics[width=0.22\textwidth]{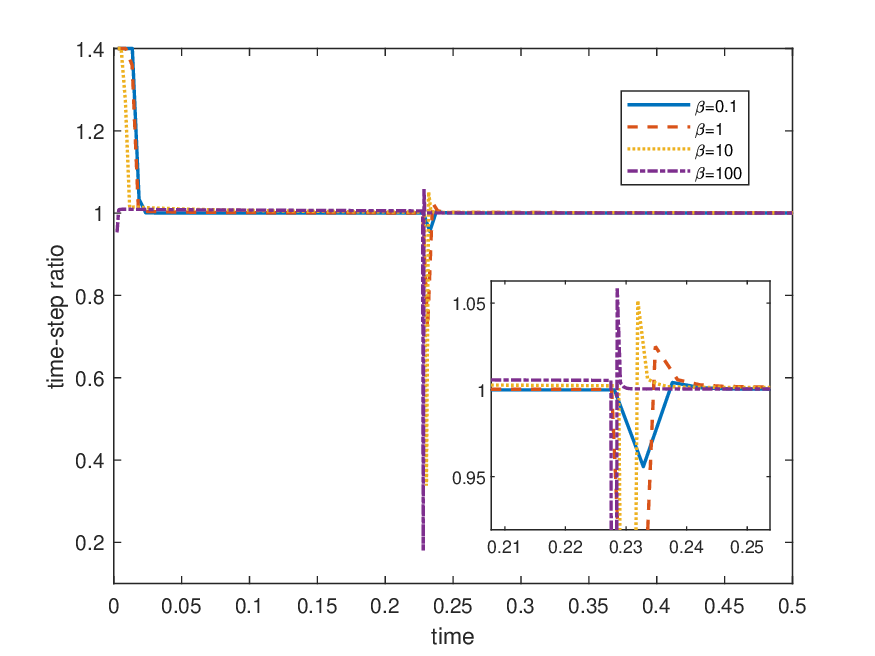}
}
	\caption{Numerical results solved by scheme \eqref{eq:scheme bdf2}-\eqref{schem:1-3} using the Algorithm \ref{algo,2}. Initial value \eqref{initial:wt} with $m=2$, $\theta=0.25$, $M_x=800$.}\label{fig:adaptive}
\end{figure}

Next, we use a fixed time-step to carry out numerical experiments with \eqref{initial:wt} and $\theta=0.25$, the results are displayed in Figure~\ref{fig:waitingtime}. The strategy \eqref{strategy1} is applied with $\gamma=10$, $\tau_{\max}=5\times1e-3$.  In the case where $m=2$, the free boundaries remain static during $0<t\le0.22$, and they begin to move at a finite speed after $t=0.22$. The waiting time is approximately $t=0.19$ when $m=2.5$. 
\begin{figure}[!htb]
	\centering
	\subfigure[$m=2$]{
		\includegraphics[width=0.31\textwidth]{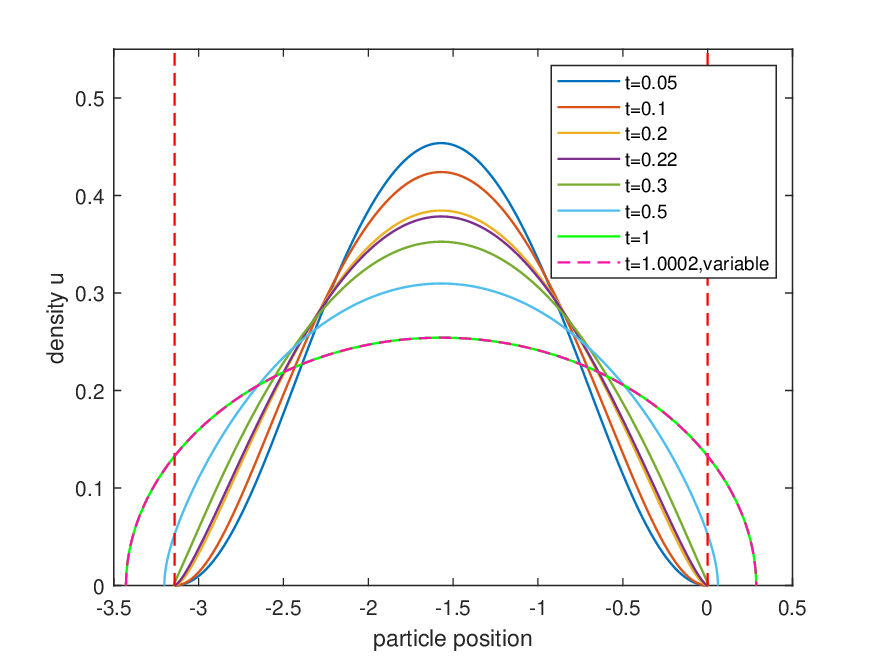}
	}
	\subfigure[$m=2.5$]{
	\includegraphics[width=0.31\textwidth]{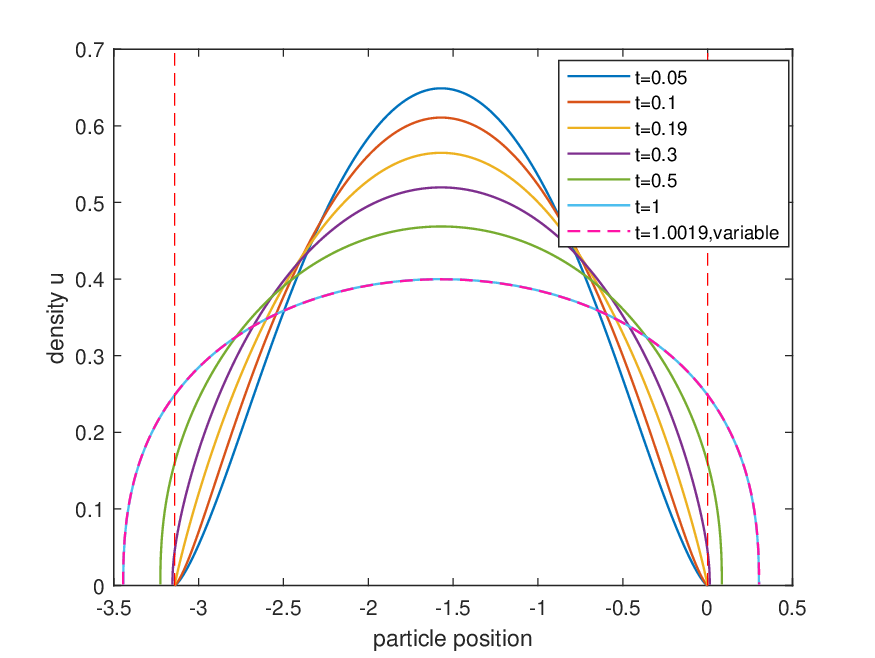}
}
\subfigure[energy]{
	\includegraphics[width=0.31\textwidth]{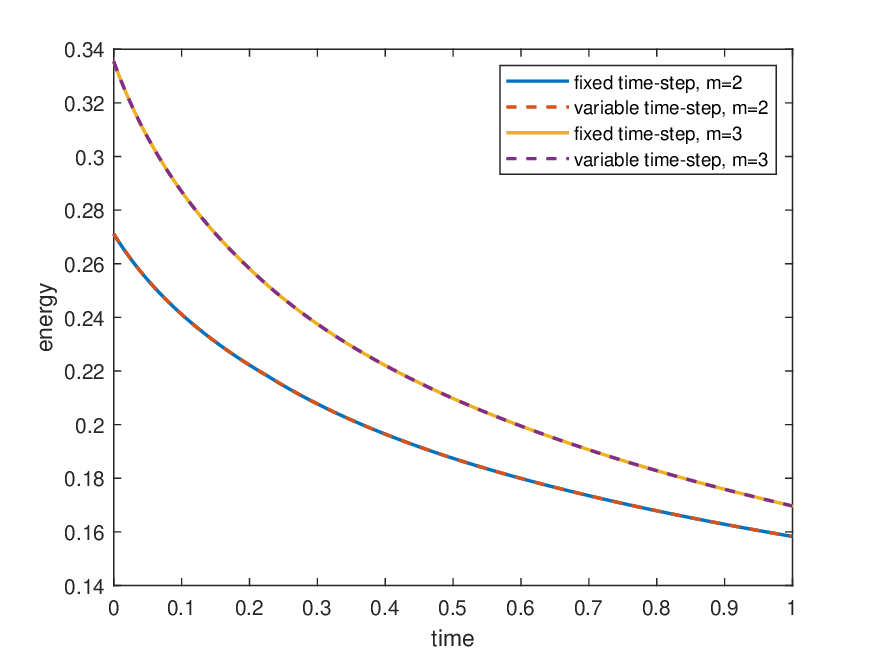}
}
	\caption{Numerical results for PME solved by \eqref{eq:scheme bdf2}-\eqref{schem:1-3} with \eqref{initial:wt}, $\theta=0.25$, $M_x=800$, $\delta t=\frac{1}{800}$.}\label{fig:waitingtime}
\end{figure}
	\subsubsection{Keller-Segel model}
	We simulate the Keller-Segel model with $E(u)=\int_{\Omega} u\ln u\mathrm{d}x+\frac{1}{2\pi}\int_{\Omega\times\Omega}\ln|x-y| u(x)u(y)\mathrm{d}x\mathrm{d}y$, and choose the following initial value:
	\begin{align}
	u_0(x)=\frac{C}{\sqrt{2\pi}}\exp^{-\frac{x^2}{2}}+10^{-8},\quad x\in[-15,15],
	\end{align}
	with $C=5\pi$. 
	We take $M_x=800$ to compute numerical solution using scheme \eqref{scheme:ac fully discrete} with time-step strategy \eqref{strategy2}, numerical results are shown in Figure~\ref{fig:ks}. 
	It can be observed that the proposed scheme is both energy stable and mass conservative. The time steps (blue line) and the corresponding time-step ratios (red line) are plotted in diagram (c) in Figure~\ref{fig:ks}. Specifically, the numerical solution exhibits a blow-up in finite time when the initial mass is large, reflecting the blow-up phenomenon.
	\begin{figure}[!htb]
	\centering
	\subfigure[Numerical solution]{
	\includegraphics[width=0.3\textwidth]{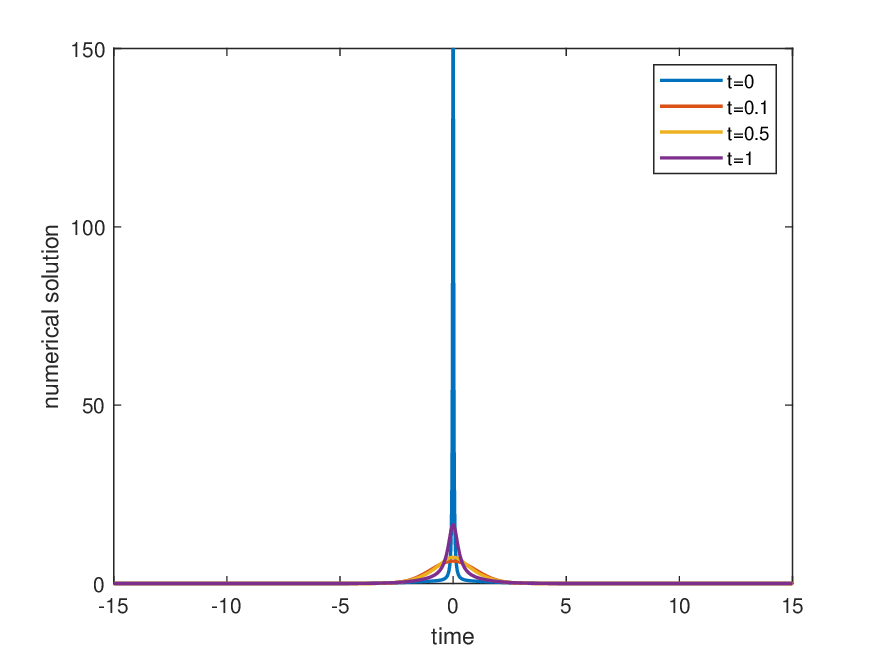}
	}
	\subfigure[Mass and energy]{
	\includegraphics[width=0.3\textwidth]{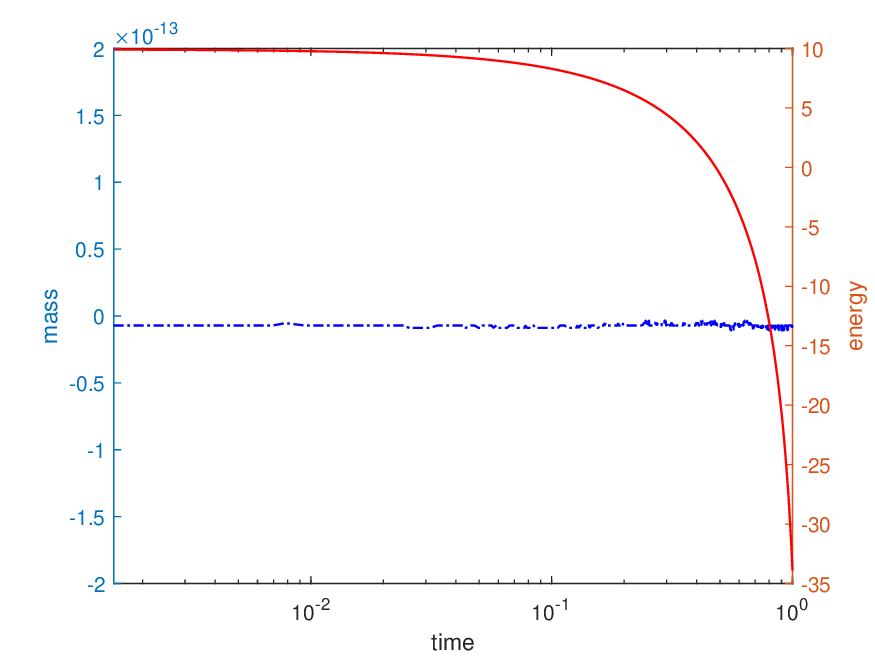}
	}
		\subfigure[Time step]{
		\includegraphics[width=0.3\textwidth]{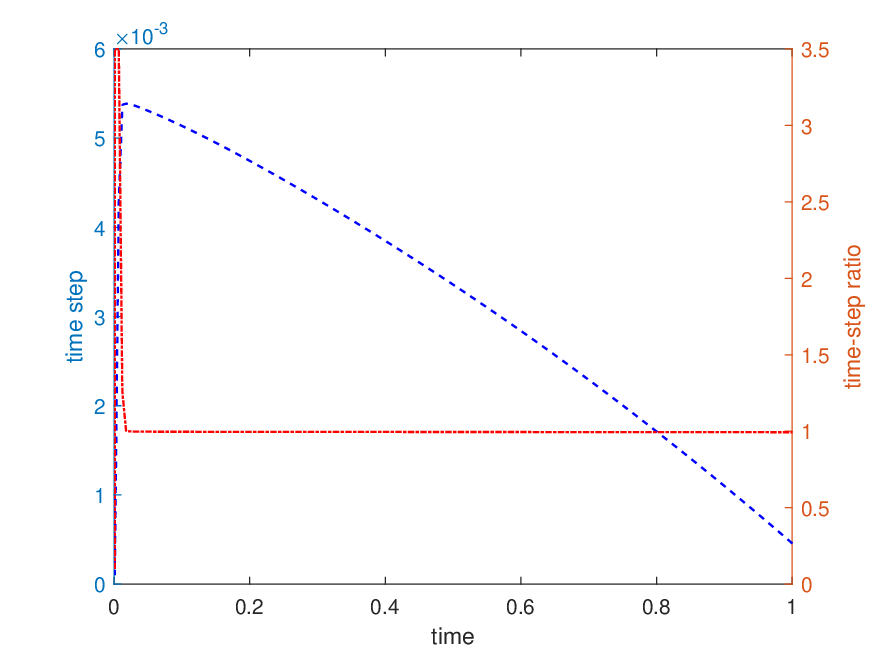}
	}
	\caption{Numerical results for Keller-Segel equation solved by  \eqref{eq:scheme bdf2}-\eqref{schem:1-3} using strategy \eqref{strategy2}, $M_x=800$,  $\beta=1e-2$, $\tau_{\min}=1e-4$, $\tau_{\max}=1e-2$, $r_{\text{user}}=3.5$.}\label{fig:ks}
	\end{figure}
	
\subsection{ Conservative models in two dimension}\label{sec:num in 2d}

Below we shall use the explicit scheme \eqref{scheme:2d explicit1}-\eqref{scheme:2d explicit2}  to simulate various  conservative models.
%

\subsubsection{Porous medium equation}

\textbf{Barenblatt Solution.} We consider the initial value \( u(x, y, 0) \) given by the Barenblatt solution:
\begin{align}\label{barenblatt}
	u(x,y,t) = \max\left( 0.1 - \frac{\kappa(m-1)}{4m} \frac{x^2 + y^2}{(t+1)^{\kappa}}, 0 \right)^{\frac{1}{m-1}},
\end{align}
where \( \kappa = \frac{1}{m} \). 
For the numerical simulations, we set \( M_x = M_y = 64 \). The simulations are performed using the scheme \eqref{scheme:2d explicit1}-\eqref{scheme:2d explicit2}, with both fixed time steps and variable time steps. The regularization term \( \varepsilon \tau_{n+1}^2 \Delta_{\bm{X}} {\bm{x}}^{n+1} \) with \( \varepsilon = 0.5 \) is used. For the variable time-step case, we apply the strategy \eqref{strategy2} with \( \tau_{\min} = 1e-4 \), \( \tau_{\max} = 1e-2 \), \( r_{\text{user}} = 1.25 \), and \( \beta = 1e-2 \). 

We plot  in Figure~\ref{fig:pme2d} and Figure~\ref{fig:pme2d,m5}  the numerical solutions  for $m=2$ at \( T = 2 \) and $m=5$ at \( T = 4 \).  
The time steps (blue line) and the corresponding time-step ratios (red line) are plotted in diagram (f) in Figure~\ref{fig:pme2d}. 
It can be observed that the scheme \eqref{scheme:2d explicit1}-\eqref{scheme:2d explicit2} effectively captures the trajectory movements and free boundaries.

\begin{figure}[!htb]
	\centering
	\subfigure[$t=2$, $\delta t=0.01$]{
		\includegraphics[width=0.31\textwidth]{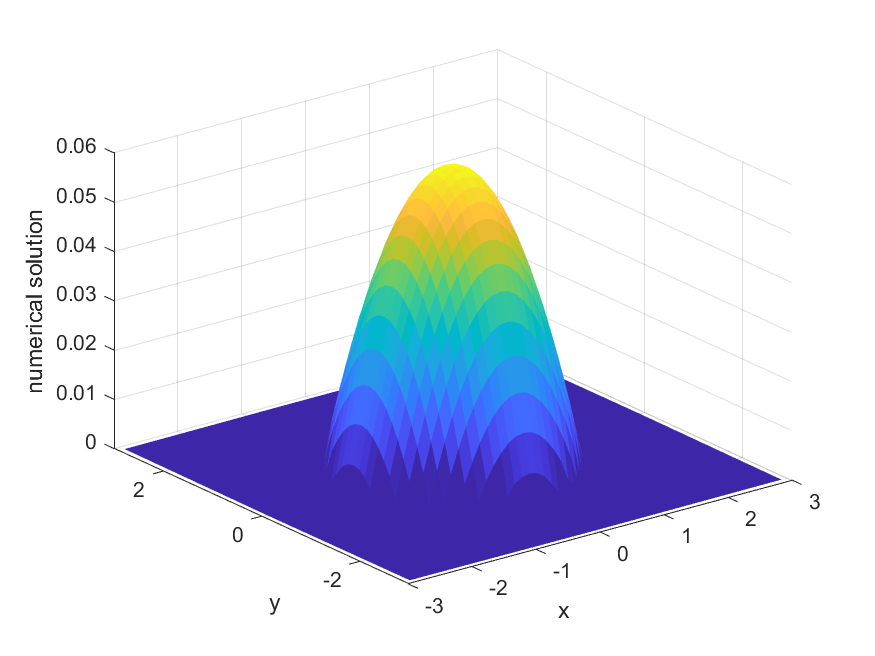}
	}
	\subfigure[ $t=2$, variable steps]{
		\includegraphics[width=0.31\textwidth]{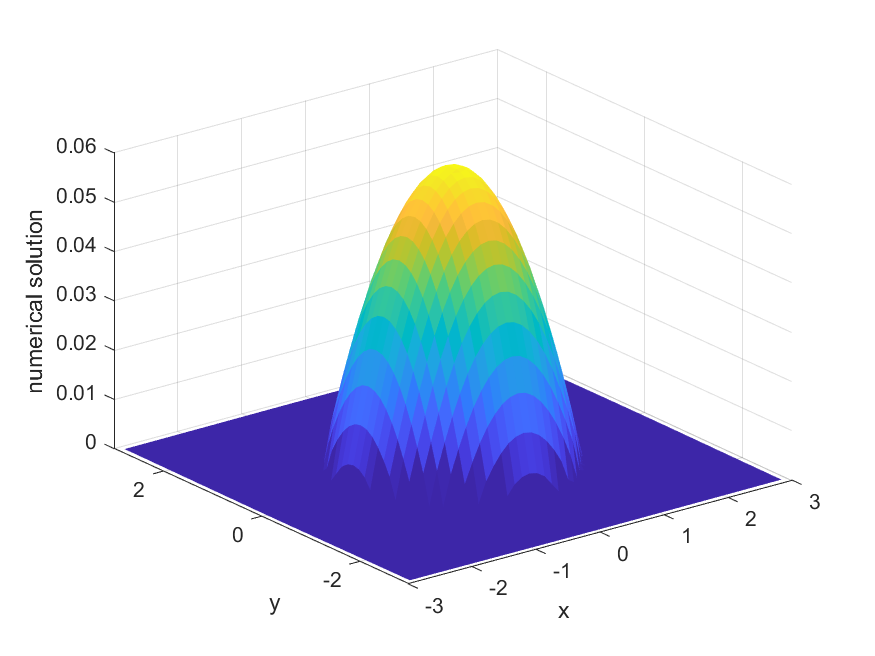}
	}
	\subfigure[Exact solution at $t=2$]{
		\includegraphics[width=0.31\textwidth]{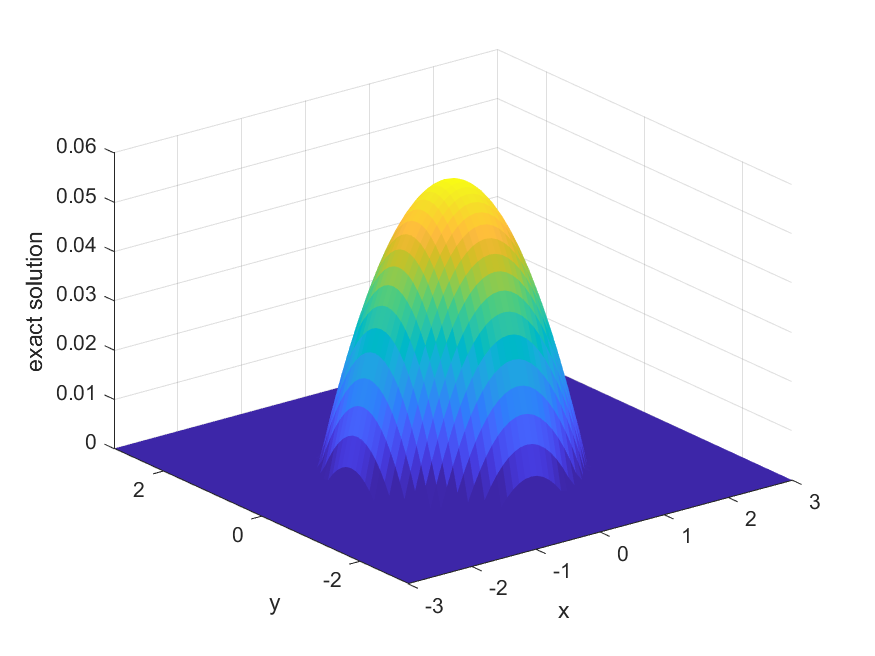}
	}
	\subfigure[$t=2$, $\delta t=0.01$]{
		\includegraphics[width=0.31\textwidth]{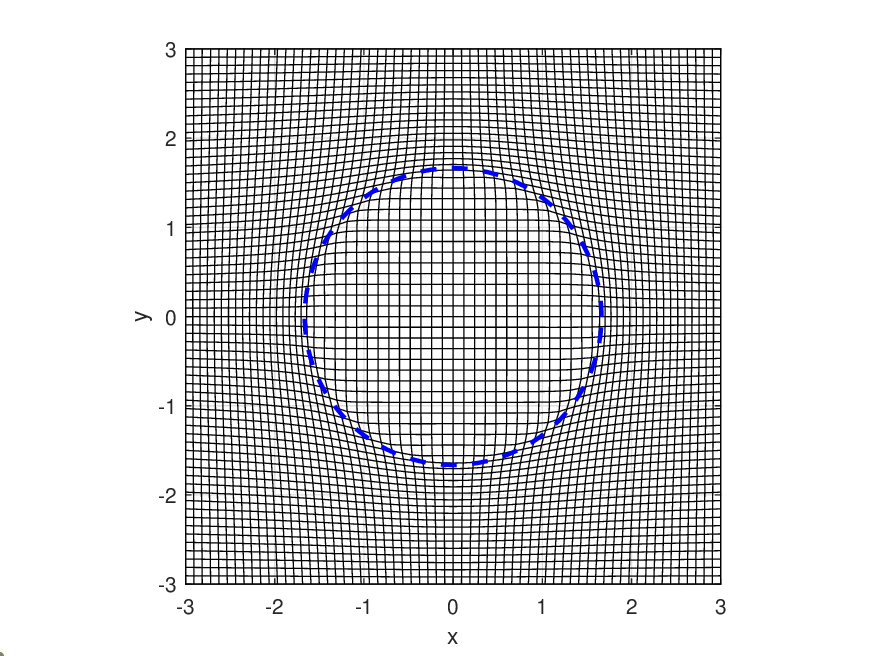}
	}
	\subfigure[$t=2$, variable steps]{
		\includegraphics[width=0.31\textwidth]{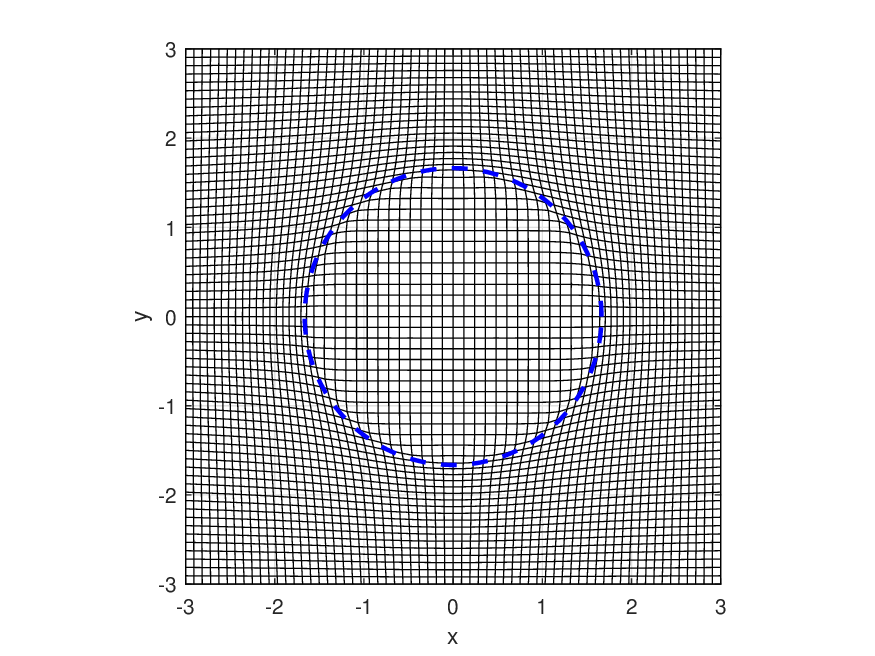}
	}
	\subfigure[Time step]{
		\includegraphics[width=0.31\textwidth]{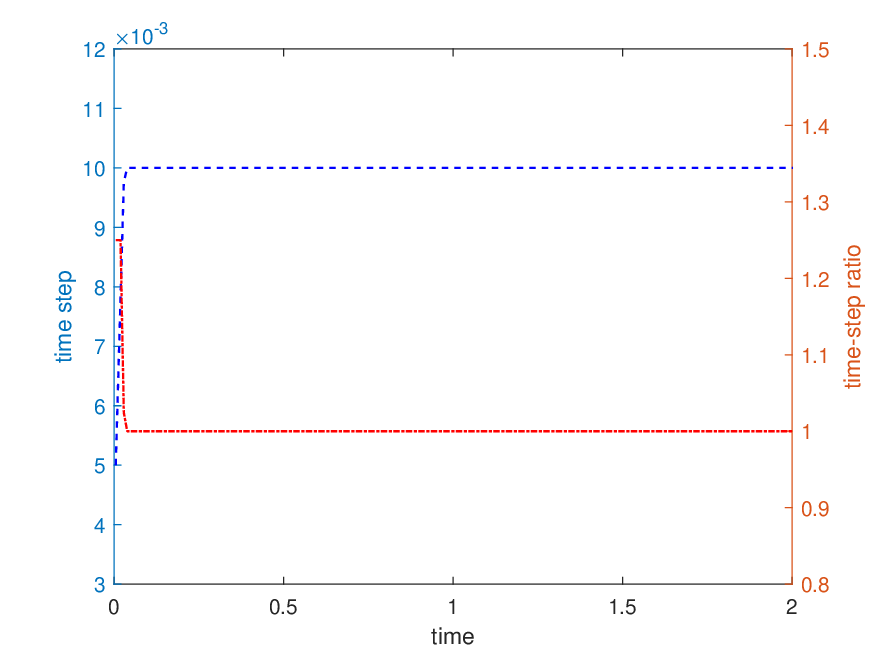}
	}
	\caption{Numerical solution for PME solved by \eqref{scheme:2d explicit1}-\eqref{scheme:2d explicit2} with \eqref{barenblatt} using strategy \eqref{strategy2}, $m=2$, $M_x=M_y=64$, $T=2$, $\varepsilon=0.5$, $\tau_{\min}=1e-4$, $\tau_{\max}=1e-2$, $r_{\text{user}}=1.25$,	$\beta=1e-2$. The blue dashed circle is the exact interface for \eqref{barenblatt} at $T=2$. }\label{fig:pme2d}
\end{figure}

\begin{figure}[!htb]
	\centering
	\subfigure[$t=4$]{
		\includegraphics[width=0.31\textwidth]{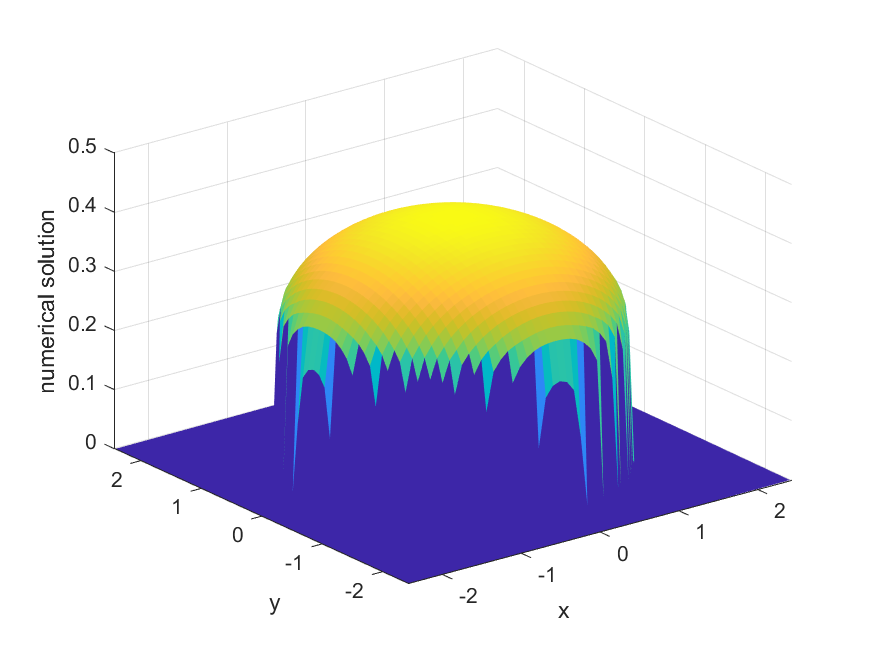}
	}
	\subfigure[Trajectory at $t=4$]{
		\includegraphics[width=0.31\textwidth]{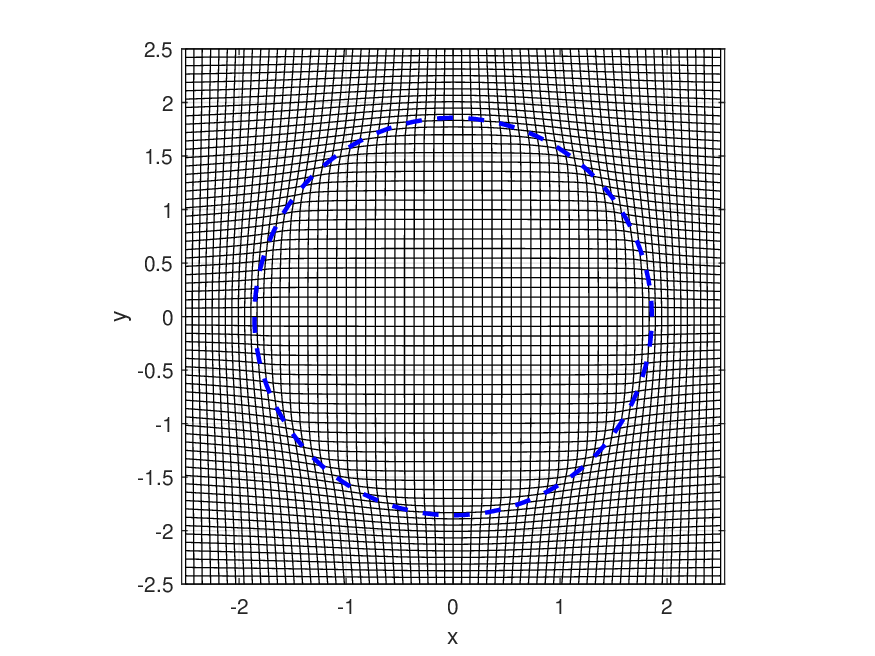}
	}
	\subfigure[ Energy plot]{
		\includegraphics[width=0.31\textwidth]{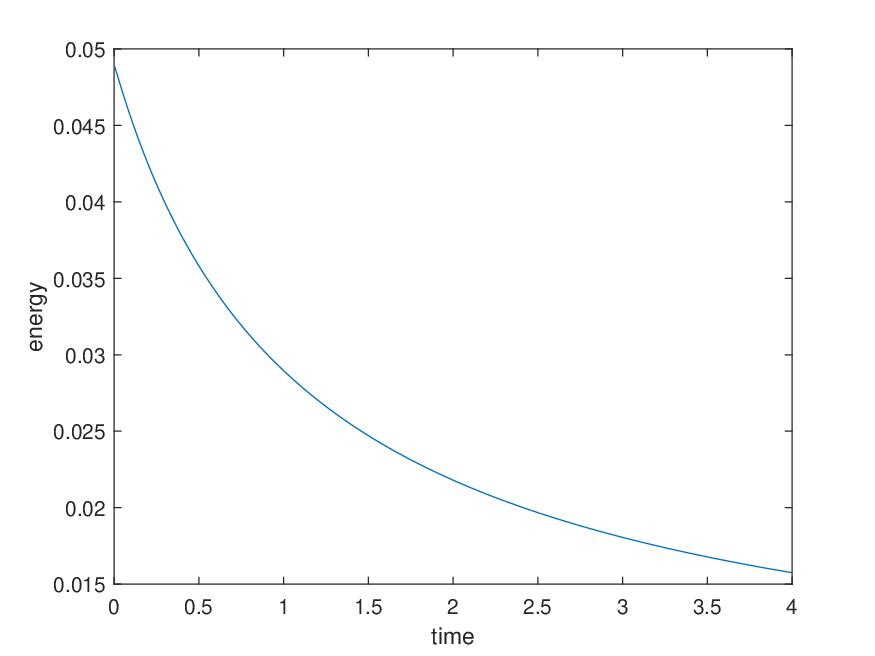}
	}
	\caption{Numerical solution for PME solved by \eqref{scheme:2d explicit1}-\eqref{scheme:2d explicit2} 
	with \eqref{barenblatt} using strategy \eqref{strategy2}, $m=5$, $M_x=M_y=64$, $T=4$, 
	$\varepsilon=40$, $\tau_{\min}=1e-4$, $\tau_{\max}=1e-2$, $r_{\text{user}}=1.25$,	
	$\beta=1e-2$. The blue dashed circle is the exact interface for \eqref{barenblatt} at $T=4$.}\label{fig:pme2d,m5}
\end{figure}

{\bf Non-radial case.} We now consider the following non-radial initial value:
 	\begin{equation}\label{ini:nonradial}
	\rho_0(x,y)=\begin{cases}
		25(0.25^2-(\sqrt{x^2+y^2}-0.75)^2)^{\frac{3}{2}}, &\sqrt{x^2+y^2}\in[0.5,1]\ \text{and}\ (x<0\ \text{or}\ y<0),\\
		25(0.25^2-x^2-(y-0.75)^2)^{\frac{3}{2}}, &x^2+(y-0.75)^2\le 0.25^2\ \text{and}\ x\ge 0,\\
		25(0.25^2-(x-0.75)^2-y^2)^{\frac{3}{2}}, &(x-0.75)^2+y^2\le 0.25^2\ \text{and}\ y\ge 0,\\
		0,\ &\text{otherwise},
	\end{cases}
\end{equation}
which has a partial donut-shaped support \cite{liu2020lagrangian}. We conduct numerical experiments using the scheme \eqref{scheme:2d explicit1}-\eqref{scheme:2d explicit2} with the fixed time step and the regularization term $\varepsilon\tau_{n+1}^2\Delta_{\bm X}{\bm x}^{n+1}$ where $\varepsilon=100$. The numerical results are shown in Figure~\ref{fig:pme2d,nonradial}, which displays the evolution of solution and trajectory movements. It can be observed that the proposed method performs well for solutions with complex support.
\begin{figure}[!htb]
	\centering
	\subfigure[$t=0.1$]{
		\includegraphics[width=0.31\textwidth]{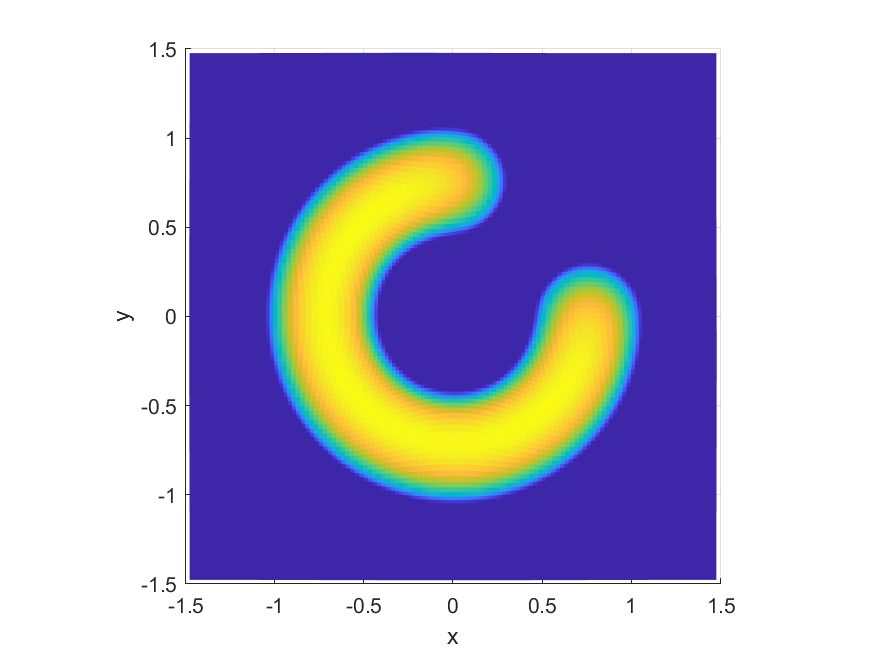}
	}
		\subfigure[$t=0.3$]{
		\includegraphics[width=0.31\textwidth]{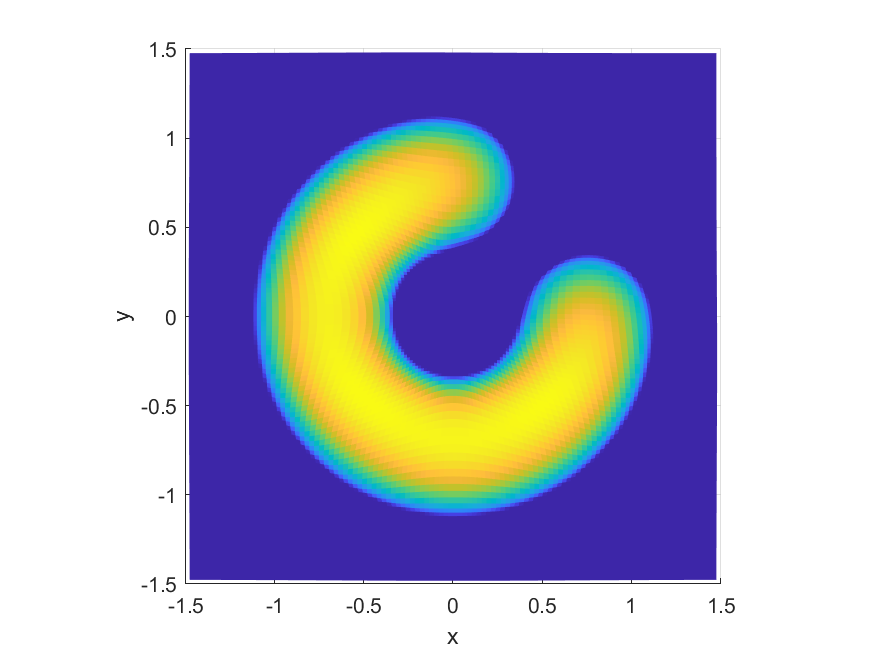}
	}
		\subfigure[$t=0.6$]{
		\includegraphics[width=0.31\textwidth]{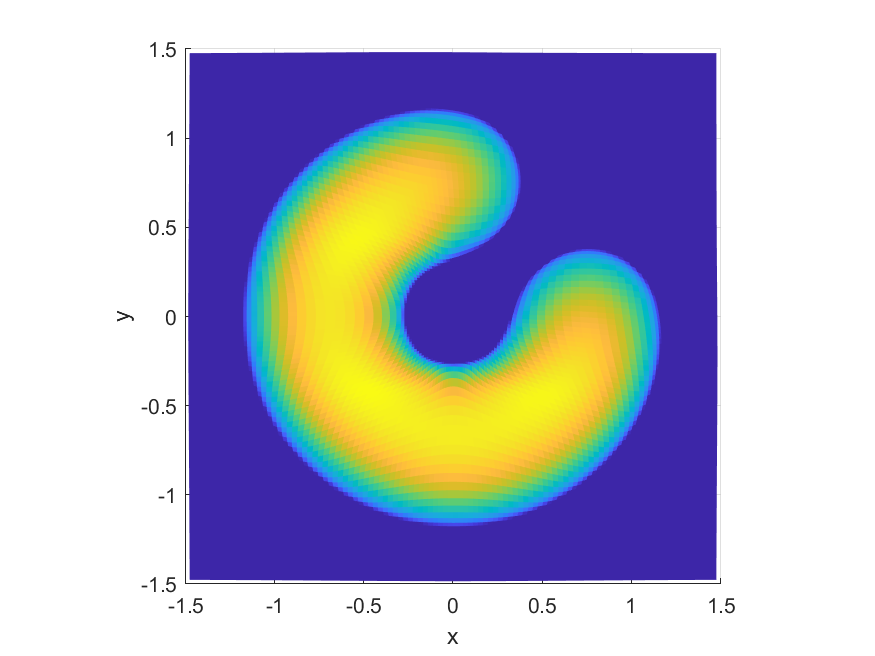}
	}
	\subfigure[ Trajectory at $t=0.1$]{
		\includegraphics[width=0.31\textwidth]{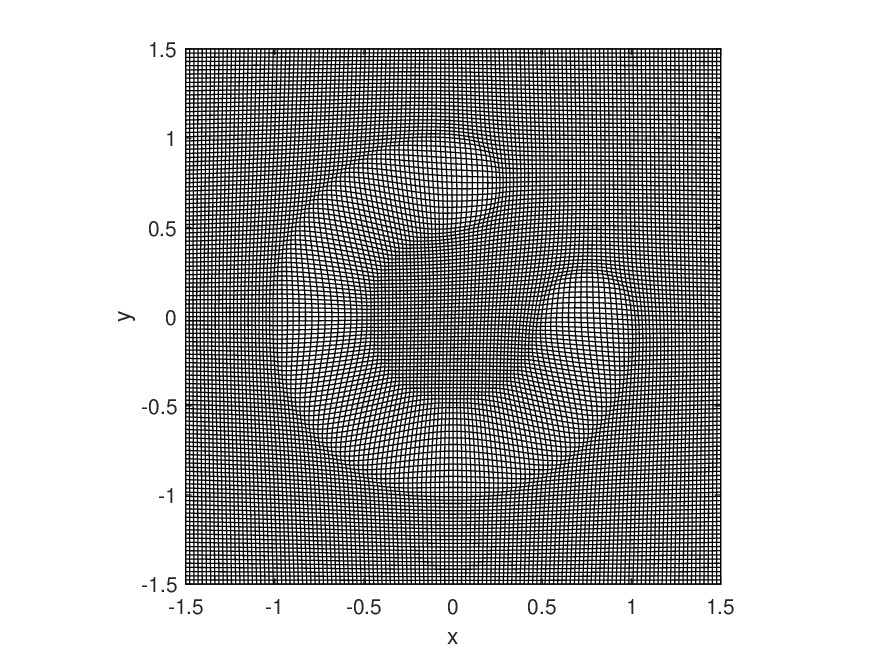}
	}
\subfigure[ Trajectory at $t=0.3$]{
	\includegraphics[width=0.31\textwidth]{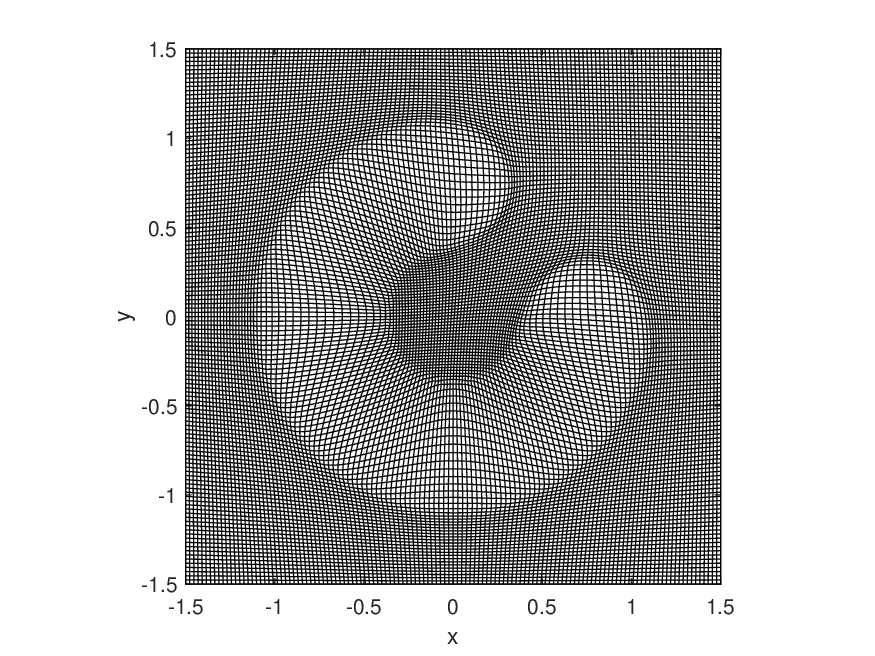}
}
\subfigure[ Trajectory at $t=0.6$]{
	\includegraphics[width=0.31\textwidth]{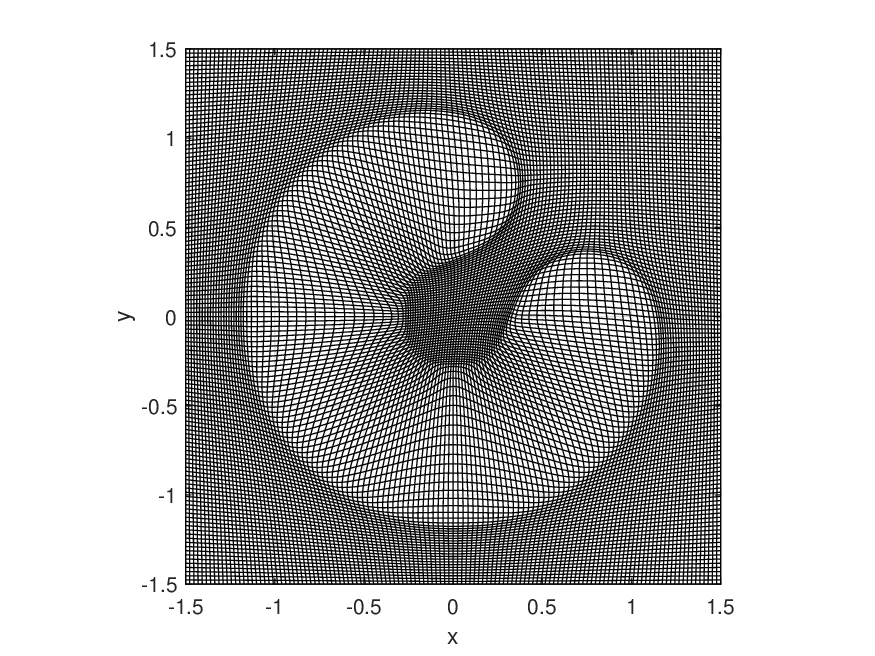}
}
	\caption{Numerical results for PME solved by \eqref{scheme:2d explicit1}-\eqref{scheme:2d explicit2} with fixed time step. Initial value \eqref{ini:nonradial}, $m=3$, $M_x=M_y=128$, $\tau=1e-3$, $\varepsilon=100$.}\label{fig:pme2d,nonradial}
\end{figure}
\subsubsection{Keller-Segel model}
Consider the Keller-Segel model given by
\begin{align*}
	\partial_t \rho = \nabla \cdot (\rho \nabla W * \rho) + \nu \Delta \rho^m, \quad m \ge 1,
\end{align*}
where \( W(x) = \frac{1}{2\pi} \ln |x| \) and \( \nu = 1 \). We choose \( m = 1 \) and \( m = 2 \) and use  the scheme  \eqref{scheme:2d explicit1}-\eqref{scheme:2d explicit2} with the regularization term \( \varepsilon \tau_{n+1}^2 \Delta_{\bm{X}} {\bm{x}}^{n+1} \) and adaptive time step strategy in \eqref{strategy2}. 

The following initial conditions, with \( C = 1 \) or \( C = 5\pi \), are considered:
\begin{align}\label{initial value}
	u_0(x,y) = C \exp^{-x^2 - y^2}, \quad (x,y) \in [-5,5] \times [-5,5].
\end{align}

The numerical results are presented in Figures \ref{fig:ks2d,C1,0.2} and \ref{fig:ks2d,C5pi,0.15}.  
The time steps (blue line) and the corresponding time-step ratios (red line) are plotted in diagram (c) and diagram (f) in Figures \ref{fig:ks2d,C1,0.2} and \ref{fig:ks2d,C5pi,0.15}. 
We observe in particular that the  trajectory movements are consistent with the interface positions.  
The results also  indicate that, when \( m = 1 \), a blow-up phenomenon occurs if the initial mass is large, while diffusion prevails when the initial mass is small. 
In contrast, for \( m = 2 \), the solution eventually converges to a bump with both initial conditions.


	\begin{figure}[!htb]
	\centering
	\subfigure[Solution, $t=0.2$, $m=1$]{
		\includegraphics[width=0.31\textwidth]{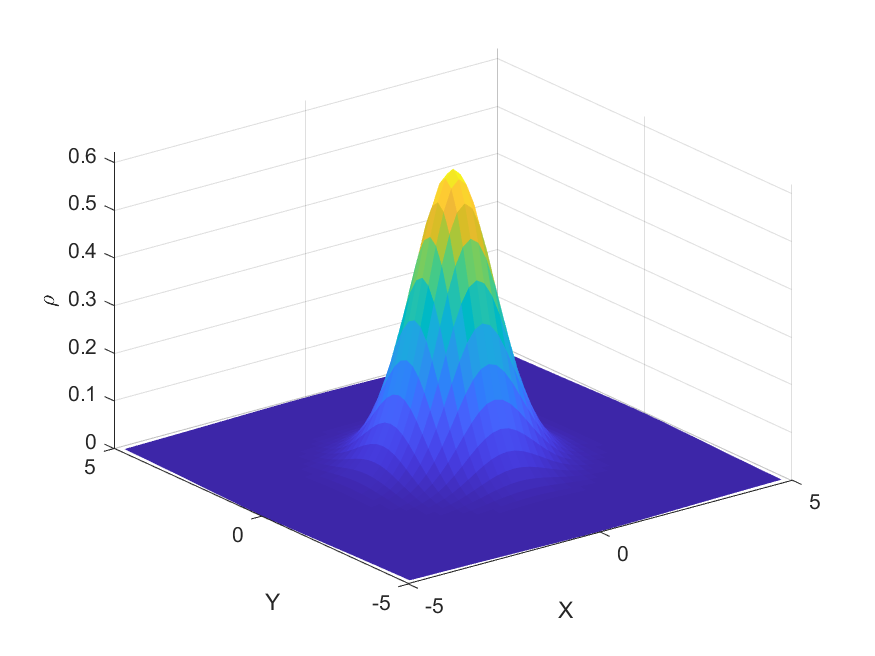}
}
\subfigure[Trajectory, $t=0.2$, $m=1$]{
	\includegraphics[width=0.31\textwidth]{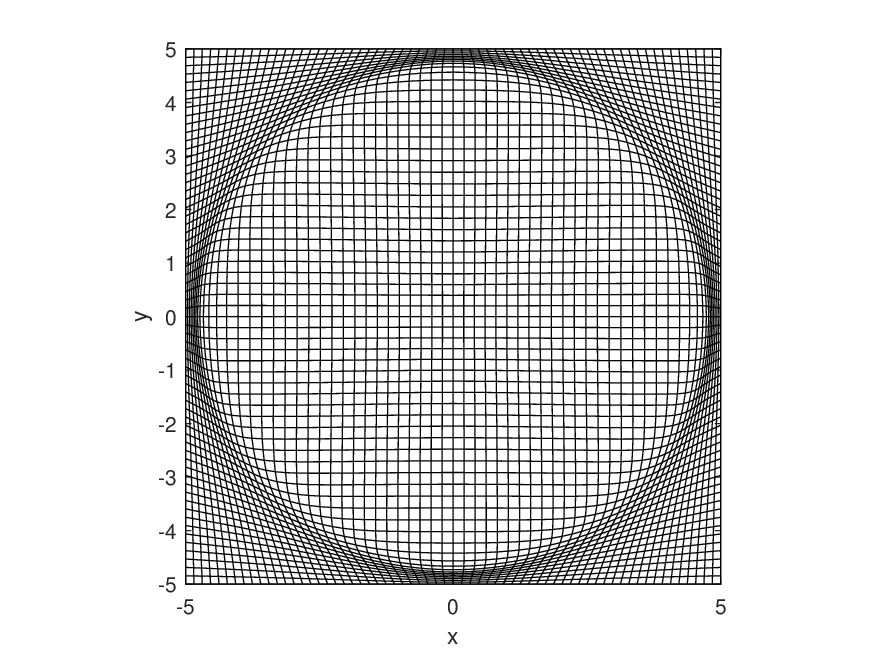}
	}
	\subfigure[Time step, $m=1$]{
\includegraphics[width=0.31\textwidth]{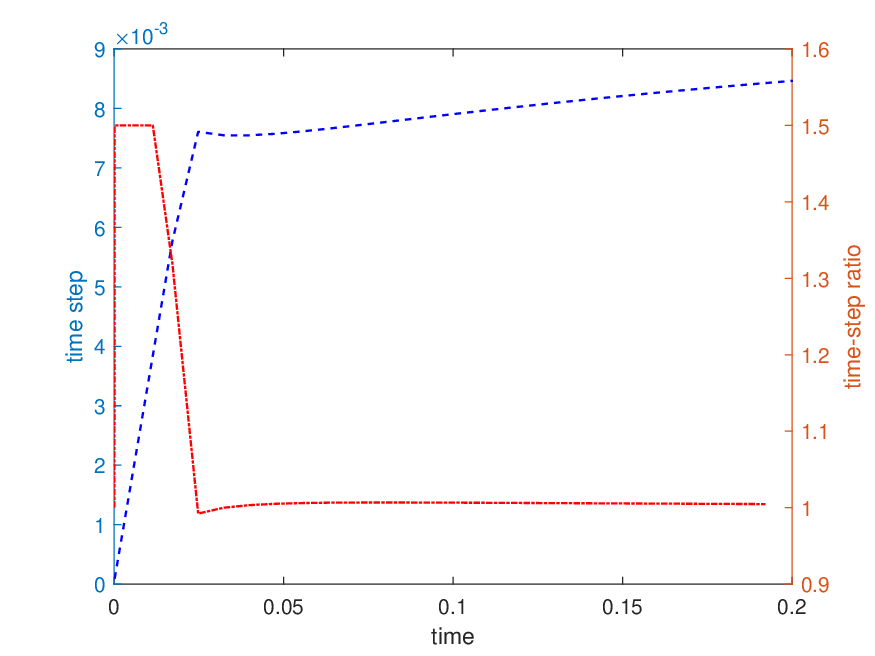}
}
\subfigure[Solution, $t=1.5$, $m=2$]{
\includegraphics[width=0.31\textwidth]{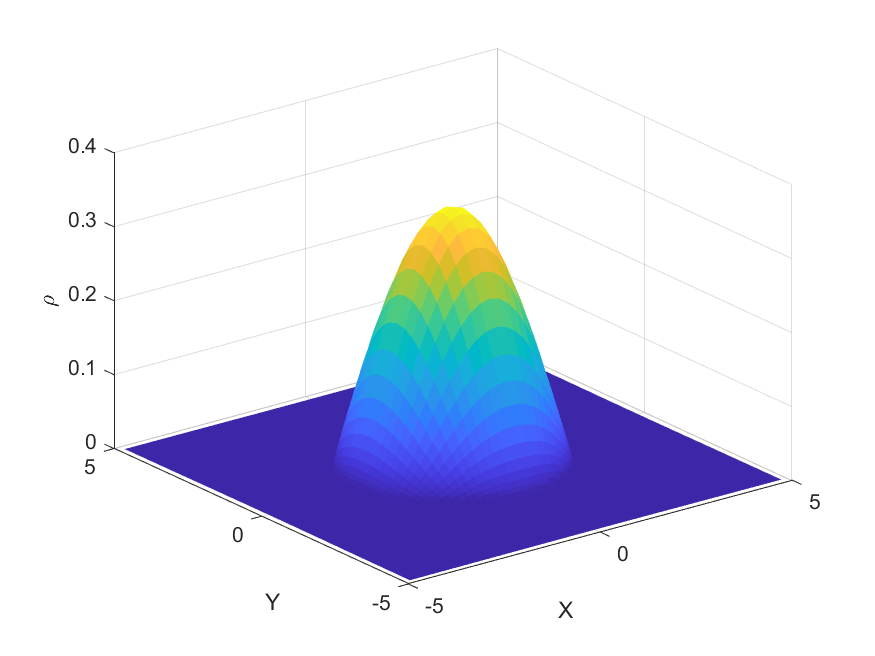}
}
\subfigure[Trajectory, $t=1.5$, $m=2$]{
\includegraphics[width=0.31\textwidth]{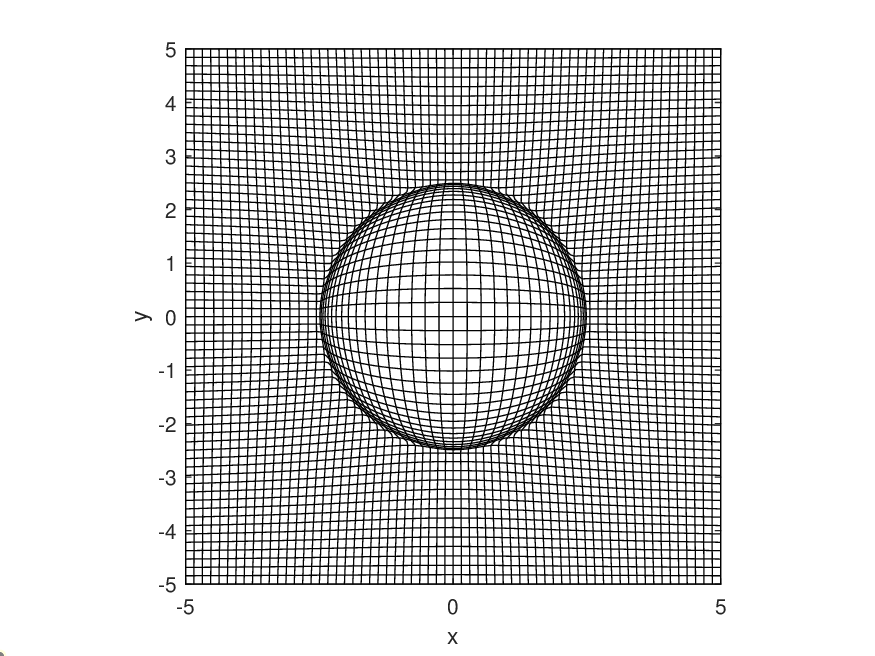}
}
\subfigure[Time step, $m=2$]{
\includegraphics[width=0.31\textwidth]{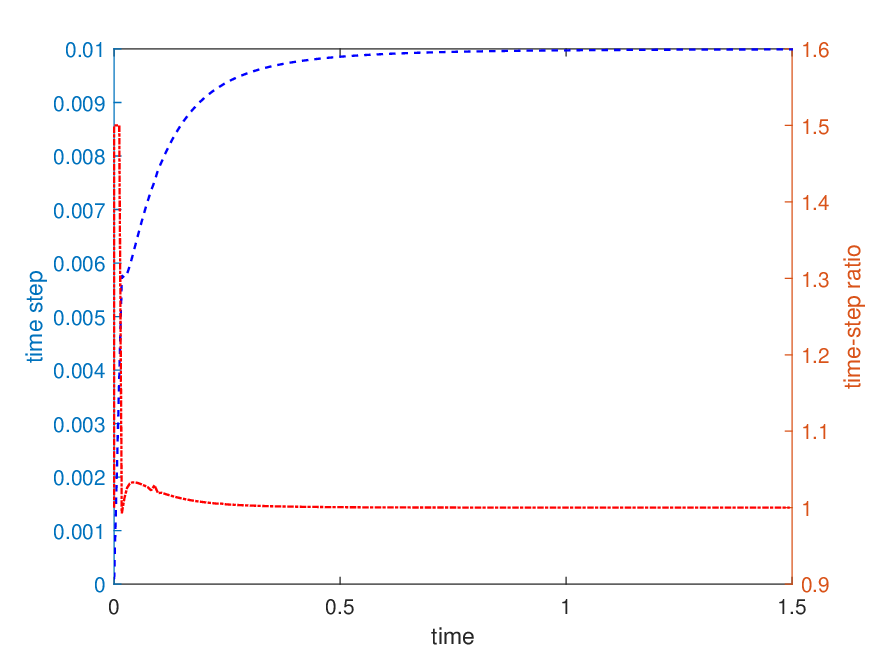}
}
\caption{Numerical results for Keller-Segel equation solved by \eqref{scheme:2d explicit1}-\eqref{scheme:2d explicit2} with \eqref{initial value} using  strategy \eqref{strategy2}, $C=1$, $M_x=M_y=64$, and $\beta=1e-2$, $\tau_{\min}=1e-4$, $\tau_{\max}= 1e-2$, $r_{\text{user}}=1.5$. $m=1$: $\varepsilon=0.1$; $m=2$: $\varepsilon=0.01$.}\label{fig:ks2d,C1,0.2}
\end{figure}
\begin{figure}[!htb]
\centering
\subfigure[Solution, $t=0.15$, $m=1$]{
\includegraphics[width=0.31\textwidth]{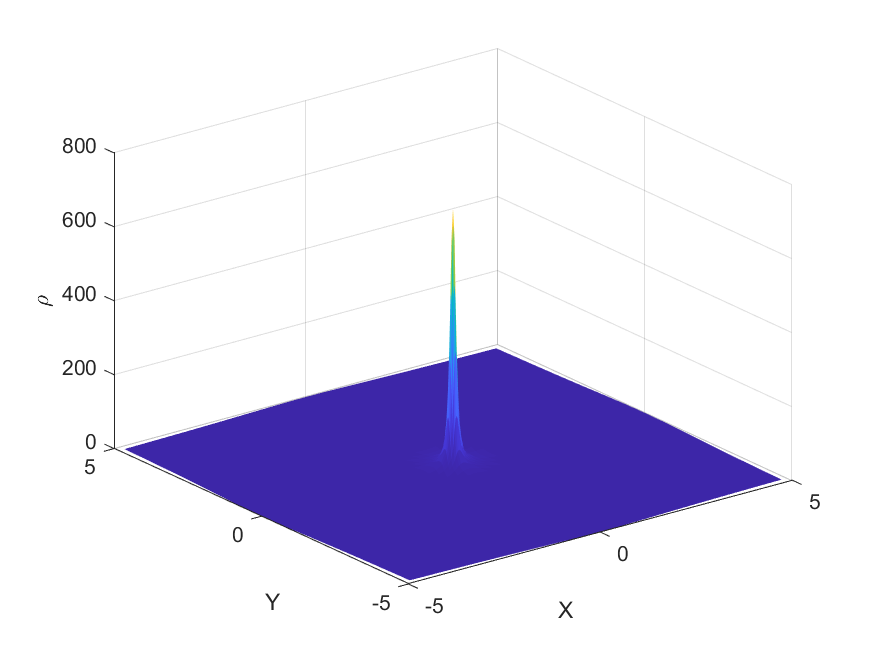}
}
\subfigure[Trajectory, $t=0.15$, $m=1$]{
\includegraphics[width=0.31\textwidth]{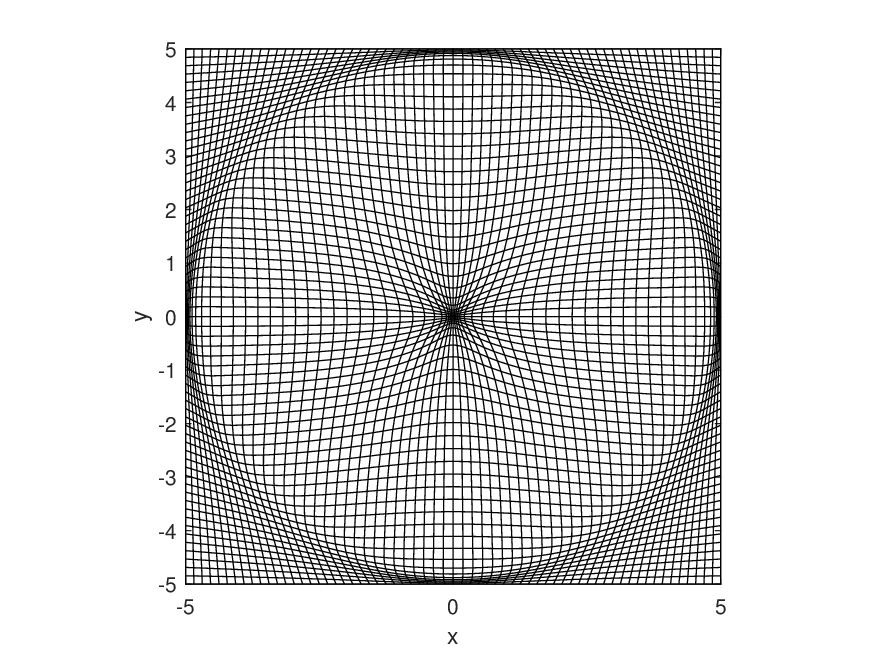}
}
\subfigure[Time step, $m=1$]{
\includegraphics[width=0.31\textwidth]{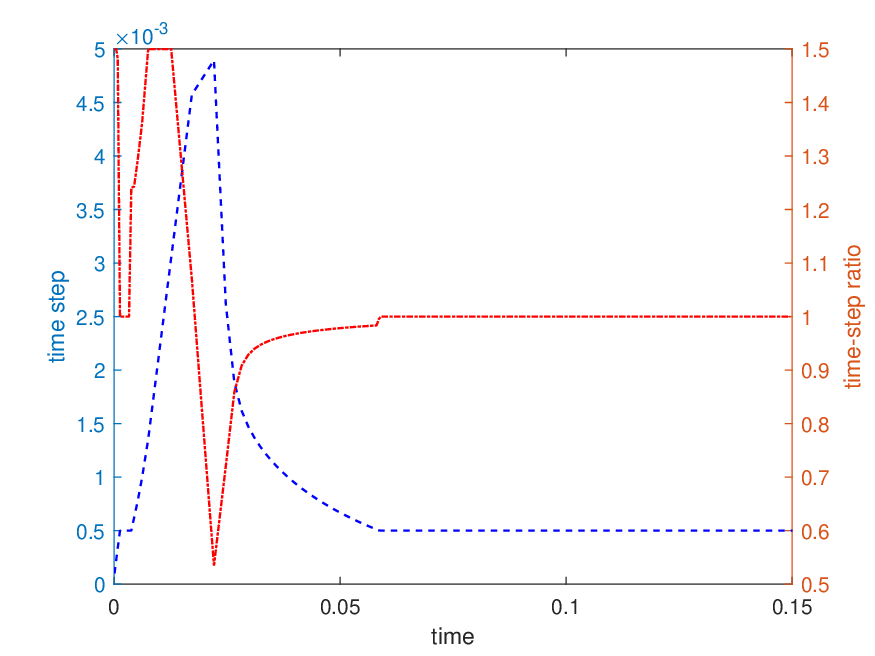}
}
\subfigure[Solution, $t=0.15$, $m=2$]{
\includegraphics[width=0.31\textwidth]{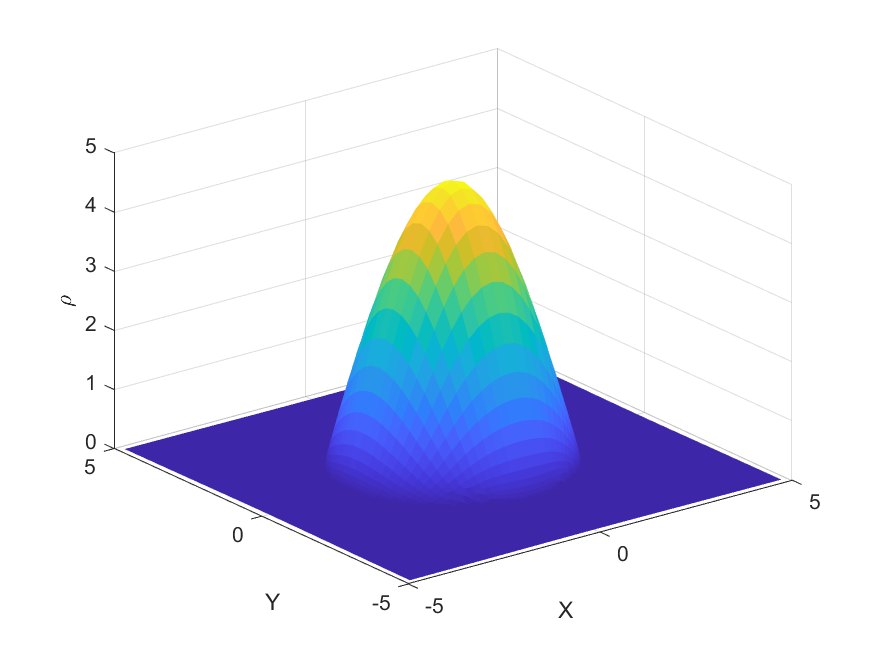}
}
\subfigure[Trajectory, $t=0.15$, $m=2$]{
\includegraphics[width=0.31\textwidth]{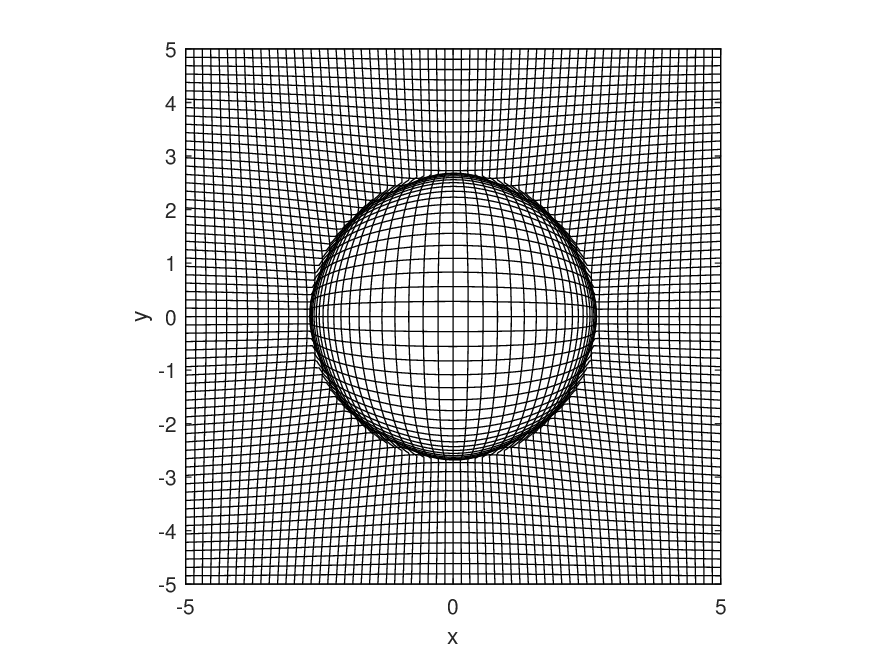}
}
\subfigure[Time step, $m=2$]{
\includegraphics[width=0.31\textwidth]{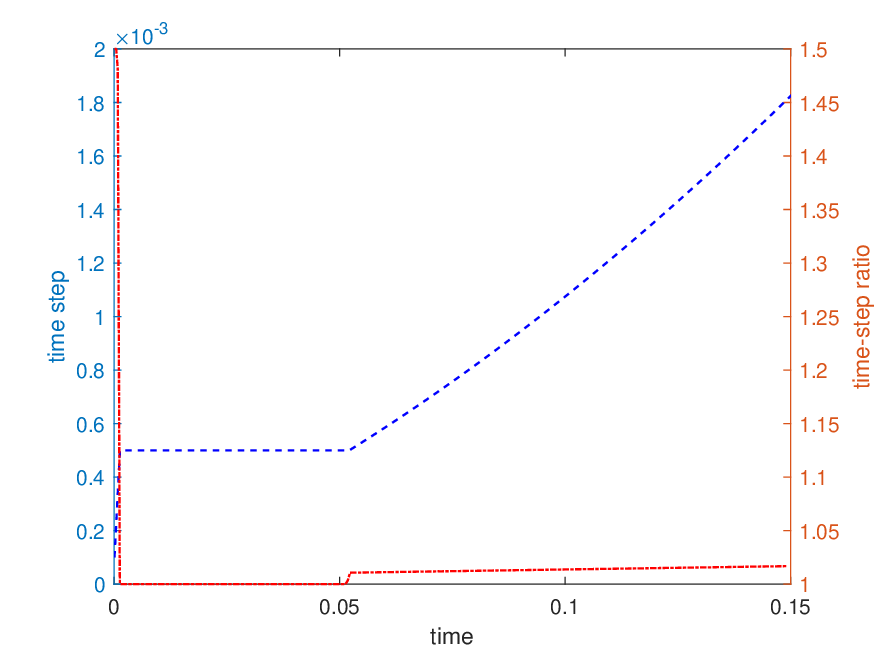}
}
\caption{Numerical results for Keller-Segel solved by \eqref{scheme:2d explicit1}-\eqref{scheme:2d explicit2} with \eqref{initial value} using  strategy \eqref{strategy2}, $C=5\pi$, $M_x=M_y=64$,  $\varepsilon=10$,  $\beta=1e-2$, $\tau_{\min}=5\times1e-4$, $\tau_{\max}=5\times 1e-2$, $r_{\text{user}}=1.5$.}\label{fig:ks2d,C5pi,0.15}
\end{figure}

\section{Concluding remarks}
We developed in this paper two adaptive time-stepping Lagrangian methods for gradient flows based on  different flow dynamic approaches. For the non-conservative models, we designed a modified second-order scheme which naturally preserves the MBP and ensures energy dissipation under a mild condition on the maximum time-step ratio. For the conservative models written as Wasserstein gradient flows, we constructed a mass-conservative method  which is proved to be energy dissipative and positivity-preserving under a mild condition on the maximum time-step ratio. The proposed Lagrangian methods offer several advantages, including the enhanced ability to capture sharp interfaces,  simulate trajectory movements, and address problems involving singularities and free boundaries. 

It should be noted that the condition on the time-step ratio is sufficient for the theoretical analysis, but not necessarily optimal in the practice. An interesting question for further exploration is whether   these schemes can maintain stability under a less restrictive condition on the time-step ratio.


\section*{Acknowledgments} 
Liu is supported by NSFC 123B2015, 
Cheng is supported by NSFC 12301522, 
Chen is supported by NSFC 12471369 and  NSFC 12241101, 
and Shen is supported by NSFC W2431008 and 12371409.

	\bibliographystyle{abbrv}
{\small
	\bibliography{ref.bib}
}

\end{document}